\theoremstyle{plain}
\newtheorem{theorem}{Theorem}[section]
\newtheorem{lemma}[theorem]{Lemma}
\newtheorem{proposition}[theorem]{Proposition}
\newtheorem{corollary}[theorem]{Corollary}
\theoremstyle{definition}
\newtheorem{remark}[theorem]{Remark}
\newtheorem{assumption}{Assumption}
\newcommand{\norm}[1]{{||#1||}}
\newcommand{\dderiv}[4]{{\partial_{#1}^{#3}\partial_{#2}^{#4}}}
\newcommand{\wtilde}[1]{{\widetilde{#1}}}
\def\supp{\mathop{\mathrm{supp}}\nolimits}
\def\Id{\mathop{\mathrm{Id}}\nolimits}
\def\Im{\mathop{\mathrm{Im}}\nolimits}
\def\Sym{\mathop{\mathrm{Sym}}\nolimits}
\def\R{{\mathbb{R}}}
\def\Z{{\mathbb{Z}}}
\def\N{{\mathbb{N}}}
\def\C{{\mathbb{C}}}
\def\S{{\mathcal{S}}}
\def\B{{\mathcal{B}}}
\def\X{{\mathcal{X}}}
\def\Y{{\mathcal{Y}}}
\def\<{{\langle}}
\def\>{{\rangle}}
\def\ep{{\varepsilon}}
\title
{Strichartz estimates for Schr\"odinger equations with variable coefficients and unbounded potentials II. Superquadratic potentials}
\author
{Haruya Mizutani${}^*$}
\date{\empty}
\begin{document}
\maketitle

\begin{abstract}
In this paper we prove local-in-time Strichartz estimates with loss of derivatives for Schr\"odinger equations with variable coefficients and potentials, under the conditions that the geodesic flow is nontrapping and potentials grow polynomially at infinity. This is a generalization to the case with variable coefficients and improvement of the result by Yajima-Zhang \cite{Yajima_Zhang_2}. The proof is based on microlocal techniques including the semiclassical parametrix for a time scale depending on a spatial localization and the Littlewood-Paley type decomposition with respect to both of space and frequency. 
\end{abstract}
\footnotetext{ 
2010 \textit{Mathematics Subject Classification}.
Primary 35Q41; Secondary 35B45.
}
\footnotetext{ 
\textit{Key words and phrases}. 
Schr\"odinger equation, Strichartz estimates, superquadratic potential
}
\footnotetext{${}^*$Department of Mathematics, Graduate School of Science, Osaka University, Toyonaka, Osaka 560-0043, Japan. E-mail: \texttt{haruya@math.sci.osaka-u.ac.jp}. 
}


\section{Introduction}
Let $\wtilde P$ be a Schr\"odinger operator on $\R^d$ with variable coefficients $g^{jk}(x)$ and electromagnetic potentials $V(x)$ and $A(x)=(A_1(x),...,A_d(x))$ of the form:
$$
\wtilde P=\frac12(D_j-A_j(x))g^{jk}(x)(D_k-A_k(x))+V(x),\ D_j:=-i\partial/\partial x_j,\ x\in \R^d.
$$
Here and in the sequel, we use the standard summation convention. Then we impose the following conditions:

\begin{assumption}
\label{assumption_A}
$g^{jk},A_j,V\in C^\infty(\R^d;\R)$, and $(g^{jk}(x))_{j,k}$ is symmetric and uniformly elliptic:
$$
g^{jk}(x)\xi_j\xi_k \ge c|\xi|^2
$$ 
on $\R^{2d}$ with some positive constant $c>0$. Moreover, there exists $m\ge2$ such that, for any $\alpha\in \Z_+^d:=\N^d\cup\{0\}$,
\begin{align}
\label{assumption_A_1}
|\partial_x^\alpha g^{jk}(x)|
+\<x\>^{-m/2}|\partial_x^\alpha A_j(x)|
+\<x\>^{-m}|\partial^\alpha_x V(x)|&\le C_\alpha \<x\>^{-|\alpha|}.
\end{align}
\end{assumption}

When $2\le m<4$ and $g^{jk}=\delta_{jk}$, we also suppose the following: 
\begin{assumption}
\label{assumption_B}
Let $(B_{jk}(x))=(\partial_jA_k(x) - \partial_kA_j(x))$ be the magnetic field associated with $A$. Then there exists $\mu>0$ such that for any $\alpha\in \Z^d_+$ with $|\alpha|\ge1$, 
$$
|\partial_x^\alpha B_{jk}(x)| \le C_\alpha \<x\>^{m/2-2-\mu}.
$$
\end{assumption}

Here $\<x\>$ stands for $\sqrt{1+|x|^2}$. Under Assumption \ref{assumption_A}, the operator $\wtilde P$, with the domain $C_0^\infty(\R^d)$, is symmetric in $L^2(\R^d)$. Let $P$ be any one of its self-adjoint extensions. Then we consider the time-dependent Schr\"odinger equation
\begin{align}
\label{equation_1}
i\partial_t u=Pu,\ t\in\R;\quad u|_{t=0}=u_0\in L^2(\R^d).
\end{align}
The solution is given by 
$
u(t)=e^{-itP}u_0
$ by Stone's theorem, where $e^{-itP}$ denotes a unitary propagator on $L^2(\R^d)$ generated by $P$.

In this paper we are interested in the (local-in-time) \emph{Strichartz estimates} of the forms:\begin{align}
\label{Strichartz_estimates}
\norm{e^{-itP}u_0}_{L^p_TL^q} \le C_T\norm{u_0}_{\B^\gamma},
\end{align}
where $\gamma\ge0$, $L^p_TL^q:=L^p([-T,T];L^q(\R^d))$ and $(p,q)$ satisfies the following \emph{admissible condition}
\begin{align}
\label{admissible_pair}
2\le p,q\le\infty,\quad 
2/p=d(1/2+1/q),\quad
(d,p,q)\neq (2,2,\infty).
\end{align}
$\B^{s}=\B^s(\R^d)$ are weighted Sobolev spaces defined by
\begin{align}
\label{Sobolev_space}
\B^s(\R^d):=\{f\in \S'(\R^d);\ (1-\Delta+|x|^m)^{s/2}f\in L^2(\R^d)\},\quad s\in\R,
\end{align}
with the norm $\norm{f}_{\B^s}:=\norm{(1-\Delta+|x|^m)^{s/2}f}_{L^2(\R^d)}$. These spaces are characterized as follows:
\begin{align*}
\B^s(\R^d)&=\{f\in \S'(\R^d);\ \<x\>^{ms/2}f\in L^2(\R^d),\ \<D\>^sf\in  L^2(\R^d)\},\quad s\ge0,\\
\B^{s}(\R^d)&=\B^{-s}(\R^d)',\quad s<0. 
\end{align*}
Strichartz estimates can be regarded as $L^p$-type smoothing properties of Schr\"odinger equations and have been widely used in the study of nonlinear Schr\"odinger equations (see, \emph{e.g.}, \cite{Cazenave}). 

When $g^{jk}=\delta_{jk}$ and $A\equiv0$, the following has been proved by Yajima-Zhang \cite{Yajima_Zhang_2}:

\begin{theorem}[Theorem 1.3 of \cite{Yajima_Zhang_2}]						
\label{theorem_Yajima_Zhang}
Let $d\ge1$ and $H=-\Delta/2+V$ satisfy Assumption \ref{assumption_A} with some $m\ge2$ and
\begin{align}
\label{assumption_V}
V(x)\ge C\<x\>^m\quad\text{for}\ |x|\ge R, 
\end{align}
with some $R,C>0$. Then, for any $\ep,T>0$ and $(p,q)$ satisfying \eqref{admissible_pair} there exists $C_{T,\ep}>0$ such that
\begin{align}
\label{theorem_Yajima_Zhang_1}
\norm{e^{-itH}u_0}_{L^p_TL^q} \le C_{T,\ep}\norm{\<H\>^{\frac1p\left(\frac12-\frac1m\right)+\ep}u_0}_{L^2}.
\end{align}
\end{theorem}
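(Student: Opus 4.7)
The plan is to combine a spectral Littlewood-Paley decomposition adapted to $H$ with a semiclassical WKB parametrix valid on short time intervals, then partition $[-T,T]$ and sum. The lower bound \eqref{assumption_V} ensures that $H$ is self-adjoint, bounded below, and has compact resolvent. Fix $\varphi\in C_0^\infty((1/2,2))$ with $\sum_{N\ge 1\text{ dyadic}}\varphi(\cdot/N^2)\equiv 1$ on $[1,\infty)$ and decompose $u_0 = \varphi_0(H)u_0 + \sum_N \varphi(H/N^2)u_0$. Standard $L^q$-square function bounds for spectral multipliers of $H$ (of Jensen--Nakamura type, derived from Gaussian heat-kernel estimates for Schr\"odinger operators with positive polynomially growing potentials) reduce the target \eqref{theorem_Yajima_Zhang_1} to the uniform frequency-localized estimate
$$\|e^{-itH}\varphi(H/N^2)u_0\|_{L^p_TL^q}\le C_T\,N^{(m-2)/(mp)+2\ep}\|u_0\|_{L^2},$$
since $\langle H\rangle^s$ acting on the $N$-th dyadic block contributes a factor $\sim N^{2s}$ and $\frac{2}{p}(\frac12-\frac1m)=\frac{m-2}{mp}$.

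At frequency $\lambda\sim N^2$ set the semiclassical parameter $h = N^{-1}$. Classical trajectories live in $\{|\xi|\lesssim N,\ |x|\lesssim N^{2/m}\}$, on which $|\nabla V|\lesssim N^{2(m-1)/m}$, so both $x$ and $\xi$ change by only a small constant fraction of their natural scales on an interval of length $\tau:=\delta\, h^{1-2/m}$. The rescaling $y = h^{2/m}x$, $\eta = h\xi$, together with the corresponding time rescaling, converts $h^2 H$ into a uniformly semiclassical Schr\"odinger operator with an $O(1)$ potential on a fixed phase-space ball. On this rescaled unit time interval I construct a WKB parametrix of the form
$$U_N(t)f(x) = (2\pi h)^{-d}\iint e^{i(S(t,x,\xi)-y\cdot\xi)/h}\,a(t,x,\xi;h)\,f(y)\,dy\,d\xi,$$
whose phase solves the Hamilton--Jacobi equation $\partial_tS+\tfrac12|\nabla_xS|^2+V(x)=0$ with $S|_{t=0}=x\cdot\xi$, and whose amplitude $a$ satisfies the usual transport hierarchy modulo an $O(h^\infty)$ remainder. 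Non-degeneracy of $\partial^2_{x\xi}S$ for $|t|\le\tau$ yields the dispersive bound $\|U_N(t)U_N(s)^*\|_{L^1\to L^\infty}\lesssim |t-s|^{-d/2}$, and Keel--Tao's abstract framework then produces the short-time Strichartz estimate
$$\|e^{-itH}\varphi(H/N^2)u_0\|_{L^p([-\tau,\tau];L^q)}\le C\|u_0\|_{L^2},$$
with constant uniform in $N$.

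To reach the full interval, partition $[-T,T]$ into $\sim TN^{1-2/m}$ windows of length $\tau$, apply the short-time Strichartz estimate on each with initial datum $e^{-it_jH}\varphi(H/N^2)u_0$ (whose $L^2$ norm is preserved by unitarity), and raise to the $p$-th power to obtain the factor $(TN^{1-2/m})^{1/p} = T^{1/p}N^{(m-2)/(mp)}$, exactly the allowed loss. Square-function reassembly across dyadic $N$, together with spectral orthogonality of the blocks in $L^2$, then completes \eqref{theorem_Yajima_Zhang_1}, with $\ep$ absorbing the logarithmic or $\ell^2$-to-$\ell^p$ overhead in the Littlewood--Paley step.

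The principal obstacle is the parametrix construction in the second step. Since $V$ grows polynomially without bound, standard semiclassical machinery on compact phase-space regions does not apply directly; the phase-space rescaling is mandatory, and one must show that the Hamilton--Jacobi solution $S$ exists smoothly on $[-\tau,\tau]$ with symbol estimates on all its derivatives that are uniform in $N$, and that the Lagrangian projection $(x,\xi)\mapsto(\partial_\xi S(t,x,\xi),\xi)$ remains a diffeomorphism throughout. One must then prove that $(i\partial_t - H)U_N$ is $O_{L^2\to L^2}(h^\infty)$ so that Duhamel iteration recovers $e^{-itH}\varphi(H/N^2)$ from $U_N$ without spoiling the dispersive bound on the short interval. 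A secondary technical point is the $L^q$-square function theory for $H$ itself, which requires a Gaussian heat-kernel estimate or a Mikhlin-type spectral multiplier theorem adapted to Schr\"odinger operators with polynomially growing positive potentials; this is classical but not entirely routine in the generality at hand.
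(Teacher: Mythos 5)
Your proposal is essentially the argument of Appendix~A of the paper, where (using the positivity assumption \eqref{assumption_V}) Corollary~\ref{remark_1} is proved: spectral Littlewood--Paley decomposition $v=f_0(H)v+\sum_h f(h^2H)v$, semiclassical approximation of $f(h^2H)$ by an $h$-$\Psi$DO (Proposition~\ref{proposition_FC_1}), the dispersive estimate of Theorem~\ref{theorem_PT_3} on the time scale $|t|\lesssim h^{1-2/m}$, the $TT^*$ argument, and square-function reassembly. Your scaling analysis ($|\xi|\lesssim N$, $|x|\lesssim N^{2/m}$, time window $\tau\sim h^{1-2/m}$, loss $(Th^{-(1-2/m)})^{1/p}=T^{1/p}N^{(m-2)/(mp)}$) is correct and matches what the paper does.

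The one place where the paper does something cleaner than you is the reassembly step. You budget the $\ep$ for an ``$\ell^2$-to-$\ell^p$ overhead,'' but the paper avoids this entirely for $q<\infty$: the Gaussian heat-kernel bound for $e^{-tH}$ together with Zheng's theorem gives the two-sided $L^q$ square-function estimate \eqref{proposition_Appendix_1_1}, and since $p,q\ge 2$ one passes the $\ell^2$ sum of dyadic blocks through $L^p_TL^q$ by Minkowski without any $\ep$. The spectral orthogonality $\sum_h\|f(h^2H)u_0\|_{L^2}^2\approx\|u_0\|_{L^2}^2$ then closes the estimate. This is exactly how Corollary~\ref{remark_1} removes the $\ep$; the statement you were asked to prove is strictly weaker (it carries the $\ep$), so your extra safety margin is unnecessary but certainly harmless. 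For $q=\infty$ ($d=1$, $p=4$) the square-function machinery fails; there your Cauchy--Schwarz trick (trading an $N^\ep$ weight for summability of the dyadic series) is the right fallback and does produce the $\ep$-lossy statement --- this is the only regime where the $\ep$ is genuinely needed.

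One secondary caveat: your Hamilton--Jacobi step needs to account for the fact that $V$ is unbounded on the whole space, so the flow could a~priori escape the region $\{\langle x\rangle\lesssim N^{2/m}\}$ before time $\tau$. The paper handles this via the cutoff $\wtilde V$ in \eqref{proof_lemma_A_0_1} and the a~priori bound of Lemma~\ref{lemma_A_0} (energy conservation shows the flow never leaves the cutoff region); you should include something equivalent before asserting uniform $S(1,g)$ bounds on the phase and amplitude. With that addition the proof is complete and takes the same route as the paper's Appendix~A.
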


Note that, under the conditions \eqref{assumption_A_1} and \eqref{assumption_V}, we have the norm equivalence:
$$
C_1\norm{u_0}_{\B^{\frac1p\left(1-\frac2m\right)+2\ep}}\le\norm{\<H\>^{\frac1p\left(\frac12-\frac1m\right)+\ep}u_0}_{L^2}\le C_2 \norm{u_0}_{\B^{\frac1p\left(1-\frac2m\right)+2\ep}}
$$
with some $C_1,C_2>0$ independent of $u_0$. (see Remark \ref{remark_2}.) 

In this paper we extend this theorem to the variable coefficient case under some geometric conditions on the Hamilton flow generated by the kinetic energy. Furthermore, we remove the additional loss $\<H\>^\ep$ for  the flat case. 

To state our main results, we introduce some notation on the classical system. Let $k(x,\xi)$ be the classical kinetic energy function:
$$
k(x,\xi)=\frac12g^{jl}(x)\xi_j\xi_l,\quad x,\xi\in\R^d.
$$
We denote by $(y_0(t,x,\xi),\eta_0(t,x,\xi))$ the Hamilton equation generated by $k$, \emph{i.e.},  the solution to
$$
\frac{d}{dt}y_0(t)=\frac{\partial k}{\partial \xi}(y_0(t),\eta_0(t)),\ \frac{d}{dt}\eta_0(t)=-\frac{\partial k}{\partial \xi}(y_0(t),\eta_0(t)) 
$$
with the initial condition $(y_0(0,x,\xi),\eta_0(0,x,\xi))=(x,\xi)$. Note that the Hamiltonian vector field $H_{k}=\partial_\xi k\cdot\partial_x-\partial_x k\cdot\partial_\xi$ is complete on $\R^{2d}$ since $(g^{jk})_{j,k}$ is uniformly elliptic. $(y_0(t),\eta_0(t))$ thus exists for all $t \in \R$. To control its asymptotic behavior, we then impose the following conditions:

\begin{assumption}
\label{assumption_C}
(1) (Nontrapping condition) For any $(x,\xi) \in \R^{2d}$ with $\xi \neq 0$, 
$$
|y_0(t,x,\xi)| \to +\infty\quad\text{as}\ t \to \pm \infty.
$$
(2) (Convexity near infinity) There exists $f\in C^\infty(\R^d)$ satisfying $f\ge1$ and $\lim_{|x|\to+\infty}f(x)=+\infty$ such that $\partial_x^\alpha f\in L^\infty(\R^d)$ for any $|\alpha|\ge2$ and that, for some constants $c,R>0$, 
$$
H_k (H_kf)(x,\xi)\ge ck(x,\xi)
$$
for all $x,\xi\in\R^d$ with $f(x)\ge R$. 
\end{assumption}

\begin{remark}
\label{remark_assumption_B}
 It is easy to see that if the quantity
$$
\sup_{|\alpha|\le2}\<x\>^{|\alpha|}|\partial_x^\alpha (g^{jk}(x)-\delta_{jk})|
$$
is sufficiently small, then $\partial_t^2(|y_0(t)|^2)\gtrsim |\xi|^2$ and hence Assumption \ref{assumption_C} (1) holds. Under the same condition, Assumption \ref{assumption_C} (2) also holds with $f(x)=1+|x|^2$. Moreover, if $g^{jk}(x)=(1+a_1\sin(a_2\log r))\delta_{jk}$ for $a_1\in\R,a_2>0$ with $a_1^2(1+a_2^2)<1$ and for $r=|x|\gg1$, then Assumption \ref{assumption_C} (2) holds with $f(r)=(\int_0^r(1+a_1\sin(a_2\log t))^{-1}dt)^2$. For more examples, we refer to \cite[Section 2]{Doi}. 
\end{remark}

We now state main results.

\begin{theorem}					
\label{theorem_1}
Let $d\ge2$, $m\ge2$ and $P$ satisfy Assumptions \ref{assumption_A} and \ref{assumption_C}. Then, for any $T,\ep>0$ and $(p,q)$ satisfying \eqref{admissible_pair}, there exists $C_{T,\ep}>0$ such that for any $u_0\in C_0^\infty(\R^d)$, 
\begin{align}
\label{theorem_1_1}
\norm{e^{-itP}u_0}_{L^p_TL^q} 
\le C_{T,\ep}\norm{u_0}_{\B^{\frac1p\left(1-\frac2m\right)+\ep}}.
\end{align}
\end{theorem}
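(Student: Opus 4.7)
The plan is to prove \eqref{theorem_1_1} by a semiclassical Littlewood--Paley decomposition adapted to the scale $\B^s$. Via the functional calculus of $P$, decompose $1=\varphi_0(P)+\sum_h \varphi(h^2P)$ along a dyadic sequence $h\in 2^{-\N}$, with $\varphi\in C_0^\infty((0,\infty))$. Since the Weyl symbol of $h^2P$ is essentially $h^2(k(x,\xi)+V(x))$, the localization $\varphi(h^2P)$ concentrates on the semiclassical energy shell $\{k+V\sim h^{-2}\}$, which forces $|\xi|\lesssim h^{-1}$ and $|x|\lesssim h^{-2/m}$. Combined with the norm equivalence in Remark \ref{remark_2} and orthogonality of the spectral pieces, the theorem reduces to the uniform semiclassical Strichartz bound
\begin{align*}
\norm{e^{-itP}\varphi(h^2P)u_0}_{L^p_TL^q}\le C_{T,\ep}\,h^{-\frac{1}{p}\left(1-\frac{2}{m}\right)-\ep}\norm{u_0}_{L^2}
\end{align*}
for all $h\in(0,1]$, which after summation in $h$ by a Littlewood--Paley square function yields \eqref{theorem_1_1}.

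To handle a single semiclassical block I would further split it spatially using a dyadic partition $\chi_R$ with $|x|\sim R$ and $1\le R\lesssim h^{-2/m}$, the larger $R$ producing only $O(h^\infty)$ tails since $\varphi(h^2P)\chi_R$ is then negligible by standard semiclassical non-stationary phase arguments applied to the resolvent expansion. For each pair $(h,R)$, I would construct an Isozaki--Kitada / WKB type Fourier integral parametrix for $e^{-itP}\varphi(h^2P)\chi_R$ valid on the Ehrenfest time scale $\tau_{h,R}:=c\,hR^{1-m/2}$, which is the time required for the Hamilton flow of $k+V$ starting from $(x,\xi)$ with $|x|\sim R$, $|\xi|\sim h^{-1}$ to move a distance $\sim R$. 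Assumption \ref{assumption_C} (nontrapping plus convexity at infinity) ensures global existence and smooth symbol estimates for the associated Hamilton--Jacobi and transport equations on $[-\tau_{h,R},\tau_{h,R}]$ uniformly in $(h,R)$, while Assumption \ref{assumption_B} is what allows one to absorb the first-order magnetic contribution into the principal symbol when $m<4$. Stationary phase in the resulting oscillatory representation gives the dispersive estimate $\norm{e^{-itP}\varphi(h^2P)\chi_R}_{L^1\to L^\infty}\lesssim |t|^{-d/2}$ on $|t|\le\tau_{h,R}$, and the Keel--Tao $TT^*$ argument upgrades this to semiclassical Strichartz without loss on this short interval.

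Partitioning $[-T,T]$ into $O(T/\tau_{h,R})$ intervals of length $\tau_{h,R}$ and summing the per-block Strichartz bounds in $\ell^p$ costs a factor $\tau_{h,R}^{-1/p}$. Its supremum over $1\le R\lesssim h^{-2/m}$ is attained at $R\sim h^{-2/m}$ and equals $h^{-(1-2/m)/p}$, which is exactly the stated loss; the logarithmic sum over dyadic $R$ is absorbed into the $h^{-\ep}$ factor. The main obstacle I foresee is the construction of the parametrix on the extended two-parameter time scale $\tau_{h,R}$ in the variable-coefficient setting: unlike the flat case of Yajima--Zhang where explicit integral formulas are available, here one must solve the Hamilton--Jacobi equation for the full classical Hamiltonian while maintaining semiclassical symbol estimates uniform in $(h,R)$, simultaneously controlling the underlying geodesic flow (via Assumption \ref{assumption_C}) and the superquadratic potential contribution. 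A secondary but delicate point is the almost-orthogonality of the spatial pieces $\chi_R\,\varphi(h^2P)$ needed to close the sum over dyadic $R$ without introducing extra loss, which should follow from a standard stationary/non-stationary phase analysis of the composition.
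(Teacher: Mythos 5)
Your proposal has two serious gaps, and the approach as written does not prove Theorem~\ref{theorem_1}.

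First, the opening step of decomposing $1=\varphi_0(P)+\sum_h\varphi(h^2P)$ via the functional calculus of $P$ fails under the hypotheses of Theorem~\ref{theorem_1}. Assumption~\ref{assumption_A} only requires $|\partial_x^\alpha V|\le C_\alpha\<x\>^{m-|\alpha|}$; the growth condition \eqref{assumption_V} is \emph{not} assumed, so $V$ may vanish, be negative, or be $\sim -\<x\>^m$. Then the level set $\{k+V\sim h^{-2}\}$ is not bounded in $x$ (for $V\equiv 0$ it is simply $\{|\xi|\sim h^{-1}\}$), so $\varphi(h^2P)$ does not localize to $|x|\lesssim h^{-2/m}$, and the claim that $\varphi(h^2P)\chi_R$ has $O(h^\infty)$ tails for large $R$ is false. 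The norm equivalence of Remark~\ref{remark_2} that you invoke also presupposes \eqref{assumption_V}. This is exactly the obstruction the paper points out around \eqref{strategy_0}; the actual proof replaces the spectral multiplier decomposition by a \emph{phase-space} partition $\psi_0+\psi_1=1$ into a high-frequency region $\{\<x\>^m\lesssim|\xi|^2\}$ and a low-frequency region $\{\<x\>^m\gtrsim|\xi|^2\}$, followed by dyadic decomposition in $\xi$ on the first piece and in $x$ on the second (Proposition~\ref{proposition_LP_1}). The functional calculus route you sketch is essentially the one the appendix uses to prove Corollary~\ref{remark_1} \emph{under the additional hypothesis} \eqref{assumption_V}, but it cannot yield Theorem~\ref{theorem_1} as stated.

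Second, the scheme of partitioning $[-T,T]$ into $O(T/\tau_{h,R})$ intervals and summing in $\ell^p$ does not give the claimed loss, and it omits the key ingredient of the proof. The WKB parametrix is valid for $|t|\lesssim hR$, not $hR^{1-m/2}$: on $\Gamma^h(L)$ the rescaled Hamiltonian $p^h=h^2p(x,\xi/h)$ has speed $|\partial_\xi p^h|\lesssim 1$, so moving distance $\sim R$ takes rescaled time $\sim R$, i.e.~physical time $\sim hR$ (Theorem~\ref{theorem_PT_1}). With $\tau_{h,R}\sim hR$, the time-splitting cost $\tau_{h,R}^{-1/p}$ is \emph{largest} at $R\sim 1$, giving $h^{-1/p}$, which only recovers the weaker trapping bound \eqref{Strichartz_trapping}. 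To obtain $h^{-(1-2/m)/p-\ep}$ one must not do a brute-force interval sum but instead use the inhomogeneous semiclassical Strichartz estimate of Proposition~\ref{proposition_PT_2}, which produces a commutator error of size $(hR)^{-1/2}\norm{\chi^h_R e^{-itP}u_0}_{L^2_TL^2}$, and then trade the $\<x\>^{-1/2}\sim R^{-1/2}$ weight against the Robbiano--Zuily local smoothing estimate (Proposition~\ref{proposition_LS_3}). Assumption~\ref{assumption_C} enters precisely there, and the $\ep$-loss in \eqref{theorem_1_1} is due to the failure of local smoothing at $\nu=0$, \emph{not} to the logarithmic sum over dyadic $R$ (that sum is closed with almost orthogonality at no $\ep$-cost). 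Your proposal never invokes local smoothing, so it cannot improve past $h^{-1/p}$. Finally, Assumption~\ref{assumption_B} is not a hypothesis of Theorem~\ref{theorem_1}; it is only used in the flat case (Theorem~\ref{theorem_2}) to extend the parametrix to the longer scale $h^{1-2/m}$, and mentioning it here is a red herring.
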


For the flat case, we can remove the additional $\ep$-loss as follows.
\begin{theorem}
\label{theorem_2}
Let $d\ge3$, $m\ge2$ and $H=\frac12(D-A(x))^2+V(x)$ satisfy Assumption \ref{assumption_A}. When $2\le m<4$ we also suppose Assumption \ref{assumption_B}. 
Then, for any $T>0$, $(p,q)$ satisfying \eqref{admissible_pair} and for any $u_0\in C_0^\infty(\R^d)$
\begin{align}
\label{theorem_1_2}
\norm{e^{-itH}u_0}_{L^p_TL^q} 
\le C_{T}\norm{u_0}_{\B^{\frac1p\left(1-\frac2m\right)}}.
\end{align}
\end{theorem}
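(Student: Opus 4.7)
The plan is to follow the microlocal strategy used for Theorem~\ref{theorem_1}, with two refinements made possible by the flatness $g^{jk}=\delta^{jk}$: an explicit Isozaki--Kitada / WKB parametrix, constructed globally on each semiclassical time scale, and a more careful summation of the dyadic pieces that avoids the logarithmic/polynomial loss responsible for the $\varepsilon$ in Theorem~\ref{theorem_1}.

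First I would fix a double dyadic Littlewood--Paley decomposition $u_0=\sum_{h,\mu}\psi_h(D)\chi_\mu(x)u_0$, where $\psi_h(D)$ localizes the frequency at scale $h^{-1}$ (semiclassical parameter $h\to 0$) and $\chi_\mu(x)$ localizes in the shell $|x|\sim\mu$. The classical energy relation $|\xi|^2/2+V(x)\sim h^{-2}$ forces the interesting regime to be the diagonal $\mu\sim h^{-2/m}$: off-diagonal pieces are either elliptic (small potential effect) or live in the forbidden region (Agmon decay), and in both cases produce only remainders controlled by commutator/parametrix errors.

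On the diagonal piece, a classical trajectory of kinetic energy $\sim h^{-2}$ traverses the ball $|x|\sim\mu$ in physical time $\tau_h\sim\mu h\sim h^{1-2/m}$. After semiclassical rescaling I would construct an Isozaki--Kitada Fourier integral parametrix for $e^{-itH}\psi_h(D)\chi_\mu(x)$ valid on $|t|\lesssim\tau_h$; the flatness of the metric makes this a global, quantitative WKB construction. Stationary phase then yields the sharp dispersive bound $\|e^{-itH}\psi_h\chi_\mu\|_{L^1\to L^\infty}\lesssim (h|t|)^{-d/2}$ on this short time scale, and the Keel--Tao theorem produces a \emph{loss-free} semiclassical Strichartz estimate on $[-\tau_h,\tau_h]$. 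Chopping $[-T,T]$ into $\sim T/\tau_h$ intervals and summing in $L^p_t$ produces exactly the factor $\tau_h^{-1/p}=h^{-(1-2/m)/p}$, which matches the exponent $\frac{1}{p}(1-\frac{2}{m})$ in~\eqref{theorem_1_2}.

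The final step---summing over the dyadic parameter $h$ without $\varepsilon$-loss---is the technical improvement over Theorem~\ref{theorem_1}. The idea is to combine the Littlewood--Paley square function estimate in $L^q$ with a Christ--Kiselev type almost orthogonality in the $L^p_t$ variable applied to the short-time propagators. The restriction $d\ge 3$ enters exactly here: it is what makes the Keel--Tao endpoint $(p,q)=(2,2d/(d-2))$ fall strictly inside the Banach range $1<q<\infty$ where the square function estimate holds, and it avoids the forbidden endpoint $(2,\infty)$ of Keel--Tao in dimension two. The two decompositions in $h$ and $\mu$ are almost orthogonal in $L^2$ by Plancherel, and interpolation with $L^\infty$ through the dispersive estimate transfers this orthogonality to the desired range.

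The main obstacle is the range $2\le m<4$, in which the magnetic vector potential $A$ is allowed to grow like $\langle x\rangle^{m/2}$; on the diagonal scale $|x|\sim\mu\sim h^{-2/m}$ this is of order $h^{-1}$, comparable with the momentum $\xi$, so $A$ cannot be absorbed as a subprincipal perturbation of the kinetic symbol. Assumption~\ref{assumption_B} on the decay of the magnetic field $B=dA$ is precisely the condition that allows a smooth gauge change which, once inserted into the phase of the parametrix, removes the leading part of $A$ and leaves a remainder of symbol class $\langle x\rangle^{m/2-2-\mu}$; this remainder is then genuinely subprincipal in the semiclassical calculus. Carrying out this gauge reduction uniformly with respect to the double dyadic decomposition, and keeping track of its effect on the FIO parametrix, is the delicate technical point.
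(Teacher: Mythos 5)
Your overall framework---semiclassical phase-space decomposition, construction of a WKB/FIO parametrix on the time scale $\tau_h\sim h^{1-2/m}$ (coming from the flat-case flow, cf. Lemma~\ref{lemma_A_1} and Theorem~\ref{theorem_WKB_2}), and summation of the resulting short-time Strichartz estimates to produce the exponent $\frac1p(1-\frac2m)$---matches the skeleton of the paper's argument. But there is a genuine gap in the way you propose to handle the magnetic potential when $2\le m<4$. You claim that Assumption~\ref{assumption_B} furnishes a gauge change that ``removes the leading part of $A$ and leaves a remainder of symbol class $\langle x\rangle^{m/2-2-\mu}$.'' This cannot work. Under Assumption~\ref{assumption_A}, $B=dA$ is of size $\langle x\rangle^{m/2-1}$, which is gauge-invariant; any gauge-transformed $\widetilde A$ still satisfies $d\widetilde A=B$ and therefore cannot decay faster than $\langle x\rangle^{m/2-1}$, let alone $\langle x\rangle^{m/2-2-\mu}$. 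Assumption~\ref{assumption_B} controls the \emph{derivatives} $\partial^\alpha B$ for $|\alpha|\ge1$, not $B$ itself. Even localizing to a ball of radius $R\sim h^{-2/m}$ and using the Poincar\'e gauge gives $|A|\lesssim R\cdot\sup|B|\sim R^{m/2}\sim h^{-1}$, i.e.\ no gain at all. What the paper actually does in this regime (Lemma~\ref{lemma_A_2}) is quite different: it keeps $A$ in the Lagrange equations $\dot X=v$, $\dot v=hB(X)v-h^2\nabla V(X)$ and proves the \emph{integrated along the flow} estimate $\int_{I_h}h|\partial^\alpha B(X(t))|\,|v(t)|\,dt\lesssim h^{2/m}$. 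For $2\le m<4$ this requires a case distinction, with the crucial case handled by exploiting convexity of $t\mapsto|X(t)|^2$ to get the decay $1+|X(t)|^2\gtrsim 1+(t-r)^2|\eta|^2$ along the trajectory and then using the $\langle x\rangle^{m/2-2-\mu}$ decay of $\partial^\alpha B$. This dynamical estimate cannot be replaced by a pointwise gauge reduction.

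Two further points. First, your claim that the off-diagonal pieces of the double dyadic decomposition (frequency $\sim h^{-1}$, $|x|\sim\mu$ with $\mu\not\sim h^{-2/m}$) are negligible by ellipticity or Agmon decay is false in the high-frequency regime $|x|\ll h^{-2/m}$: there the kinetic energy dominates and nothing is forbidden, yet these pieces carry a nontrivial contribution. The paper therefore takes $\Psi_0^h$ to cover the full region $\langle x\rangle\lesssim h^{-2/m}$, $|\xi|\sim 1$, and it is precisely in the flat case that it is \emph{not} necessary to further decompose this dyadically in $x$ (which was needed in Theorem~\ref{theorem_1}), because the dispersive estimate and the parametrix are uniformly valid up to time $\delta h^{1-2/m}$. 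Second, the paper does not use Christ--Kiselev or a square-function argument to sum over $h$; the essential inputs are the commutator improvement $\Sym([H,\Psi_0^h(x,hD)])\in S(h^{-1+2/m},g)$ (equation~\eqref{PT2_1}, which is strictly better than the generic $S(h^{-1}\langle x\rangle^{-1},g)$), almost orthogonality of the cutoffs in $L^2$, and the energy estimate $\|E_s e^{-itP}u_0\|_{L^2}\le Ce^{C|t|}\|E_s u_0\|_{L^2}$ of Lemma~\ref{lemma_LS_2}, which replaces the local-smoothing bound (and thereby removes the $\varepsilon$-loss, since no $\langle x\rangle^{-1/2-\varepsilon}$ weight is needed). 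Your proposal would still need to produce an analogue of that commutator gain and explain how the inhomogeneous error term in Proposition~\ref{proposition_PT_2}/\ref{proposition_PT_4} is controlled, neither of which is addressed by the square-function/Christ--Kiselev route as sketched.
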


\begin{corollary}
\label{remark_1}
In Theorem \ref{theorem_2}, if we, in addition, assume \eqref{assumption_V} and $q<\infty$ then \eqref{theorem_1_2} holds for any dimension.
\end{corollary}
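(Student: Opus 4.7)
The plan is to revisit the proof of Theorem~\ref{theorem_2} and isolate the step that forces $d\ge 3$, then to check that under the extra hypotheses of the corollary this step can be replaced by a variant valid in every dimension. Recall that the strategy behind Theorem~\ref{theorem_2} is to localize $u_0$ dyadically in both frequency and space, approximate $e^{-itP}$ on each piece by a short-time semiclassical parametrix, and reduce the resulting bound to a Strichartz estimate for the free propagator $e^{it\Delta/2}$ via this parametrix. The dimensional restriction enters exactly at this last reduction, where the Keel--Tao endpoint Strichartz estimate for $e^{it\Delta/2}$ is invoked; the endpoint pair $(p,q)=(2,2d/(d-2))$ is meaningful and admissible only for $d\ge 3$.

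Under the additional hypothesis $q<\infty$, every admissible pair $(p,q)$ to be treated in dimensions $d=1,2$ is strictly non-endpoint: in $d=2$ the pair $(2,\infty)$ is already excluded by the admissibility condition \eqref{admissible_pair}, while in $d=1$ the only admissible pair with $q=\infty$, namely $(4,\infty)$, is excluded by $q<\infty$. For such non-endpoint $(p,q)$, the Strichartz estimate for $e^{it\Delta/2}$ follows from the standard dispersive bound $\norm{e^{it\Delta/2}}_{L^1\to L^\infty}\le C|t|^{-d/2}$ together with the $TT^*$ argument and the Hardy--Littlewood--Sobolev inequality, and holds in every dimension $d\ge 1$. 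Substituting this non-endpoint Strichartz estimate in place of Keel--Tao at the corresponding step of the proof of Theorem~\ref{theorem_2} thus removes the restriction $d\ge 3$.

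The additional coercivity assumption $V(x)\ge C\<x\>^m$ for $|x|\ge R$ enters as in Theorem~\ref{theorem_Yajima_Zhang}: it gives the norm equivalence $\norm{\<H\>^{s/2}u}_{L^2}\approx\norm{u}_{\B^s}$ recalled after Theorem~\ref{theorem_Yajima_Zhang} (see Remark~\ref{remark_2}). This is what lets the spectral Littlewood--Paley decomposition in $H$ used throughout the proof of Theorem~\ref{theorem_2} be read off as a clean $\B^s$-loss on the initial datum, with the correct exponent $\frac1p(1-\frac2m)$; without it one would be forced back to the $\<H\>^{\cdot}$ formulation of Theorem~\ref{theorem_Yajima_Zhang} and lose control of the weighted part of $\B^s$.

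The only real issue to check is that the short-time semiclassical parametrix and the associated dispersive bound are derived in a dimension-free way, so that the substitution of non-endpoint Strichartz for the Keel--Tao endpoint is the only modification needed. This is indeed the case, since the parametrix is built purely microlocally. I therefore expect no difficulty beyond careful bookkeeping; the one mild subtlety is that the non-endpoint Strichartz constant blows up as the admissible pair approaches the forbidden endpoint in $d=2$, but for each fixed $(p,q)$ with $q<\infty$ it is finite and is absorbed into $C_T$.
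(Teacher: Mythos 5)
Your proposal identifies the wrong step as the obstruction. The point where the paper's proof of Theorem~\ref{theorem_2} actually requires $d\ge 3$ is not an invocation of the Keel--Tao endpoint for the free propagator inside the parametrix argument: Proposition~\ref{proposition_PT_4} is already stated and proved for every admissible $(p,q)$. The real issue is that the inhomogeneous error term in Propositions~\ref{proposition_PT_2}--\ref{proposition_PT_4} is an $L^2_TL^2$ quantity with weight $h^{-1/2+1/m}$ that does \emph{not} depend on $(p,q)$; after the dyadic sum it produces a loss $E_{1/2-1/m}$ (i.e.\ $\B^{1-2/m}$) for every admissible pair, which is the correct exponent only when $p=2$. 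The proof then recovers the asserted loss $\B^{\frac1p(1-\frac2m)}$ by interpolating the $p=2$ estimate with the trivial $L^\infty_TL^2$ bound, and for $d<3$ there simply is no admissible pair with $p=2$ and $q<\infty$ to start from. Replacing ``Keel--Tao'' by a non-endpoint free Strichartz estimate does nothing to fix this: you would still be left with the interpolation step, which has nowhere to anchor. You would also lose the factor $2/p$ by which the loss must shrink.

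What actually removes the dimension restriction in the paper is a different decomposition, not a different Strichartz input. Under the coercivity assumption \eqref{assumption_V} one has $V\ge -C$, so the heat kernel of $e^{-tH}$ satisfies Gaussian upper bounds and Zheng's theorem gives the \emph{spectral} Littlewood--Paley square-function estimate of Proposition~\ref{proposition_Appendix_1}. The crucial gain is that the spectral projections $f(h^2H)$ commute with $e^{-itH}$, so there is no commutator error at all and no inhomogeneous $L^2_TL^2$ term: the short-time dispersive estimate (Theorem~\ref{theorem_PT_3}, transferred to $F(h^2H)$ via Proposition~\ref{proposition_FC_1}) plus the $TT^*$ argument on time intervals of length $\sim h^{1-2/m}$ directly produce
$\norm{F(h^2H)e^{-itH}u_0}_{L^p_TL^q}\lesssim_T h^{-(1-2/m)/p}\norm{u_0}_{L^2}$
for every admissible $(p,q)$ with $q<\infty$, in every dimension, with the correct $p$-dependent exponent. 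Summing these with the square-function estimate and the norm equivalence $\norm{\<H\>^{s/2}u_0}_{L^2}\approx\norm{u_0}_{\B^{s}}$ gives the corollary. So \eqref{assumption_V} is used for much more than the norm equivalence you mention: it is what makes the Gaussian heat-kernel bound, hence Zheng's Littlewood--Paley theorem, and the elliptic parametrix for $(h^2H-z)^{-1}$ in Proposition~\ref{proposition_FC_1} available. Your plan to re-run the proof of Theorem~\ref{theorem_2} with a non-endpoint Strichartz estimate would not reach the stated exponent, and a genuinely different decomposition is needed.
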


Several remarks are in order: 

\begin{remark}
It is known that the condition $V\ge -C\<x\>^2$ with some  $C>0$ is almost optimal for the essential self-adjointness of $\wtilde P$. More precisely, it is known (see, \emph{e.g.}, \cite[Chaper 13, Section 6]{Dunford_Schwartz}) that $-\Delta-|x|^4$ is not essentially self-adjoint on $C_0^\infty(\R)$. This is due to the fact that the corresponding classical trajectories blow up in a finite time in the sense that it can reach an infinite speed. However, it was shown in \cite{Iwatsuka} that $\wtilde P$ can be essentially self-adjoint even if $V$ blows up in the negative direction such as $V\le -C\<x\>^m$ with $m>2$, if strongly divergent magnetic fields are present near infinity. More precisely, we define
$$
|B(x)|:=\Big(\sum_{j<k}B_{jk}(x)^2\Big)^{1/2}.
$$
Then if we assume, in addition Assumption \ref{assumption_A}, that $V(x)+|B(x)|\ge -C\<x\>^2$ with some $C>0$ then $\wtilde P$ is essentially self-adjoint on $C_0^\infty(\R^d)$. In this case, since $C_0^\infty(\R^d)$ is a dense subset of $\B^s(\R^d)$ if $s\ge0$, by the density argument, Theorem \ref{theorem_1} (resp. Theorem \ref{theorem_2}) holds for any $u_0 \in\B^s(\R^d)$ with $s=\frac1p\left(1-\frac2m\right)+\ep$ (resp. $s=\frac1p\left(1-\frac2m\right)$). 
\end{remark}

\begin{remark}
\label{remark_2}
Suppose that $V$ satisfies \eqref{assumption_V}. Then we can assume $P\ge1$ without loss of generality and $P$ hence is uniformly elliptic in the sense that
$$
p(x,\xi)\approx |\xi|^2+\<x\>^m,
$$
where $p$ is the full hamiltonian associated to $P$ (modulo lower order term), \emph{i.e.}, 
$$
p(x,\xi)=\frac12g^{jk}(x)(\xi_j-A_j(x))(\xi_k-A_k(x))+V(x).
$$
By the standard parametrix construction for $P$, we see that, for any $1<q<\infty$ and $s\ge0$
\begin{align*}
\norm{P^{s/2}v}_{L^q}+\norm{v}_{L^q}
\approx \norm{\<D\>^sv}_{L^q}+\norm{\<x\>^{ms/2}v}_{L^q},\quad v\in C_0^\infty(\R^d),
\end{align*}
(see, \emph{e.g.}, \cite[Lemma 2.4]{Yajima_Zhang_2}). In particular, $\norm{\<P\>^{s/2}v}_{L^2}\approx \norm{v}_{\B^s}$. 
Therefore, our result is a generalization and improvement of Theorem \ref{theorem_Yajima_Zhang}. 
\end{remark}

\begin{remark}
 If $m=2$ then Theorem \ref{theorem_2} implies Strichartz estimates without loss of derivatives:
$$
\norm{e^{-itH}u_0}_{L^p_TL^q}\le C_T\norm{u_0}_{L^2(\R^d)},
$$
which have already been proved by Yajima \cite{Yajima2} (see also Fujiwara \cite{Fujiwara} for the case $A\equiv0$). We also note that, for the Harmonic oscillator $H=-\frac12\Delta+\frac{|x|^2}{2}$, the distribution kernel of the propagator $e^{-itH}$ is given by the Mehlerfs formula:
$$
E(t,x,y)=\frac{1}{i^m(2\pi i\sin t)^{n/2})}\exp\left(-\frac{(|x|^2+|y|^2)\cos t-2xy}{2i\sin t}\right)
$$
for $m\pi<t<(m+1)\pi$ and $m\in \Z$, and thus satisfies the dispersive estimate:
\begin{align}
\label{dispersive}
\norm{e^{-itH}u_0}_{L^\infty} \le C|t|^{-d/2}\norm{u_0}_{L^1}
\end{align}
if $0<|t|<\pi$. 
Then Strichartz estimates are direct consequences of this inequality and the $TT^*$-argument due to \cite{Ginibre_Velo} (see \cite{Keel_Tao} for the endpoint). 
\end{remark}

\begin{remark}
The additional $\ep$-loss in \eqref{theorem_1_1} is only due to the use of the following local smoothing effect under Assumption \ref{assumption_C}:
$$
\norm{\<x\>^{-1/2-\ep}E_{1/m}e^{-itP}u_0}_{L^2_TL^2} \le C_{T,\ep}\norm{u_0}_{L^2},\quad\ep>0,
$$
where $E_{s}$ is a pseudodifferential operator with the symbol $(k_A(x,\xi)+\<x\>^m)^{s/2}$, where $k_A(x,\xi)=\frac12(\xi_j-A_j(x))g^{jk}(x)(\xi_k-A_k(x))$. It is well known that this estimate does not holds when $\ep=0$ even for $P=-\frac12\Delta+\<x\>^m$ (see \cite{RZ}). Also note that we do not use such a local smoothing effect to prove Theorem \ref{theorem_2}. 
\end{remark}

\begin{remark}
\label{remark_trapping}
Let $d\ge3$. Without Assumption \ref{assumption_C} we only have the following estimates
\begin{align}
\label{Strichartz_trapping}
\norm{e^{-itP}u_0}_{L^p_TL^q}\le C_T\norm{u_0}_{\B^{\frac1p}},
\end{align}
which are weaker than \eqref{theorem_1_1} and basically same as that for the case on the Laplace-Beltrami operator on compact manifolds $(M,g)$ without boundary (cf. \cite{BGT}) in which it was shown that $e^{it\Delta_g}$ obeys \eqref{Strichartz_trapping} with $\norm{u_0}_{\B^{\frac1p}}$ replaced by $\norm{(1-\Delta_g)^{1/(2p)}u_0}_{L^2}$. We will give the proof of \eqref{Strichartz_trapping} in the end of the proof of Theorem \ref{theorem_1}. 
\end{remark}

Global-in-time Strichartz estimates, that is \eqref{Strichartz_estimates} with $T=\infty$ and $\gamma=0$, for the free propagator $e^{it\Delta/2}$ were first proved by Strichartz \cite{Strichartz} for a restricted pair of $(p,q)$ with $p=q=2(d+2)/d$, and have been generalized for $(p,q)$ satisfying \eqref{admissible_pair} and $p\neq2$ by \cite{Ginibre_Velo}. The endpoint estimate $(p,q)=(2,2d/(d-2))$ for $d\ge3$ was obtained by \cite{Keel_Tao}. 

Furthermore, Strichartz estimates for Schr\"odinger equations have been extensively studied by many authors for both of potential and metric perturbation cases, separately. 

For Schr\"odinger operators with potentials satisfying Assumption \ref{assumption_A} with $m\le2$, it was shown by  \cite{Fujiwara,Yajima2} that $e^{-itH}\varphi$ satisfies (short-time) dispersive estimate \eqref{dispersive} for sufficiently small $t\neq0$ and hence local-in-time Strichartz estimates without loss of derivatives. For the case with singular electric potentials, we refer to \cite{Yajima1}. We mention that global-in-time dispersive and Strichartz estimates for scattering states have been also studied under suitable decaying conditions on potentials and assumptions for zero energy; see \cite{JSS,Yajima5,Schlag1} for dispersive estimates and \cite{Rodnianski_Schlag,BPSS,D'AFVV} for Strichartz estimates, and reference therein. We also mention that there is no result on sharp global-in-time dispersive estimates for (generic) magnetic Schr\"odinger operators, though \cite{FFFM} has recently proved dispersive estimates for the Aharonov-Bohm effect in $\R^2$. 

On the other hand, it is important to study the influence of underlying classical dynamics on the behavior of solutions to linear and nonlinear partial differential equations. From this geometric viewpoint, local-in-time Strichartz estimates for metric perturbations (or, more generally, on manifolds) have recently been investigated by many authors under several conditions on the geometry; see, \emph{e.g.}, \cite{Staffilani_Tataru,BGT,Robbiano_Zuily,HTW,Bouclet_Tzvetkov_1,Bouclet,BGH,Mizutani1} and reference therein. We mention that there are also several works on global-in-time Strichartz estimates in the case of long-range perturbations of the flat Laplacian on $\R^d$ (\cite{Bouclet_Tzvetkov_2,Tataru,MMT}). 

The main purpose of  this paper is to handle the mixed case, that is the case for metric perturbations with unbounded electromagnetic potentials. In the previous works \cite{Mizutani2,Mizutani3}, we proved the same local-in-time Strichartz estimates as in the free case under Assumptions \ref{assumption_A} and \ref{assumption_C} with $m<2$ and the following long-range condition
$$
|\partial_x^\alpha(g^{jk}(x)-\delta_{jk})|\le C_\alpha\<x\>^{-\mu-|\alpha|},\quad \mu>0.
$$
This paper is a natural continuation of this work, and the results in the series of works can be regarded as a generalization and unification of many of known local-in-time Strichartz estimates for Schr\"odinger equations with both of metric and unbounded potential perturbations, at least under the nontrapping condition. 

\subsection{Notations}
Throughout the paper we use the following notations: 
We write $L^q=L^q(\R^d)$ if there is no confusion. 
$W^{s,q}=W^{s,q}(\R^d)$ is the Sobolev space with the norm $\norm{f}_{W^{s,q}}=\norm{\<D\>^sf}_{L^q}$. 
For Banach spaces $X$ and $Y$, $\norm{\cdot}_{X\to Y}$ denotes the operator norm from $X$ to $Y$. 
For constants $A,B\ge0$, $A\lesssim B$ means that there exists some universal constant $C>0$ such that $A \le CB$. 
$A\approx B$ means $A\lesssim B$ and $B\lesssim A$. 

We always use the letter $P$ (resp. $H$) to denote variable coefficient (resp. flat)  Schr\"odinger operators. For $h\in (0,1]$, we consider $P^h:=h^2P$ as a semiclassical Schr\"odinger operator with $h$-dependent potentials $h^2V$ and $hA_j$. We set two corresponding $h$-dependent symbols $p^h$ and $p_1^h$ defined by
\begin{equation}
\begin{aligned}
\label{symbols_3}
p^h(x,\xi)&=\frac12g^{jk}(x)(\xi_j-hA_j(x))(\xi_k-hA_k(x))+h^2V(x),\\
p_{1}^h(x,\xi)&=-\frac i2\frac{\partial g^{jk}}{\partial x_j}(x)(\xi_k-hA_k(x))-\frac{ih}{2}g^{jk}(x)\frac{\partial A_k}{\partial x_j}(x).
\end{aligned}
\end{equation}
It is easy to see that $P^h=p^h(x,hD)+hp_{1}^h(x,hD)$ and that Assumption \ref{assumption_A} implies
\begin{equation}
\begin{aligned}
\label{symbols_2}
|\dderiv{x}{\xi}{\alpha}{\beta}p^h(x,\xi)|
&\le C_{\alpha\beta}\<x\>^{-|\alpha|}\<\xi\>^{-|\beta|}(|\xi|^2+h^2\<x\>^m),\\
|\dderiv{x}{\xi}{\alpha}{\beta}p_1^h(x,\xi)|
&\le C_{\alpha\beta}\<x\>^{-1-|\alpha|}\<\xi\>^{-|\beta|}(|\xi|+h\<x\>^{m/2}).
\end{aligned}
\end{equation}

\subsection{Strategy of the proof}
Before starting the details of the proof of main theorems, we here explain the basic idea. For simplicity we may assume $d\ge3$. 
The strategy is based on microlocal techniques and basically follows the same general lines in \cite{JSS,Staffilani_Tataru,BGT} and \cite[Sections 5 and 6]{Bouclet_Tzvetkov_1}. 
We however note that, since the potential $V$ is not bounded below, the Littlewood-Paley theory in terms of the spectral multiplier $f(P)$ does not work well. For example, the Littlewood-Paley  estimates of the forms
\begin{align}
\label{strategy_0}
\norm{v}_{L^q}\lesssim \norm{v}_{L^2}+\Big(\sum_{j=0}^\infty\norm{f(2^{-2j}P)v}_{L^q}^2\Big)^{1/2},\quad f\in C_0^\infty(\R^d\setminus\{0\}),
\end{align}
seem to be false except for the trivial case $q=2$. In particular, it is difficult to apply the localization technique due to \cite{BGT} directly. 

To overcome this difficulty, we introduce a partition of unity $\psi_0+\psi_1=1$ on the phase space $T^*\R^d\cong \R^d_x\times\R^d_\xi$ with symbols $\psi_j\in S(1,dx^2/\<x\>^2+d\xi^2/\<\xi\>^2)$ (see Section 2 for the definition of $S(1,dx^2/\<x\>^2+d\xi^2/\<\xi\>^2)$) supported in the following ``high frequency" and ``low frequency" regions, respectively:
$$
\supp \psi_0\subset\{\<x\>^m< \ep|\xi|^2\},\quad
\sup \psi_1\subset\{\<x\>^m>\ep|\xi|^2/2\},
$$ 
where $\ep>0$ is a sufficiently small constant. 
Let $c>1$ and consider a $c$-adic partition of unity: 
$
\theta_0,\theta\in C_0^\infty(\R)$, $0\le \theta_0, \theta \le 1$, $
\supp \theta\subset(1/c,c)$, $\theta_0(z)+\sum_{j=0}^\infty\theta(c^{-j}z)=1$. 
We use this decomposition with $z=\xi,c=2$ in the high frequency region and with $z=x,c=2^{2/m}$ in the low frequency region, respectively. More precisely, setting $h=2^j$ and using support properties
\begin{align*}
\supp\theta(\xi)\psi_0(x,\xi/h)&\subset\{\<x\>\lesssim h^{-2/m},\ |\xi|\approx1\},\\
\supp\theta(h^{2/m}x)\psi_1(x,\xi/h)&\subset\{\<x\>\approx h^{-2/m},\ |\xi|\lesssim1\}, 
\end{align*}
 we will prove the following Littlewood-Paley type estimates:
$$
\norm{v}_{L^q}
\lesssim 
\norm{v}_{L^2}+
\sum_{k=0,1}\Big(\sum_{j=0}^\infty\norm{\Psi_k^h(x,hD)v}_{L^q}^2\Big)^{1/2},\quad q\in[2,\infty),
$$
where $\Psi_0^h(x,\xi)=\theta(\xi)\psi_0(x,\xi/h)$ up to $O(h^\infty)$ and $\Psi_1^h(x,\xi)=\theta(h^{2/m}x)\psi_1(x,\xi/h)$. Using support properties of $\Psi_k^h$, we obtain the following bounds of the commutators:
\begin{align}
\label{strategy_1}
[P,\Psi_0^h(x,hD)]=O(\<x\>^{-1}h^{-1}),\quad [P,\Psi_1^h(x,hD)]=O(h^{-1+2/m}). 
\end{align}
These terms can be controlled by the local smoothing effect, and the proof of Theorem \ref{theorem_1} thus is reduced to that of the estimates for the localized propagators $\Psi_k^h(x,hD)e^{-itP}$. 

Then we show that, for any symbol $\chi^h\in S(1,dx^2/\<x\>^2+d\xi^2/\<\xi\>^2)$ supported in $\{(x,\xi);\ \<x\>\lesssim h^{-2/m},\ |\xi|\lesssim1\}$, $\chi^h(x,hD)e^{-itP}\chi^h(x,hD)^*$ satisfies the following dispersive estimate:
\begin{align}
\label{strategy_2}
\norm{\chi^h(x,hD)e^{-itP}\chi^h(x,hD)^*}_{L^1\to L^\infty}
\lesssim |t|^{-d/2},\quad |t|\ll hR,\ h\in(0,1],
\end{align}
where $R=\inf |\pi_x(\supp \chi^h)|+1$ for general cases and $R=h^{-2/m}$ for the flat case. 
This estimate can be verified by a slightly refinement of the standard semiclassical parametrix construction. 
Namely, after rescaling $t\mapsto th$ and putting $P^h=h^2P$, we construct the following semiclassical WKB parametrix for $e^{-itP^h/h}\chi^h(x,hD)^*$ up to a time scale of order $O(R)$:
$$
e^{-itP^h/h}\chi^h(x,hD)^*=J_{S^h}(a^h)+O_{L^2\to L^2}(h^\infty),\quad |t|\ll R,
$$
where $J_{S^h}(a^h)$ is a time-dependent semiclassical Fourier integral operator:
$$
J_{S^h}(a^h)u_0(x)=(2\pi h)^{-d}\int e^{i(S^h(t,x,\xi)-y\cdot\xi)/h}a^h(t,x,\xi)u_0(y)dyd\xi.
$$
Here the phase function $S^h(t,x,\xi)$ solves the Hamilton-Jacobi equation associated to $p^h$ on a neighborhood of $\supp \chi^h$ and satisfies
$$
S^h(t,x,\xi)=x\cdot\xi-tp^h(x,\xi)+O(R^{-1}|t|^2),\quad |t|\ll R.
$$
The amplitude $a^h$ approximately solves the transport equation generated by the vector field $\partial_\xi p^h(x,\partial_xS^h)$ and belongs to $S(1,dx^2/\<x\>^2+d\xi^2/\<\xi\>^2)$ with uniform bounds in $h$ and $t$. 
Furthermore, the stationary phase method yields
$$
\norm{J_{S^h}(a^h)}_{L^1\to L^\infty}\lesssim \min(h^{-d},|th|^{-d/2}),\quad |t|\ll R,
$$
which, together with suitable estimates for the error term, implies \eqref{strategy_2}. 

Once we obtain the dispersive estimate, a standard technique due to Ginibre-Velo \cite{Ginibre_Velo} (see also Staffilani-Tataru \cite{Staffilani_Tataru}), together with the $TT^*$-argument \cite{Ginibre_Velo} (see also Keel-Tao \cite{Keel_Tao}) and \eqref{strategy_1}, imply the following endpoint Strichartz estimate with an inhomogeneous error term:
\begin{equation}
\begin{aligned}
\label{strategy_3}
&\norm{\Psi_k^h(x,hD)e^{-itP}u_0}_{L^2_TL^{\frac{2d}{d-2}}}\\
&\lesssim
\norm{\Psi_k^h(x,hD)u_0}_{L^2}
+\norm{\<x\>^{-1/2}h^{-1/2}\Psi_k^h(x,hD)e^{-itP}u_0}_{L^2_TL^2}.
\end{aligned}
\end{equation}
Combining with an almost orthogonality of $\theta(c^{-j}\cdot)$, \emph{i.e.}, $\sum\limits_j \norm{\theta(c^{-j}z)v}_{L^2}^2\approx \norm{v}_{L^2}^2$, we have
$$
\norm{e^{-itP}u_0}_{L^2_TL^{\frac{2d}{d-2}}}
\lesssim
\norm{u_0}_{L^2}
+\norm{\<x\>^{-1/2-m\ep/2}E_{1/2+\ep}e^{-itP}u_0}_{L^2_TL^2},\quad \ep\ge0.
$$
Under Assumption \ref{assumption_C}, we then use the local smoothing effect due to \cite{RZ}:
$$
\norm{\<x\>^{-1/2-\nu}E_{1/m+s}e^{-itP}u_0}_{L^2_TL^2}\le C_{T,\nu}\norm{E_su_0}_{L^2},\quad \nu>0,
$$
 and obtain $\norm{e^{-itP}u_0}_{L^2_TL^{\frac{2d}{d-2}}}\lesssim \norm{E_{1/2-1/m+\ep}u_0}_{L^2}$ if $\ep>0$. Finally, Theorem \ref{theorem_1} is verified by interpolation with the trivial $L^\infty_TL^2$-bound. 
 
 In the flat case, the last term of \eqref{strategy_3} can be replaced by 
$$
\norm{h^{-1/2+1/m}\Psi_k^h(x,hD)e^{-itH}u_0}_{L^2_TL^2},
$$ 
 due to the fact that we obtain \eqref{strategy_2} with $R=h^{-2/m}$. Therefore, using the following energy estimate
$$
\norm{E_se^{-itH}u_0}_{L^2}\le Ce^{C|t|}\norm{E_su_0}_{L^2}
$$
instead of the local smoothing effect, one can remove the $\ep$-loss and obtain Theorem \ref{theorem_2}. 

The paper is organized as follows: We first record some known results on the semiclassical pseudodifferential calculus and prove the above Littlewood-Paley estimates in Section \ref{Preliminaries}. Section \ref{Preliminaries} also discusses local smoothing effect and energy estimates as above. In Section 3, we construct the WKB parametrix and prove dispersive estimates \eqref{strategy_2}. Proofs of Theorems \ref{theorem_1} and \ref{theorem_2} are given in Section 4. We finally prove Corollary \ref{remark_1} in Appendix A with a simpler proof than that of main theorems.


\section{Preliminaries}
\label{Preliminaries}
In this section, we record some known results on the semiclassical pseudodifferential calculus and the Littlewood-Paley theory. This section also discuss local smoothing effects for the propagator $e^{-itP}$ under Assumption \ref{assumption_C}. 

First of all we collect basic properties of the semiclassical pseudodifferential operator ($h$-$\Psi$DO for short). We omit proofs and refer to \cite{Robert2,Martinez} for the details. 
Set a metric on the phase space $T^*\R^d\cong \R^{2d}$ defined by 
$
g=dx^2/\<x\>^2+d\xi^2/\<\xi\>^2
$.  
For a $g$-continuous weight function $m(x,\xi)$, we use H\"ormander's symbol class $S(m,g)$ (see \cite[Chaper 18]{Hormander} and \cite[Section 3]{Doi} for the details), which is the space of smooth functions on $\R^{2d}$ satisfying
$$
|\dderiv{x}{\xi}{\alpha}{\beta}a(x,\xi)|\le C_{\alpha\beta}m(x,\xi)\<x\>^{-|\alpha|}\<\xi\>^{-|\beta|}.
$$
To a symbol $a \in C^\infty(\R^{2d})$ and $h\in(0,1]$, we associate the $h$-$\Psi$DO $a(x,hD)$ defined by
$$
a(x,hD)f(x)=(2\pi h)^{-d}\int e^{i(x-y)\cdot\xi/h}a(x,\xi)f(y)dyd\xi,\quad f\in \S(\R^d),
$$
where $\S(\R^d)$ is the Schwartz class. For a $h$-$\Psi$DO $A$, we denote its symbol by $\Sym(A)$, \emph{i.e.}, $A=a(x,hD)$ if $a=\Sym(A)$. It is known as the Calder\'on-Vaillancourt theorem that for any symbol $a\in C^\infty(\R^{2d})$ satisfying
$
|\dderiv{x}{\xi}{\alpha}{\beta}a(x,\xi)|\le C_{\alpha\beta},
$
$a(x,hD)$ is extended to a bounded operator on $L^2(\R^d)$ with a uniform bound in $h\in(0,1]$.
Moreover, if  
$
|\dderiv{x}{\xi}{\alpha}{\beta}a(x,\xi)|\le C_{\alpha\beta}\<\xi\>^{-\gamma}
$
with some $\gamma >d$, then $a(x,hD)$ is extended to a bounded operator from $L^q$ to $L^r$ with bounds
\begin{align}
\label{pdo_1}
\norm{a(x,hD)}_{L^q \to L^r} \le C_{qr}h^{-d(1/q-1/r)},\quad 1\le q\le r \le\infty,
\end{align}
where $C_{qr}>0$ is independent of $h\in(0,1]$. These bounds follow from the Schur lemma and the Riez-Thorin interpolation theorem (see, \emph{e.g.},  \cite[Proposition 2.4]{Bouclet_Tzvetkov_1}). For two symbols $a\in S(m_1,g)$ and $b\in S(m_2,g)$, $a(x,hD)b(x,hD)$ is also a $h$-$\Psi$DO with the symbol $a\sharp b(x,\xi)=e^{ihD_\eta D_z}a(x,\eta)b(z,\xi)|_{z=x,\eta=\xi}\in S(m_1m_2,g)$, which has the expansion
\begin{align}
\label{pdo_2}
a\sharp b-\sum_{|\alpha|<N}\frac{h^{|\alpha|}}{i^{|\alpha|}\alpha!}\partial_\xi^\alpha a\cdot \partial_x^\alpha b\in S(h^N\<x\>^{-N}\<\xi\>^{-N}m_1m_2,g).
\end{align}
In particular, we have $\Sym([a(x,hD),b(x,hD)])-\frac hi\{a,b\}\in S(h^2\<x\>^{-2}\<\xi\>^{-2},g)$, where $\{a,b\}=\partial_\xi a\cdot\partial_xb-\partial_xa\cdot\partial_\xi b$ is the Poisson bracket. 
The symbol of the adjoint $a(x,hD)^*$ is given by $a^*(x,\xi)=e^{ihD_\eta D_z}a(z,\eta)|_{z=x,\eta=\xi}\in S(m_1,g)$ which has the expansion
\begin{align}
\label{pdo_3}
a^*-\sum_{|\alpha|<N}\frac{h^{|\alpha|}}{i^{|\alpha|}\alpha!}\partial_\xi^\alpha\partial_x^\alpha a\in S(h^N\<x\>^{-N}\<\xi\>^{-N}m_1,g).
\end{align}
We also often use the following which is a direct consequence of \eqref{pdo_2}:

\begin{lemma}
\label{lemma_pdo_1}
Let $a\in S(m_1,g)$ and $b\in S(m_2,g)$. If $b\equiv1$ on $\supp a$, then for any $N\ge0$, 
$$
a(x,hD)=a(x,hD)b(x,hD)+h^Nr_N(x,hD)=b(x,hD)a(x,hD)+h^N\wtilde r_N(x,hD)
$$
with some $r_N,\wtilde r_N\in S(\<x\>^{-N}\<\xi\>^{-N}m_1m_2,g)$. 
\end{lemma}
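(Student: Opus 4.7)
The plan is to derive both identities directly from the composition formula \eqref{pdo_2}, observing that the hypothesis $b\equiv1$ on $\supp a$ — which I interpret in the usual way as $b=1$ on an open neighborhood of $\supp a$ — collapses the asymptotic expansion of $a\sharp b$ (and of $b\sharp a$) to its leading term.

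For the first identity I would apply \eqref{pdo_2} to the product $a(x,hD)b(x,hD)$. The $\alpha=0$ term of the symbolic expansion is $a\cdot b$, which is identically $a$: off $\supp a$ the factor $a$ vanishes, while on $\supp a$ the hypothesis gives $b=1$. For each multi-index $\alpha$ with $1\le |\alpha|<N$, the term $\partial_\xi^\alpha a\cdot\partial_x^\alpha b$ vanishes pointwise on $\R^{2d}$: outside $\supp a$ the factor $\partial_\xi^\alpha a$ is zero, since $\supp\partial_\xi^\alpha a\subseteq \supp a$; on the open neighborhood of $\supp a$ where $b\equiv1$, all partial derivatives of $b$ vanish, so $\partial_x^\alpha b=0$. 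Thus the truncated sum in \eqref{pdo_2} reduces exactly to $a$, and the uniform remainder estimate in that formula provides a symbol $r_N\in S(\<x\>^{-N}\<\xi\>^{-N}m_1m_2,g)$ with $a\sharp b=a+h^Nr_N$, which yields the first identity $a(x,hD)b(x,hD)=a(x,hD)+h^Nr_N(x,hD)$.

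The second identity follows by applying the same reasoning to $b\sharp a$: the leading term is $b\cdot a=a$, and for $|\alpha|\ge1$ the term $\partial_\xi^\alpha b\cdot\partial_x^\alpha a$ vanishes identically because $\partial_\xi^\alpha b\equiv0$ on a neighborhood of $\supp a$ (the hypothesis makes $b$ locally constant there, so all partials, including those in $\xi$, vanish) while $\partial_x^\alpha a$ is supported in $\supp a$. The remainder in \eqref{pdo_2} then produces $\wtilde r_N$ with the same uniform bounds. There is no serious obstacle: this is a textbook consequence of the symbolic calculus, and the only subtlety worth flagging is the neighborhood interpretation of the hypothesis — without it the derivatives of $b$ need not vanish on $\supp a$ and the higher-order terms in the expansion would fail to drop out.
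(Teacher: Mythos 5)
Your proof is correct and is exactly what the paper has in mind: the paper presents Lemma~\ref{lemma_pdo_1} as ``a direct consequence of \eqref{pdo_2}'' without spelling out details, and your argument simply writes out that consequence, identifying the leading term of the expansion as $a$ and observing that all higher-order terms $\partial_\xi^\alpha a\cdot\partial_x^\alpha b$ (resp.\ $\partial_\xi^\alpha b\cdot\partial_x^\alpha a$) vanish identically. Your caveat that the hypothesis must be read as $b\equiv 1$ on a neighborhood of $\supp a$ is the right reading — without it the derivatives of $b$ need not vanish on $\supp a$ — and it is how the lemma is used throughout the paper, so no gap remains.
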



\subsection{Littlewood-Paley estimates}
\label{LP}
We here prove Littlewood-Paley type estimates, which will be used to reduce the proof of the estimates \eqref{theorem_1_1} to that of semiclassical Strichartz estimates. Here and in what follows, the summation over $h$, $\sum\limits_h$, means that $h$ takes all negative powers of 2 as values, \emph{i.e.}, $\sum\limits_h:=\sum\limits_{h=2^{-j},\,j\ge0}$. 

\begin{proposition}[Littlewood-Paley estimates]
\label{proposition_LP_1}
For $h\in(0,1]$, there exist two symbols $\Psi^h_0$ and $\Psi_1^h$ such that the following statements are satisfied:\\
\emph{(1)} \emph{(}Symbol estimates\emph{)} $\{\Psi^h_k\}_{h\in(0,1]}$ are bounded in $S(1,h^{4/m}dx^2+d\xi^2/\<\xi\>^2)$, i.e.,
$$
|\dderiv{x}{\xi}{\alpha}{\beta}\Psi_k^h(x,\xi)|\le C_{\alpha\beta}h^{(2/m)|\alpha|}\<\xi\>^{-|\beta|},\quad k=0,1,
$$
uniformly with respect to $h\in(0,1]$. \\
\emph{(2} \emph(Support properties\emph) There exist $C>0$ and $C_\ep>0$, independent of $x,\xi\in\R^d$ and $h\in (0,1]$, such that
\begin{align}
\label{proposition_LP_1_1}
\supp \Psi_0^h&\subset\{(x,\xi);\ h^2\<x\>^m\le C_\ep,\ C^{-1}\le |\xi|^2\le C\},\\
\label{proposition_LP_1_2}
 \supp \Psi_1^h&\subset\{(x,\xi);\ C_\ep^{-1}\le h^2\<x\>^m\le C_\ep,\ |\xi|^2\le C\}.
\end{align}
\emph{(3)} \emph(Littlewood-Paley estimates\emph) For any $q\in [2,\infty)$,
\begin{align}
\label{proposition_LP_1_3}
\norm{v}_{L^q}
\le C_q
\norm{v}_{L^2}+
\sum_{k=0,1}\Big(\sum_{h}\norm{\Psi_k^h(x,hD)v}_{L^q}^2\Big)^{1/2}.
\end{align}
\end{proposition}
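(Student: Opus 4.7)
The plan is to construct $\Psi_0^h,\Psi_1^h$ from a two-step phase-space decomposition---first a partition of unity $\psi_0+\psi_1\equiv 1$ separating the high-frequency region $\{|\xi|^2>\<x\>^m/\epsilon\}$ from the low-frequency region $\{|\xi|^2<2\<x\>^m/\epsilon\}$, then a dyadic decomposition in $\xi$ for the first and in $x$ for the second---and to derive \eqref{proposition_LP_1_3} from the classical frequency-side Littlewood--Paley theorem combined with a finite-overlap spatial argument.

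Concretely, I fix $\epsilon>0$ small and take $\psi_0,\psi_1\in S(1,g)$ with $\psi_0+\psi_1\equiv 1$, $\supp\psi_0\subset\{\<x\>^m<\epsilon|\xi|^2\}$ and $\supp\psi_1\subset\{\<x\>^m>\epsilon|\xi|^2/2\}$ (for example $\psi_1(x,\xi)=\chi(|\xi|^2/(\epsilon\<x\>^m))$ for a suitable $\chi\in C_0^\infty(\R)$). I choose $\theta_0,\theta\in C_0^\infty(\R)$ with $\theta_0(z)+\sum_{j\ge 0}\theta(c^{-j}z)\equiv 1$ and $\supp\theta\subset(c^{-1},c)$, used with $(z,c)=(\xi,2)$ in the first region and with $(z,c)=(x,2^{2/m})$ in the second. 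For $h=2^{-j}$ I set
$$
\Psi_0^h(x,\xi)=\theta(\xi)\psi_0(x,\xi/h),\qquad \Psi_1^h(x,\xi)=\theta(h^{2/m}x)\psi_1(x,\xi/h).
$$
The support statements \eqref{proposition_LP_1_1}--\eqref{proposition_LP_1_2} follow immediately. For the symbol bound (1) the key observation is that $\partial_x\psi_k(x,\xi/h)$ vanishes outside the transition layer $\<x\>^m\sim|\xi/h|^2$; on $\supp\theta(\xi)$ (resp.\ on $\supp\theta(h^{2/m}x)$) this forces $\<x\>\sim h^{-2/m}$, so $|\partial_x^\alpha\psi_k(x,\xi/h)|\lesssim\<x\>^{-|\alpha|}\lesssim h^{(2/m)|\alpha|}$. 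Derivatives falling on $\theta(h^{2/m}x)$ give $h^{2/m}$ explicitly, and $\partial_\xi$ contributes $\<\xi\>^{-1}$ through either factor.

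For \eqref{proposition_LP_1_3} I decompose $v=v_0+v_1$ with $v_k=\psi_k(x,D)v$; by Calder\'on--Vaillancourt $\|v_k\|_{L^2}\lesssim\|v\|_{L^2}$. For $v_0$ the classical Littlewood--Paley theorem combined with Minkowski's inequality (valid for $q\ge 2$) gives
$$
\|v_0\|_{L^q}\lesssim\|\theta_0(D)v_0\|_{L^q}+\Big(\sum_h\|\theta(hD)v_0\|_{L^q}^2\Big)^{1/2}.
$$
By \eqref{pdo_2}, $\theta(hD)\psi_0(x,D)$ equals the $h$-$\Psi$DO with symbol $\theta(\xi)\psi_0(x,\xi/h)=\Psi_0^h$ modulo a remainder in $S(h\<x\>^{-1}\<\xi\>^{-\infty},g)$, which maps $L^2\to L^q$ with a factor of $h$ by \eqref{pdo_1}; after square summation this is absorbed in $\|v\|_{L^2}$. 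The residual $\theta_0(D)v_0$ has a compactly supported symbol (since $\supp\psi_0\cap\{|\xi|\lesssim 1\}\subset\{\<x\>\lesssim 1\}$) and is bounded $L^2\to L^q$ by \eqref{pdo_1}. For $v_1$ I use $\theta_0(x)+\sum_h\theta(h^{2/m}x)\equiv 1$; the supports $\{h^{2/m}|x|\in(c^{-1},c)\}$ overlap finitely in $h$, so pointwise $|\sum_h\theta(h^{2/m}x)v_1|^q\lesssim\sum_h|\theta(h^{2/m}x)v_1|^q$, and combining with the $\ell^2\hookrightarrow\ell^q$ embedding ($q\ge 2$) yields
$$
\|v_1\|_{L^q}\lesssim\|\theta_0(x)v_1\|_{L^q}+\Big(\sum_h\|\theta(h^{2/m}x)v_1\|_{L^q}^2\Big)^{1/2}.
$$
Each $\theta(h^{2/m}x)\psi_1(x,D)$ equals $\Psi_1^h(x,hD)$ modulo lower-order remainders treated as above, and $\theta_0(x)v_1$ has a compactly supported symbol because $\psi_1$ forces $|\xi|\lesssim\<x\>^{m/2}\lesssim 1$ on $\supp\theta_0(x)$.

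Structurally the argument is standard once the transition-layer observation is in place; this is what allows the naive products to belong to the much improved class $S(1,h^{4/m}dx^2+d\xi^2/\<\xi\>^2)$ rather than merely $S(1,g)$. The principal technical nuisance is the bookkeeping of the lower-order remainders when identifying the classical composites $\theta(hD)\psi_0(x,D)$ and $\theta(h^{2/m}x)\psi_1(x,D)$ with the $h$-$\Psi$DOs $\Psi_k^h(x,hD)$: one must verify that these remainders have enough $h$-decay so that the square sum over dyadic scales is $L^2$-controlled by $\|v\|_{L^2}$, which in turn uses precisely the improved symbol estimate (1) together with \eqref{pdo_1}--\eqref{pdo_2}.
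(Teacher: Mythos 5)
Your overall strategy matches the paper's closely --- same partition $\psi_0+\psi_1$ of phase space, dyadic decomposition in $\xi$ on $\supp\psi_0$ and in $x$ on $\supp\psi_1$, the transition--layer argument for the symbol estimate (1), and the elementary finite-overlap plus $\ell^2\hookrightarrow\ell^q$ argument for the spatial square-function estimate. The support properties and symbol estimates you give are correct.

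The genuine gap is in your treatment of $\theta(hD)\psi_0(x,D)$ for the high-frequency piece. You define $\Psi_0^h(x,\xi):=\theta(\xi)\psi_0(x,\xi/h)$ to be just the leading term of the composition, and you assert that the remainder lies in $S(h\<x\>^{-1}\<\xi\>^{-\infty},g)$ and therefore ``maps $L^2\to L^q$ with a factor of $h$.'' This is not enough: by \eqref{pdo_1} a symbol in $S(h\cdot\<\xi\>^{-\infty},g)$ has operator norm $O(h\cdot h^{-d(1/2-1/q)})=O(h^{1-d(1/2-1/q)})$ from $L^2$ to $L^q$, and the square sum $\sum_h h^{2(1-d(1/2-1/q))}$ diverges as soon as $q\ge 2d/(d-2)$ (and even the improvement to $h^{1+2/m}$ coming from the transition layer only buys you the range $q<\infty$ when $d<2+4/m$). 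Since the proposition is asserted for all $q\in[2,\infty)$ and all $d\ge 2$, your explicit choice of $\Psi_0^h$ does not give a controllable remainder. The fix is not a matter of bookkeeping only --- you must change the \emph{definition} of $\Psi_0^h$: take $\Psi_0^h$ to be the $N$-term partial expansion of the symbol of $\theta(hD)\psi_0(x,D)$ (after inserting an intermediate cutoff $\wtilde\theta(hD)\equiv 1$ on $\supp\theta(h\cdot)$ to gain compact $\xi$-support), which still has the same support and symbol estimates, and leaves a remainder of size $O(h^N\<\xi\>^{-\infty})$ that is $O_{L^2\to L^q}(h^{N-d(1/2-1/q)})$; then choose $N>d(1/2-1/q)$. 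This is exactly what the paper does. Note also that for the low-frequency piece no correction is needed at all, since $\theta(h^{2/m}x)\psi_1(x,D)$ is exactly $\Psi_1^h(x,hD)$ in the standard (left) quantization, whereas you mentioned ``lower-order remainders treated as above.''
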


\begin{remark}
\label{remark_LP_1}
As we mentioned in the strategy of the proof, it seems to be difficult to obtain the Littlewood-Paley estimates \eqref{strategy_0} if $q\neq2$. We, however, note that if $V$ satisfies \eqref{assumption_V}, then one can prove \eqref{strategy_0} for $q\in[2,\infty)$ (see Appendix). 
\end{remark}

In order to prove Proposition \ref{proposition_LP_1}, we first construct the symbols $\Psi_k^h$. Let $\varphi\in C^\infty_0(\R)$ be such that
$
\supp \varphi\subset[-1,1],\ \varphi\equiv1\ \text{on}\ [-1/2,1/2]$ and $0\le\varphi\le1. 
$
For $\ep>0$, define smooth cut-off functions into high and low frequency regions by
$$
\psi_0(x,\xi)=\varphi\Big(\frac{\<x\>^{m/2}}{\ep^{1/2}|\xi|}\Big),\ \psi_1=1-\psi_0,
$$
respectively. 
It is easy to see that 
$$
\supp \psi_0\subset\{(x,\xi); \<x\>^m\le \ep|\xi|^2\},\ 
\supp\psi_1\subset\{(x,\xi); \<x\>^m\ge \ep|\xi|^2/2\}
$$ 
and that $\psi_0,\psi_1\in S(1,g)$ for each $\ep>0$, \emph{i.e.}, 
$$
|\dderiv{x}{\xi}{\alpha}{\beta}\psi_0(x,\xi)|+|\dderiv{x}{\xi}{\alpha}{\beta}\psi_0(x,\xi)|\le C_{\alpha\beta\ep}\<x\>^{-|\alpha|}\<\xi\>^{-|\beta|}.
$$
By Assumption \ref{assumption_A}, we can choose $\ep$, depending only on $\sup_x\<x\>^{-m/2}|A(x)|+\sup_x\<x\>^{-m}|V(x)|$, so small that 
$$
|\xi|^2/2\le p(x,\xi)\le |\xi|^2\quad\text{if}\quad
\<x\>^m\le \ep|\xi|^2. 
$$ 
In the sequel, we fix such a small constant $\ep$ and often suppress the dependence of constants $C_{\alpha\beta\ep}$ on $\ep$. 
\begin{lemma}							
\label{lemma_LP_2} 
For any $\theta\in C_0^\infty(\R^d)$ supported away from the origin and any $N>d$, there exists a bounded family $\{\Psi_0^h\}_{h\in(0,1]}\subset S(1,h^{4/m}dx^2+d\xi^2/\<\xi\>^2)$ satisfying \eqref{proposition_LP_1_1} such that
$$
\norm{\theta(hD)\psi_0(x,D)-\Psi_0^h(x,hD)}_{L^2\to L^q}\le C_{qN}h^{N-d(1/2-1/q)},\quad q\in[2,\infty),
$$
where $C_{qN}>0$ may be taken uniformly in $h\in(0,1]$. 
Moreover, if we set 
$$
\Psi_1^h(x,\xi):=\theta(h^{m/2}x)\psi_1(x,\xi/h),
$$ 
then $\{\Psi_1^h\}_{h\in(0,1]}$ is bounded in $S(1,h^{4/m}dx^2+d\xi^2/\<\xi\>^2)$ and satisfies the support property \eqref{proposition_LP_1_2}.
\end{lemma}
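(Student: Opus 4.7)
The plan is to rewrite $\theta(hD)\psi_0(x,D)$ as a semiclassical $h$-$\Psi$DO, extract its leading symbol via the semiclassical composition formula, and control the remainder by the $L^2\to L^q$ bound \eqref{pdo_1}. Setting $\tilde\psi_0(x,\xi):=\psi_0(x,\xi/h)$, one checks directly that $\{\tilde\psi_0\}_{h\in(0,1]}\subset S(1,g)$ uniformly and $\psi_0(x,D)=\tilde\psi_0(x,hD)$. The formula \eqref{pdo_2} then gives $\theta(hD)\psi_0(x,D)=(\theta\sharp_h\tilde\psi_0)(x,hD)$ with asymptotic expansion
\[
\theta\sharp_h\tilde\psi_0\sim\sum_\alpha\frac{h^{|\alpha|}}{i^{|\alpha|}\alpha!}\partial_\xi^\alpha\theta(\xi)\cdot\partial_x^\alpha\tilde\psi_0(x,\xi).
\]
I define $\Psi_0^h$ by Borel summation of this series, so that the error satisfies $\theta\sharp_h\tilde\psi_0-\Psi_0^h\in h^N S(\langle x\rangle^{-N}\langle\xi\rangle^{-N},g)$ for every $N$. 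Choosing $N>d$ and applying \eqref{pdo_1} to the rescaled symbol $h^{-N}(\theta\sharp_h\tilde\psi_0-\Psi_0^h)$, which has $\langle\xi\rangle^{-N}$ decay, produces the claimed operator-norm bound $C_{qN}h^{N-d(1/2-1/q)}$.

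The heart of the argument is verifying that $\Psi_0^h$ itself lies in the sharper symbol class $S(1,h^{4/m}dx^2+d\xi^2/\langle\xi\rangle^2)$ and satisfies \eqref{proposition_LP_1_1}. The support claim follows term-by-term: each $\partial_\xi^\alpha\theta\cdot\partial_x^\alpha\tilde\psi_0$ is supported in $\supp\theta\cap\{\langle x\rangle^m\le\ep|\xi/h|^2\}$, which is contained in $\{|\xi|^2\in[C^{-1},C],\ h^2\langle x\rangle^m\le C_\ep\}$, and the Borel summation can be arranged to inherit this support. For the symbol estimates, the crucial point is that an $x$-derivative of $\Psi_0^h$ necessarily hits $\tilde\psi_0$, and $\partial_x^\alpha\psi_0$ with $|\alpha|\ge 1$ is supported in the transition region where $|\xi/h|\approx\langle x\rangle^{m/2}$. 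Combined with $|\xi|\approx 1$ on $\supp\theta$, this forces $\langle x\rangle\approx h^{-2/m}$, and hence $\langle x\rangle^{-|\alpha|}\approx h^{2|\alpha|/m}$, exactly matching the $h^{4/m}dx^2$ part of the metric. A parallel argument handles $\xi$-derivatives using $\langle\xi/h\rangle\approx h^{-1}$ on that transition region, where $h^{-|\beta|}\langle\xi/h\rangle^{-|\beta|}\lesssim 1\approx\langle\xi\rangle^{-|\beta|}$.

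For $\Psi_1^h(x,\xi)=\theta(h^{2/m}x)\psi_1(x,\xi/h)$ the construction is explicit, so only the symbol estimates and support need to be checked. The cutoff $\theta(h^{2/m}x)$ belongs to $S(1,h^{4/m}dx^2)$ by direct computation and forces $\langle x\rangle\approx h^{-2/m}$, hence $h^2\langle x\rangle^m\approx 1$; derivatives of $\psi_1(x,\xi/h)$ either vanish (where $\psi_1$ is locally constant) or live in the transition region, where $|\xi/h|\approx\langle x\rangle^{m/2}\approx h^{-1}$ gives $|\xi|\lesssim 1$, closing the symbol estimates. The main obstacle throughout is the sharpness of the symbol class: the naive bound $|\partial_x^\alpha[\theta(\xi)\psi_0(x,\xi/h)]|\le C\langle x\rangle^{-|\alpha|}$ is too weak for small $|x|$, and the required improvement rests entirely on the observation that such derivatives are nonzero only where $\langle x\rangle\approx h^{-2/m}$—a geometric localization that is essential for the Littlewood-Paley decomposition to accommodate the unbounded potential.
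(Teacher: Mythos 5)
Your treatment of $\Psi_1^h$ is correct, but the construction of $\Psi_0^h$ rests on a false premise. You assert that $\tilde\psi_0(x,\xi):=\psi_0(x,\xi/h)$ belongs to $S(1,g)$ uniformly in $h$, and then apply the semiclassical composition formula \eqref{pdo_2} to $\theta\sharp_h\tilde\psi_0$. But $\tilde\psi_0$ is \emph{not} uniformly bounded in $S(1,g)$. Indeed, $\partial_{\xi_j}\tilde\psi_0(x,\xi)=h^{-1}(\partial_{\xi_j}\psi_0)(x,\xi/h)$ is supported in the transition region where $h\<x\>^{m/2}\approx\ep^{1/2}|\xi|$; taking $\<x\>\approx 1$ gives $|\xi|\approx h\ep^{-1/2}$ there, and a direct computation yields $|\partial_{\xi_j}\tilde\psi_0|\approx|\xi|^{-1}\approx\ep^{1/2}h^{-1}$, which is unbounded as $h\to 0$ and cannot be dominated by $\<\xi\>^{-1}$. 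The paper explicitly flags this: ``$\psi_0(x,\xi/h)$ may have singularities at $\xi=0$ as $h\to 0$.'' Without uniform symbol bounds on $\tilde\psi_0$, the expansion \eqref{pdo_2} and its remainder estimates cannot be invoked, so your definition of $\Psi_0^h$ by Borel summation and the ensuing $L^2\to L^q$ bound are unjustified.

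The paper's proof repairs exactly this point by a two-step composition. One first introduces $\tilde\theta\in C_0^\infty$ with $\tilde\theta\equiv 1$ on $\supp\theta$ and supported away from the origin, writes $\theta(hD)\psi_0(x,D)=\theta(hD)\tilde\theta(hD)\psi_0(x,D)$, and composes $\tilde\theta(h\cdot)$ with $\psi_0$ using the \emph{non-semiclassical} ($h=1$) calculus. This produces a symbol $\tilde\psi_0^h$ whose $\xi$-support lies in $\supp\tilde\theta(h\cdot)$, i.e.\ $|\xi|\approx h^{-1}$, so that after the rescaling $\xi\mapsto\xi/h$ the support is trapped in $\supp\tilde\theta$, bounded away from $\xi=0$. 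Only then does $\tilde\psi_0^h(\cdot,\cdot/h)$ lie in $S(1,g)$ uniformly in $h$, and the semiclassical expansion \eqref{pdo_2} can be applied to $\theta\sharp_h\tilde\psi_0^h(\cdot,\cdot/h)$ to extract $\Psi_0^h$. Your subsequent observations about the refined symbol class (that $x$-derivatives of $\Psi_0^h$ force $\<x\>\approx h^{-2/m}$, giving the $h^{4/m}dx^2$ gain) are correct, but the argument cannot get off the ground without the preliminary $\tilde\theta$-localization.
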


\begin{proof}
Choose $\wtilde \theta\in C_0^\infty(\R^d)$ so that $\wtilde \theta$ is supported away from the origin and that $\wtilde \theta\equiv1$ on $\supp \theta$. Then 
we learn by the expansion formula \eqref{pdo_2} (with $h=1$) that
$$
\theta(hD)\psi_0(x,D)
=\theta(hD)\wtilde \theta(hD) \psi_0(x,D)
=\theta(hD)\wtilde \psi_0^h(x,D)+\theta(hD)\wtilde r_N^h(x,D),
$$
where $\wtilde \psi_0^h\in S(1,g)$ is given by
$$
\wtilde \psi_0^h(x,\xi)=\sum_{|\alpha|<N}\frac{i^{-|\alpha|}}{\alpha!}\partial_\eta^\alpha\partial_z^\alpha \wtilde{\theta}(h\eta)\psi_0(z,\xi)\Big|_{\eta=\xi,z=x}
$$
such that the following support property holds:
\begin{equation}
\begin{aligned}
\label{proof_proposition_LP_1_1}
\supp \wtilde \psi_0^h(\cdot,\cdot/h)
&\subset\{(x,\xi);\ (x,\xi)\in \supp \psi_0(\cdot,\cdot/h),\ \xi\in\supp \theta\}\\
&\subset\{(x,\xi);\ h^2\<x\>^m\le \ep|\xi|^2\le C\ep,\ C^{-1}\le |\xi|\le C\},
\end{aligned}
\end{equation}
with some $C>0$. 
The remainder $\wtilde r_N^h$ belongs to $S(\<x\>^{-N}\<\xi\>^{-N},g)$ with uniform bounds in $h$, and hence $\<D\>^N \wtilde r_N^h(x,D)$ is bounded on $L^2$ uniformly in $h\in(0,1]$. Since $|\xi|\approx h^{-1}$ on $\supp \theta(h\cdot)$, we see that $|\xi|\ge \delta$ with some $\delta>0$ and hence
$$
\<\xi\>^{-N}\le C_N|\xi|^{-N}\le C_N h^{-N}\quad\text{on}\quad \supp \theta(h\cdot).
$$
Therefore, the estimate \eqref{pdo_1} with $(q,r)$ replaced by $(2,q)$ implies that
\begin{align*}
\norm{\theta(hD)\wtilde r_N^h(x,D)}_{L^2\to L^q}
&\le \norm{\theta(hD)\<D\>^{-N}}_{L^2 \to L^q}\norm{\<D\>^N\wtilde r_N^h(x,D)}_{L^2\to L^2}\\
&\le C_N h^N \norm{\theta(hD)(hD)^{-N}}_{L^2 \to L^q}\\
&\le C_{qN} h^{N-d(1/2-1/q)}. 
\end{align*}
For the main term, we have
\begin{align*}
|\dderiv{x}{\xi}{\alpha}{\beta}\wtilde \psi_0^h(x,\xi/h)|
&\le C_{\alpha\beta}\<x\>^{-|\alpha|}h^{-|\beta|}\<\xi/h\>^{-|\beta|}\\
&\le C_{\alpha\beta}\<x\>^{-|\alpha|}(h+|\xi|)^{-|\beta|}
\le C_{\alpha\beta}\<x\>^{-|\alpha|}\<|\xi|\>^{-|\beta|},
\end{align*}
where, in the last line, we have used the bound $|\xi|\ge C^{-1}$ on $\pi_\xi(\supp \wtilde\psi_0(\cdot,\cdot/h))\subset \supp\theta$. 
Therefore $\{\wtilde \psi_0^h(\cdot,\cdot/h)\}_{h\in(0,1]}$ is bounded in $S(1,g)$, while $\psi_0(x,\xi/h)$ may have singularities at $\xi=0$ as $h\to0$. 
In particular, $\wtilde \psi_0^h(x,D)$ can be regarded as a $h$-$\Psi$DO with the symbol $\wtilde \psi_0^h(\cdot,\cdot/h)$. \eqref{pdo_2} again implies that there exist bounded families $\{\Psi_0^h\}_{h\in(0,1]}\subset S(1,g)$ and $\{r_N^h\}_{h\in(0,1]}\subset S(\<x\>^{-N}\<\xi\>^{-N},g)$ such that
\begin{align*}
\theta(hD)\wtilde \psi_0^h(x,D)
=\Psi_0^h(x,hD)+h^N r_N^h(x,hD),
\end{align*}
where $\Psi_0^h$ is given explicitly by
\begin{align}
\label{proof_proposition_LP_1_2}
\Psi_0^h(x,\xi)=\sum_{|\alpha|<N}\frac{h^{|\alpha|}i^{-|\alpha|}}{\alpha!}\partial_\eta^\alpha\partial_z^\alpha \theta(\eta)\wtilde\psi_0^h(z,\xi/h)\Big|_{\eta=\xi,z=x},
\end{align}
and the remainder satisfies
$$\norm{r_N^h(x,hD)}_{L^2\to L^q}
\le C_{qN}h^{-d(1/2-1/q)},\quad 1\le q\le \infty.
$$
By virtue of \eqref{proof_proposition_LP_1_1} and \eqref{proof_proposition_LP_1_2}, we see that  
$$
\supp \Psi_0^h(x,\xi)
\subset\supp \theta \cap \supp \psi_0(\cdot,\cdot/h)
\subset\{h^2\<x\>^m\le C_0\ep,\ C_0^{-1}\le |\xi|\le C_0\}
$$
and that, for $|\alpha|\ge1$, 
$$
\supp \partial_x^\alpha\Psi_0^h
\subset\supp \theta \cap \supp \psi_0'(\cdot,\cdot/h)
\subset\{C_0^{-1}\ep\le h^2\<x\>^m\le C_0\ep,\ C_0^{-1}\le |\xi|\le C_0\},
$$
where $C_0>0$ is independent of $x,\xi,h$ and $\ep$. 
Therefore, $\Psi_0^h$ satisfies \eqref{proposition_LP_1_1} and 
$$
|\dderiv{x}{\xi}{\alpha}{\beta}\Psi_0^h(x,\xi)|
\le C_{\alpha\beta}\<x\>^{-|\alpha|}\<x\>^{-|\beta|}
\le C_{\alpha\beta}h^{2|\alpha|/m}\<\xi\>^{-|\beta|},\quad h\in(0,1]. 
$$

On the other hand, since $\dderiv{x}{\xi}{\alpha}{\beta}\psi_1$ are supported in 
$$
\supp \psi_0'\subset\{(x,\xi);\ \ep|\xi|^2/2<\<x\>^m\le \ep|\xi|^2\}
$$ for any $|\alpha+\beta|\ge1$, we learn 
$$
|\xi|\approx h^2\<x\>^m\approx\ep\quad\text{on}\quad\supp \theta(h^{2/m}x)\cap\supp\dderiv{x}{\xi}{\alpha}{\beta}\psi_1(x,\xi/h)
$$ 
as long as $|\alpha+\beta|\ge1$. Hence $\{\Psi_1^h\}_{h\in(0,1]}$ is also bounded in $S(1,h^{4/m}dx^2+d\xi^2/\<\xi\>^2)$ and satisfies the support property \eqref{proposition_LP_1_2}. 
\end{proof}

We next recall the square function estimates for the standard Littlewood-Paley projections.

\begin{lemma}[Square function estimates]
\label{lemma_LP_3}
Let $c>1$ and consider a $c$-adic partition of unity: 
$$
\theta_0,\theta\in C_0^\infty(\R^d),\ \supp \theta\subset\{1/c<|x|<c\},\ 0\le \theta_0,\theta\le1,\ \theta_{0}(x)+\sum_{l\ge0} \theta(c^{-l}x)=1.
$$
Then, for any $1<q<\infty$, 
\begin{align*}
\norm{v}_{L^q}
&\approx
\Big|\Big|\Big(|\theta_{0}(D)v|^2+\sum_{l\ge0}|\theta(c^{-l}D)v|^2\Big)^{1/2}\Big|\Big|_{L^q}\\&\approx
\Big|\Big|\Big(|\theta_{0}(x)v|^2+\sum_{l\ge0}|\theta(c^{-l}x)v|^2\Big)^{1/2}\Big|\Big|_{L^q}.
\end{align*}
Moreover, if $2\le q<\infty$ then
\begin{align}
\label{lemma_LP_3_3}
\norm{v}_{L^q}&\lesssim \norm{v}_{L^2}+\Big(\sum_l\norm{\theta(c^{-l}D)v}_{L^q}^2\Big)^{1/2},\\ 
\label{lemma_LP_3_4}
\norm{v}_{L^q}&\lesssim \norm{\theta_{0}(x)v}_{L^q}+\Big(\sum_l\norm{\theta(c^{-l}x)v}_{L^q}^2\Big)^{1/2}.
\end{align}
Here implicit constants depend only on $d$ and $q$. 
\end{lemma}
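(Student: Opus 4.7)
The plan is to treat the two equivalences separately, since they rest on quite different inputs, and then deduce \eqref{lemma_LP_3_3} and \eqref{lemma_LP_3_4} from the corresponding equivalence. For the frequency-side equivalence I would invoke the standard Littlewood--Paley machinery, and for the spatial-side equivalence I would use that $\theta_0(x)+\sum_l\theta(c^{-l}x)=1$ is a partition of unity with bounded overlap.

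Concretely, for the frequency-side equivalence, I combine Khintchine's inequality with Mikhlin's multiplier theorem. Given any choice of signs $\{\ep_l\}\subset\{-1,+1\}$, the symbol
$$
\sigma_\ep(\xi):=\theta_0(\xi)+\sum_{l\ge0}\ep_l\theta(c^{-l}\xi)
$$
is smooth, bounded, and satisfies $|\partial^\alpha\sigma_\ep(\xi)|\le C_\alpha|\xi|^{-|\alpha|}$ uniformly in $\ep$, because at each $\xi$ only boundedly many terms in the sum are nonzero. Hence by Mikhlin's theorem $\sigma_\ep(D)$ is bounded on $L^q$ for $1<q<\infty$ with a bound independent of $\ep$. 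Integrating $\norm{\sigma_\ep(D)v}_{L^q}^q$ over $\ep\in\{-1,+1\}^\N$ and applying Khintchine's inequality to $\sum\ep_l\theta(c^{-l}D)v(x)$ produces the upper bound
$$
\Big|\Big|\Big(|\theta_0(D)v|^2+\sum_{l\ge0}|\theta(c^{-l}D)v|^2\Big)^{1/2}\Big|\Big|_{L^q}\lesssim \norm{v}_{L^q}.
$$
The reverse inequality follows by duality: expanding $v$ against a test function $w$ in $L^{q'}$ via the partition-of-unity identity $v=\theta_0(D)v+\sum_l\theta(c^{-l}D)v$ (after a standard fattening of supports), one applies Cauchy--Schwarz in $l$ and the forward inequality for the exponent $q'\in(1,\infty)$.

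For the spatial-side equivalence, the argument is much softer: since the supports of $\theta_0$ and $\theta(c^{-l}\cdot)$ have bounded overlap, the pointwise inequality $1\lesssim (|\theta_0(x)|^2+\sum_l|\theta(c^{-l}x)|^2)^{1/2}\lesssim 1$ holds on $\R^d$, and multiplying by $|v(x)|$ and taking the $L^q$ norm yields the equivalence for every $q\in[1,\infty]$. Granting both equivalences, estimate \eqref{lemma_LP_3_3} follows by first dominating $\norm{v}_{L^q}$ by the square function, then applying Minkowski's inequality for $\ell^2\hookrightarrow L^q$ which is available exactly when $q\ge 2$, and finally absorbing the low-frequency piece via Bernstein's inequality $\norm{\theta_0(D)v}_{L^q}\lesssim \norm{v}_{L^2}$ (valid because $\theta_0$ has compact support). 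Estimate \eqref{lemma_LP_3_4} is obtained identically from the spatial equivalence via Minkowski's inequality, with the low-frequency piece $\norm{\theta_0(x)v}_{L^q}$ kept as is.

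The only genuinely delicate step is the application of Mikhlin's theorem with bounds uniform in the Rademacher signs $\ep_l$; everything else is a routine assembly. Since this piece is entirely classical and uses no structure of $P$, in the write-up I would simply cite a standard reference (e.g., Stein's \emph{Singular Integrals}) for the frequency equivalence and present only the elementary partition-of-unity argument for the spatial one, together with the short deductions of \eqref{lemma_LP_3_3} and \eqref{lemma_LP_3_4}.
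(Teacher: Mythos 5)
Your proof is correct and follows essentially the same route as the paper: Rademacher functions, Mikhlin's multiplier theorem and Khintchine's inequality for the frequency-side equivalence, with \eqref{lemma_LP_3_3} and \eqref{lemma_LP_3_4} deduced via Minkowski ($L^q(\ell^2)\hookrightarrow\ell^2(L^q)$ for $q\ge2$) and Bernstein. The one small departure is in the lower bound for the spatial square function: the paper gets $\norm{v}_{L^q}\lesssim\norm{S_1v}_{L^q}$ by duality through H\"older's inequality using the bounded overlap of the $\theta(c^{-l}\cdot)$, whereas you derive it directly from the two-sided pointwise bound $1\lesssim(|\theta_0(x)|^2+\sum_l|\theta(c^{-l}x)|^2)^{1/2}\lesssim1$ (Cauchy--Schwarz on the finitely overlapping partition of unity), which is more elementary and even extends the spatial equivalence to $q\in[1,\infty]$.
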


\begin{proof}
We let $q\in (1,\infty)$ and set $S_1v=(|\theta_{0}(x)v|^2+\sum_{l\ge0}|\theta(c^{-l}x)v|^2)^{1/2}$. Since $\theta^2\le\theta$, 
$$
\Big(\sum_{l\ge0}|\theta(c^{-l}x)|^2\Big)^{1/2}\le
\Big(\sum_{l\ge0}\theta(c^{-l}x)\Big)^{1/2}=
(1-\theta_{0})^{1/2}\le1,
$$
from which we have $\norm{S_1v}_{L^q}\le 2\norm{v}_{L^q}$. Since $\theta(c^{-l}x)\theta(c^{-k}x)=0$ for $|l-k|>2$, we learn by H\"older's inequality that
$$
|\int v_1\overline{v_2}dx|\lesssim \norm{S_1v_1}_{L^q}\norm{S_1v_2}_{L^{q'}},\quad1/q+1/q'=1,
$$
which, together with the upper bound, implies $\norm{v}_{L^q}\lesssim \norm{S_1v}_{L^q}$. Therefore, $\norm{v}_{L^q}\approx \norm{S_1v}_{L^q}$. 

Next we set $S_2v=(|\theta_{0}(D)v|^2+\sum_{l\ge0}|\theta(c^{-l}D)v|^2)^{1/2}$. By the same argument as above, it suffices to check that $\norm{S_2v}_{L^q}\lesssim \norm{v}_{L^q}$. We follow the same argument as that in the proof of \cite[Theorem 0.2.10]{Sogge}. 
For $t\in[0,1]$, define a Fourier multiplier $m_t(D)$ by
$$
m_t(D)=\sum_{l\ge0}r_l(t)\theta(c^{-l}D),
$$ 
where $r_l(t)$ are Rademacher functions. 
Since $|\partial_\xi^\gamma m_t(\xi)|\le C_\gamma c^{-l|\gamma|}\le C_\gamma\<\xi\>^{-|\gamma|}$ with some $C_\gamma>0$ independent of $t$, Mikhlin's multiplier theorem shows that $\norm{m_t(D)v}_{L^q}\le C_q\norm{v}_{L^q}$ for any $q\in(1,\infty)$ with constant independent of $t$. 
On the other hand, Khinchine's inequality implies that
$$
A_q^{-1}\norm{m_t(D)v}_{L^q_t(0,1])}\le\Big(\sum_{l\ge0}|\theta(c^{-l}D)v|^2\Big)^{1/2}\le A_q\norm{m_t(D)v}_{L^q_t(0,1])}.
$$
with some $A_q>0$. 
We therefore conclude that
\begin{align*}
\norm{S_2v}_{L^q(\R^d)}
 &\le C_q \norm{\theta_0(D)v}_{L^q(\R^d)}+C_q\Big|\Big|\Big(\sum_{l\ge0}|\theta(c^{-l}D)v|^2\Big)^{1/2}\Big|\Big|_{L^q(\R^d)}\\
 & \le C_q
 \norm{v}_{L^q(\R^d))}+C_q\Big|\Big|\norm{m_t(D)v}_{L^q_t(0,1])}\Big|\Big|_{L^q(\R^d)}\\
 &\le C_q \norm{v}_{L^q(\R^d)}+C_q\Big|\Big|\norm{m_t(D)v}_{L^q(\R^d)}\Big|\Big|_{L^q_t([0,1])}\\
 &\le C_q \norm{v}_{L^q(\R^d)}. 
\end{align*}

\eqref{lemma_LP_3_3} and \eqref{lemma_LP_3_4} then follows from Minkowski's inequality and Bernstein's inequality since $q\ge2$. 
\end{proof}

We now turn into the proof of Proposition \ref{proposition_LP_1}:

\begin{proof}[Proof of Proposition \ref{proposition_LP_1}]
Set $h=2^{-l}$. We plug 
$
\psi_0(x,D)v
$ 
into \eqref{lemma_LP_3_3} with $c=2$. By virtue of Lemma \ref{lemma_LP_2}, the contribution of the error term $\theta(hD)\wtilde r_N^h(x,D)+h^N r_N^h(x,hD)$ is dominated by $\norm{v}_{L^2}$ provided that $N>d(1/2-1/q)$. We hence have
$$
\norm{\psi_0(x,D)v}_{L^q}\le C_q \norm{v}_{L^2}+\Big(\sum_h\norm{\Psi_0^h(x,hD)v}_{L^q}^2\Big)^{1/2}.
$$
The estimate for $\psi_1(x,D)v$ is verified similarly by using Lemma \ref{lemma_LP_2} and \eqref{lemma_LP_3_4} with $c=2^{2/m}$. 
\end{proof}


\subsection{Local smoothing effects}

We here recall the local smoothing  effects proved by Robbiano-Zuily \cite{RZ}. For $s\in \R$ we set 
$$
e_s(x,\xi):=\left(k_A(x,\xi)+\<x\>^m+L(s)\right)^{s/2}, 
$$
where $k_A(x,\xi)=\frac12g^{jk}(x)(\xi_j-A_j(x))(\xi_k-A_k(x))$ and $L(s)$ is a constant depending on $s$. Then, $e_s \in S(e_s,dx^2/\<x\>^2+d\xi^2/e_1^2)$, that is
\begin{align}
\label{LS_1}
|\dderiv{x}{\xi}{\alpha}{\beta}e_s(x,\xi)|\le C_{\alpha\beta}\ e_{s-|\beta|}(x,\xi)\<x\>^{-|\alpha|}.
\end{align}
Let $E_s=e_s(x,D)$. Then, for any $s\in\R$, there exists $L(s)>0$ such that $E_s$ is a homeomorphism from $\B^{r+s}$ to $\B^r$ for all $r \in \R$, and $E_s^{-1}$ is also a $\Psi$DO with the symbol $\wtilde e_{-s}$ in $S(e_{-s},dx^2/\<x\>^2+d\xi^2/e_1^2)$ (see, \cite[Lemma 4.1]{Doi}). 

We first prepare the following two lemmas:

\begin{lemma}
\label{lemma_LS_1}
For any $s\in\R$, $E_sPE_{s}^{-1}=P+B_s$ with $\norm{B_s-B_s^*}_{L^2\to L^2}\lesssim 1$. 
\end{lemma}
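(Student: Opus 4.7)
The plan is to view $B_s - B_s^*$ as a $\Psi$DO in H\"ormander's class associated to the metric $g' = dx^2/\langle x\rangle^2 + d\xi^2/e_1^2$ (in which $e_s \in S(e_s, g')$ and $p := k_A + V \in S(e_2, g')$) and then to apply the Calder\'on--Vaillancourt theorem. First, from $B_s = [E_s, P]E_s^{-1}$ and $P = P^*$ one computes $B_s^* = -(E_s^*)^{-1}[E_s^*, P]$; multiplying $B_s - B_s^*$ on the left by $E_s^*$ and on the right by $E_s$ then yields the clean identity
\begin{equation*}
E_s^*(B_s - B_s^*)E_s = [E_s^*E_s, P],
\end{equation*}
so that $B_s - B_s^* = (E_s^*)^{-1}\,[E_s^*E_s, P]\,E_s^{-1}$. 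This rewriting is useful because $E_s^*E_s$ is self-adjoint with symbol in $S(e_{2s}, g')$ congruent to $f^s$ (with $f = k_A + \langle x\rangle^m + L(s)$) modulo lower-order terms.

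Second, by the $g'$-calculus the principal symbol of $[E_s^*E_s, P]$ is $\tfrac{1}{i}\{f^s, p\} = \tfrac{s f^{s-1}}{i}\{f, p\}$ modulo lower-order corrections. The decisive cancellation is
\begin{equation*}
\{f, p\} = \{k_A + \langle x\rangle^m, k_A + V\} = \{k_A, V - \langle x\rangle^m\},
\end{equation*}
using $\{k_A, k_A\} = 0 = \{V, \langle x\rangle^m\}$. Sandwiching back with $(E_s^*)^{-1}$ and $E_s^{-1}$, both with symbol in $S(e_{-s}, g')$, the principal contribution to $B_s - B_s^*$ becomes
\begin{equation*}
\frac{s}{if}\,\{k_A, V - \langle x\rangle^m\} = \frac{s}{if}\,g^{jl}(\xi_l - A_l)\,\partial_j(V - \langle x\rangle^m),
\end{equation*}
a purely imaginary symbol (consistent with $B_s - B_s^*$ being skew-adjoint).

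Finally, using Assumption~\ref{assumption_A}'s bounds $|\partial^\alpha V| \lesssim \langle x\rangle^{m-|\alpha|}$, $|\partial^\alpha A| \lesssim \langle x\rangle^{m/2-|\alpha|}$, $|\partial^\alpha g^{jk}| \lesssim \langle x\rangle^{-|\alpha|}$, together with the ellipticity $f \gtrsim e_1^2$, I would verify that this principal symbol and all its $g'$-derivatives lie in $S(1, g')$, from which the Calder\'on--Vaillancourt theorem for the metric $g'$ yields $\|B_s - B_s^*\|_{L^2 \to L^2} \lesssim 1$. The main obstacle is ensuring that the lower-order terms arising from the $\sharp$-product expansion of $(E_s^*)^{-1}\sharp [E_s^*E_s, P]\sharp E_s^{-1}$ and from the skew-symmetric remainder $E_s^* - E_s$ also fall into $S(1, g')$; this requires careful bookkeeping in the $g'$-calculus, where each $\partial_\xi$ applied to $e_s$ or $p$ reduces the order by $e_1^{-1}$ and each $\partial_x$ by $\langle x\rangle^{-1}$, exactly balancing the polynomial growth of $V$ and $A$ allowed by Assumption~\ref{assumption_A} against the weights built into the $g'$-metric.
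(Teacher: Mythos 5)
Your algebraic identity $E_s^*(B_s - B_s^*)E_s = [E_s^*E_s, P]$ is correct and elegant, and the reduction $\{f, p\} = \{k_A, V - \langle x\rangle^m\}$ via $\{k_A,k_A\}=0=\{V,\langle x\rangle^m\}$ is the right cancellation to look for; this is a genuinely different starting point from the paper, which computes $\{e_s,p_{\mathrm{full}}\}$ and $[E_s,P]E_s^{-1}$ directly. However, the final step does not close. You claim that
\[
\frac{s}{if}\{k_A, V - \langle x\rangle^m\} = \frac{s}{if}\,g^{jl}(\xi_l - A_l)\,\partial_j\bigl(V - \langle x\rangle^m\bigr)
\]
lies in $S(1,g')$. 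Estimating the modulus using $|\xi-A|\lesssim e_1=f^{1/2}$ and $|\partial_j(V-\langle x\rangle^m)|\lesssim\langle x\rangle^{m-1}$ gives at best $\langle x\rangle^{m-1}/e_1$, and since $e_1$ is only bounded below by $\langle x\rangle^{m/2}$ (e.g. in the region $|\xi|\lesssim\langle x\rangle^{m/2}$), this is $\lesssim\langle x\rangle^{m/2-1}$, which is \emph{unbounded} for every $m>2$. The ``exact balancing'' you describe only occurs at $m=2$; Calder\'on--Vaillancourt does not apply, and the proof has a genuine gap at precisely the place you flagged as ``the main obstacle.''

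You should also be aware that the issue appears to be structural rather than a matter of bookkeeping: the principal part of $\mathrm{Sym}([E_s^*E_s,P])$ carries an overall factor $1/i$, so the leading part of $\mathrm{Sym}(B_s-B_s^*)$ is purely imaginary and does \emph{not} cancel in $b_s-\bar b_s$. (The paper's own proof seems to drop this $1/i$ and the complex conjugate in the adjoint expansion \eqref{pdo_3}, which makes the leading symbol look real and hence harmless.) Concretely, take $d=1$, $m=4$, $V\equiv A\equiv 0$, $g\equiv 1$, $s=2$, so $E_2=\tfrac12D^2+\langle x\rangle^4+L$, $P=\tfrac12D^2$, and $[E_2,P]=4ix\langle x\rangle^2D+2(1+3x^2)$. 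Pairing $B_2-B_2^*=[E_2,P]E_2^{-1}+E_2^{-1}[E_2,P]$ against $u_R(x)=R^{-1/2}e^{iR^2x}\chi(x/R)$, with $\chi$ a fixed bump near $1$, yields
\[
\bigl|\langle (B_2-B_2^*)u_R,u_R\rangle\bigr| \approx 8R\int \frac{y^3}{\tfrac12+y^4}\,|\chi(y)|^2\,dy \;\longrightarrow\;\infty,
\]
while $\|u_R\|$ stays fixed. So as stated the lemma seems to fail for $m>2$ whenever $V$ is not comparable to $\langle x\rangle^m$. The repair would be to build the weight from the full symbol $p=k_A+V$ (when $V$ is elliptic as in \eqref{assumption_V}), in which case your $\{f,p\}$ genuinely vanishes to leading order; in the general case some additional argument or a modified weight is needed, and your write-up should clarify which hypothesis is being used.
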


\begin{proof}
We write $P=p_{\text{full}}(x,D)=k_A(x,D)+p_1(x,D)+V(x)$, where 
$$
p_{1}(x,\xi)=-\frac i2\frac{\partial g^{jk}}{\partial x_j}(x)(\xi_k-A_k(x))-\frac i2g^{jk}(x)\frac{\partial A_k}{\partial x_j}(x).
$$ 
A direct computation yields 
$$
\{e_{s},k_A+V\}=-\frac s2e_{s-2}\{\<x\>^m,k_A\}+\frac s2e_{s-2}\{k_A,V\}=e_{s-2}(a_0+a_1),
$$
where $a_0\in S(\<x\>^{m-1}\<\xi\>,g)$, and $a_1(x)$ is independent of $\xi$ and satisfies
$
\partial_x^\alpha a_1=O(\<x\>^{3m/2-1-|\alpha|})
$. 
We similarly obtain
$
\{e_s,p_1\}\in S(e_s\<x\>^{-2},g)
$ 
by \eqref{LS_1}. 
Next, since \eqref{assumption_A_1} and \eqref{LS_1} imply
$$
\partial_\xi^\alpha e_s\in S(e_{s-2},g),\ 
\partial_x^\alpha e_s\in S(e_s,g),\ 
\partial_\xi^\alpha p_\text{full}\in S(1,g),\ 
\partial_x^\alpha p_\text{full}\in S(e_2,g)\ \text{if}\ |\alpha|\ge2,
$$
the expansion formula \eqref{pdo_2} shows that the symbol of $[E_s,P]-\{e_s,p_\text{full}\}(x,D)$ belongs to $S(e_s,g)$. We also learn by \eqref{LS_1} that
\begin{align*}
&\partial_\xi^\alpha(e_{s-2}a_0)\partial_x^\alpha \wtilde e_{-s}=O(e_{-3}\<x\>^{m-1}\<\xi\>+e_{-2}\<x\>^{m-1})=O(\<x\>^{-1}),\\
&\partial_\xi^\alpha(e_{s-2}a_1)\partial_x^\alpha \wtilde e_{-s}=O(e_{-3}\<x\>^{3m/2-1})=O(\<x\>^{-1}),
\end{align*}
for all $|\alpha|\ge1$. Therefore, the symbol of $B_s=[E_s,P]E_{s}^{-1}$ is of the form
$
e_{s-2}\wtilde e_{-s}(a_0+a_1)+a_2$ with some $a_2\in S(1,g)
$. 
By a similar argument, we further have
\begin{align*}
\partial_\xi^\alpha\partial_x^\alpha (e_{s-2}\wtilde e_{-s}a_0)=O(\<x\>^{-1}),\ 
\partial_\xi^\alpha\partial_x^\alpha (e_{s-2}\wtilde e_{-s}a_1)=O(\<x\>^{-1})\quad\text{for}\quad |\alpha|\ge1,
\end{align*}
which, together with the expansion formula \eqref{pdo_3}, imply that the symbol of $B_s-B_s^*$  belongs to $S(1,g)$. The assertion now follows from the Calder\'on-Vaillancourt theorem.
\end{proof}

\begin{lemma}
\label{lemma_LS_2}
For any $s\in\R$ there exists $C_s>0$ such that 
$$
\norm{E_se^{-itP}u_0}_{L^2}\le C_se^{C_s|t|}\norm{E_su_0}_{L^2},\ t\in\R.
$$
\end{lemma}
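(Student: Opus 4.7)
The plan is to apply a Gronwall-type energy argument to $v(t):=E_se^{-itP}u_0$. Formally, Lemma~\ref{lemma_LS_1} gives $E_sP=(P+B_s)E_s$, hence
\begin{align*}
i\partial_tv(t)=E_sPe^{-itP}u_0=(P+B_s)v(t),
\end{align*}
with initial data $v(0)=E_su_0$. Differentiating the squared $L^2$-norm, using the self-adjointness of $P$, yields
\begin{align*}
\frac{d}{dt}\norm{v(t)}_{L^2}^2
=2\Im\<(P+B_s)v(t),v(t)\>
=-i\<(B_s-B_s^*)v(t),v(t)\>,
\end{align*}
so that $|\frac{d}{dt}\norm{v(t)}_{L^2}^2|\le \norm{B_s-B_s^*}_{L^2\to L^2}\norm{v(t)}_{L^2}^2\lesssim \norm{v(t)}_{L^2}^2$ by Lemma~\ref{lemma_LS_1}. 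Gronwall's inequality then yields $\norm{v(t)}_{L^2}^2\le C_se^{C_s|t|}\norm{v(0)}_{L^2}^2$, which is the claim.

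To carry this out in a few precise steps: first, reduce to $u_0\in C_0^\infty(\R^d)$ (or some dense subspace contained in $\bigcap_r\B^r$) by density, since $E_s\colon \B^s\to L^2$ is a homeomorphism and $e^{-itP}$ is unitary on $L^2$; on such $u_0$, the map $t\mapsto e^{-itP}u_0$ is strongly differentiable with values in the domain of $P$. Second, verify that $t\mapsto v(t)$ is $C^1$ into $L^2$ with $i\partial_t v=(P+B_s)v$: this follows from Lemma~\ref{lemma_LS_1} once one knows $e^{-itP}u_0$ remains in $\B^{s+2}$ (so that $P e^{-itP}u_0\in \B^s$ and $E_s P e^{-itP}u_0\in L^2$). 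Third, compute $\frac{d}{dt}\norm{v}_{L^2}^2$ as above and apply Gronwall.

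The main obstacle is the second step: justifying that $e^{-itP}u_0$ stays in the weighted Sobolev space $\B^{s+2}$ so that all pseudodifferential operators in Lemma~\ref{lemma_LS_1} act on it in $L^2$. One clean way is a bootstrap/regularization: replace $E_s$ by $E_s^\ep:=E_s(1+\ep E_2)^{-N}$ (a regularized, $L^2$-bounded version), run the above computation unconditionally for $v_\ep(t)=E_s^\ep e^{-itP}u_0$ — noting that the commutator bound $\norm{[E_s^\ep,P]E_s^{-\ep}}_{L^2\to L^2}\lesssim1$ holds uniformly in $\ep\in(0,1]$ by the same symbolic calculus used in Lemma~\ref{lemma_LS_1} — obtain $\norm{v_\ep(t)}_{L^2}\le C_se^{C_s|t|}\norm{E_s^\ep u_0}_{L^2}$, and then let $\ep\to0$ using the monotone/dominated convergence of $E_s^\ep u_0\to E_su_0$ in $L^2$ on the dense subspace. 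The final estimate then extends to all $u_0\in \B^s$ by density, proving the lemma.
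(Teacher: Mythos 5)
Your proof is correct and is essentially the same as the paper's: set $v(t)=E_se^{-itP}u_0$, use Lemma~\ref{lemma_LS_1} to write $i\partial_t v=(P+B_s)v$, differentiate $\norm{v(t)}_{L^2}^2$ to isolate $-i\<(B_s-B_s^*)v,v\>$, and apply Gronwall. The paper does exactly this (reducing to $t\ge0$ by time reversal) without spelling out the domain/regularization justification you add at the end; that extra bootstrap is a reasonable way to make the formal computation rigorous, but it is not a different route.
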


\begin{proof}
We may assume $t\ge0$ without loss of generality. Set $v(t)=E_se^{-itP}u_0$ and compute
\begin{align*}
\frac{d}{dt}\norm{v(t)}_{L^2}^2
&=\<-i(P+B_s)v(t),v(t)\>+\<v(t),-i(P+B_s)v(t)\>\\
&=-i\<(B_s-B_s^*)v(t),v(t)\>.
\end{align*}
By virtue of the previous lemma, we have 
$
\frac{d}{dt}\norm{v(t)}_{L^2}\lesssim \norm{v(t)}_{L^2}.
$
The assertion then follows from Gronwall's inequality. 
\end{proof}

We now state the local smoothing effects for the propagator $e^{-itP}$. 
\begin{proposition}[The local smoothing effects \cite{RZ}]
\label{proposition_LS_3}
Assume Assumptions \ref{assumption_A} and \ref{assumption_C}. Then, for any $T>0$, $\nu>0$ and $s\in\R$, there exists $C_{T,\nu,s}>0$ such that
\begin{align}
\label{proposition_LS_3_1}
\norm{\<x\>^{-1/2-\nu}E_{s+1/m}e^{-itP}u_0}_{{L^2_TL^2}} \le C_{T,\nu,s} \norm{E_su_0}_{L^2}.
\end{align}
\end{proposition}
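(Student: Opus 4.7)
The plan is to apply the positive commutator (Mourre-type) method adapted to the weighted symbol class associated with the scale $\mathcal{B}^s$, following the strategy of Doi and Robbiano--Zuily.

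First I would reduce to the case $s=0$. Set $v(t) = E_s e^{-itP} u_0$; by Lemma \ref{lemma_LS_1}, $v$ satisfies $i\partial_t v = (P + B_s) v$ with $v(0) = E_s u_0$ and $B_s - B_s^*$ bounded on $L^2$. The composition formula \eqref{pdo_2} together with \eqref{LS_1} shows that $E_{s+1/m} - E_{1/m} E_s$ lies in a symbol class whose multiplication by $\langle x\rangle^{-1/2-\nu}$ is bounded on $L^2$; combined with Lemma \ref{lemma_LS_2} this reduces matters to proving
\begin{equation*}
\|\langle x\rangle^{-1/2-\nu} E_{1/m}\, e^{-it(P+B_s)} w_0\|_{L^2_T L^2} \lesssim \|w_0\|_{L^2},
\end{equation*}
and the bounded perturbation $B_s$ can be absorbed through Duhamel's formula.

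Second, I would construct a symbol $a \in S(\langle x\rangle^{1-2\nu}\, e_{2/m-2}, g)$ satisfying
\begin{equation*}
\{p, a\}(x,\xi) \ge c\, \langle x\rangle^{-1-2\nu}\, e_{2/m}(x,\xi) - r(x,\xi),
\end{equation*}
with a bounded remainder $r \in S(1,g)$. The construction uses Assumption \ref{assumption_C} in an essential way: starting from the escape function $f$, the convexity $H_k(H_k f) \ge c\, k$ together with the nontrapping of the $H_k$-flow produces, via integration along geodesics on each energy shell, a modified escape function $\tilde f$ with $H_p \tilde f \gtrsim (k + \langle x\rangle^m)^{1/2}\langle x\rangle^{-2\nu}$ outside a compact set of phase space. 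Multiplying by the appropriate power of $e_s$ and inserting a smooth energy cutoff then produces $a$.

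Third, with $A = a(x,D)$ the Heisenberg identity gives $\tfrac{d}{dt}\langle A u(t), u(t)\rangle = \langle i[P,A] u(t), u(t)\rangle$, and the sharp G\aa rding (or Fefferman--Phong) inequality promotes the symbolic bound above to the operator estimate $i[P,A] \ge c\, E_{1/m}^*\, \langle x\rangle^{-1-2\nu}\, E_{1/m} - C$. Integration over $[0,T]$, together with the $L^2$-boundedness of $A$ (after a harmless conjugation by a power of $\langle E_1\rangle$) and Lemma \ref{lemma_LS_2} to control $\|u(T)\|_{L^2}$, yields \eqref{proposition_LS_3_1}. The main obstacle is the symbol construction in step two: one needs a single symbol in the correct nonisotropic class $S(\langle x\rangle^{1-2\nu} e_{2/m-2}, g)$ whose Poisson bracket with the full superquadratic Hamiltonian $p = k_A + V + \ldots$ is globally positive of exactly the order $e_{2/m}\langle x\rangle^{-1-2\nu}$, using only the geodesic escape function; the delicate point is to verify that the potential contribution to $\{p,a\}$ cannot cancel the kinetic one in the transition region $|\xi|^2 \sim \langle x\rangle^m$, which is the technical heart of \cite{RZ}.
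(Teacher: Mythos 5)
Your overall strategy---conjugating by $E_s$, a Mourre--Doi positive commutator with an escape function taken from \cite{RZ}, and the sharp G\aa rding inequality---is the same one the paper follows, and your Step 3 is essentially the paper's $N(t)=\langle(M+\Lambda^w)v,v\rangle$ argument. However, your reduction in Step 1 contains a genuine gap. You describe $B_s$ as a ``bounded perturbation'' that ``can be absorbed through Duhamel's formula,'' but $B_s$ is \emph{not} bounded on $L^2$: from the proof of Lemma \ref{lemma_LS_1} its symbol is $e_{s-2}\tilde e_{-s}(a_0+a_1)+a_2$ with $a_0\in S(\langle x\rangle^{m-1}\langle\xi\rangle,g)$ and $a_1=O(\langle x\rangle^{3m/2-1})$, which after multiplying by $e_{-2}\approx(|\xi|^2+\langle x\rangle^m)^{-1}$ is of size $\langle x\rangle^{m/2-1}$---unbounded as soon as $m>2$. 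Lemma \ref{lemma_LS_1} asserts boundedness only of the antisymmetric combination $B_s-B_s^*$. A Duhamel expansion of $e^{-it(P+B_s)}$ in powers of $B_s$ therefore does not close, and your reduction to the $s=0$ smoothing estimate fails as written.

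The paper avoids this by never reducing to $s=0$: it works directly with $v(t)=E_se^{-itP}u_0$, which satisfies $i\partial_t v=(P+B_s)v$, and differentiates $N(t)=\langle(M+\Lambda^w)v,v\rangle$. Because the generator appears once as $-i(P+B_s)$ and once as $+i(P+B_s)^*$, the unbounded symmetric part of $B_s$ cancels, and only the bounded operators $B_s-B_s^*$ and $\Lambda^wB_s-B_s^*\Lambda^w=[\Lambda^w,B_s]-(B_s^*-B_s)\Lambda^w$ remain (the boundedness of the latter needs a short additional symbol computation, given in the proof of Proposition \ref{proposition_LS_3}). You should replace your Duhamel step by this cancellation mechanism. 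A secondary remark: your sketched escape function $a\in S(\langle x\rangle^{1-2\nu}e_{2/m-2},g)$ is claimed rather than constructed and does not match the class $S(1,dx^2/\langle x\rangle^2+d\xi^2/e_1^2)$ with support in $\{\langle x\rangle^{m/2}\le C_0|\xi|\}$ that the paper quotes from \cite{RZ}; since both you and the paper defer the construction to \cite{RZ}, this is not by itself a gap, but you should be careful to quote the same bounded symbol and support property, since the boundedness of $\Lambda^w$ is used when bounding $N(t)$ by $\|v\|_{L^2}^2$.
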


\begin{proof}
By time reversal invariance, we may replace the time interval $[-T,T]$ by $[0,T]$ without loss of generality. 
Robbiano-Zuily \cite{RZ} proved the case when $s=0$ only. However, by virtue of Lemmas \ref{lemma_LS_1} and \ref{lemma_LS_2}, general cases can be verified by an essentially same argument. 
Indeed, it was shown in \cite{RZ} that there exists a real valued symbol $\lambda \in S(1,dx^2/\<x\>^{2}+d\xi^2/e_1^2)$ supported in $\{(x,\xi);\<x\>^{m/2}\le C_0 |\xi|\}$, with some $C_0>0$ depending only on $c$ defined in Assumption \ref{assumption_A}, such that, for any $\nu>0$, 
\begin{align}
\label{proof_proposition_LS_3_1}
\{k,\lambda\}(x,\xi)\ge C^{-1}\<x\>^{-1-\nu}e_{1/m}(x,\xi)-C\quad\text{on}\ \R^{2d}
\end{align}
with some $C>0$ depending on $\nu$. Furthermore, 
$
i[P,\Lambda^w]-\{k,\lambda\}^w(x,D)
$ 
is bounded on $L^2$, where $\Lambda^w=\lambda^w(x,D)$ and $a^w(x,D)$ are Weyl quantizations of $\lambda$ and $a$, respectively, see \cite{Martinez} for the definition of the Weyl quantization.  
Setting $v(t)=E_se^{-itP}u_0$ and $N(t)=\<(M+\Lambda^w)v(t),v(t)\>$ with $M=1+\sup|\lambda|$, we have 
\begin{align*}
\frac{d}{dt}N(t)=\<i[P,\Lambda^w]v(t),v(t)\>+\<(B_s-B_s^*)v(t),v(t)\>+\<(\Lambda^wB_s-B_s^*\Lambda^w)v(t),v(t)\>.
\end{align*}
By \eqref{proof_proposition_LS_3_1} and the sharp G\r{a}rding inequality, we obtain
$$
\<i[P,\Lambda^w]v(t),v(t)\>\le -C^{-1}\norm{\<x\>^{-1/2-\nu/2}E_{1/m}v(t)}_{L^2}^2+C\norm{v(t)}_{L^2}^2.
$$
On the other hand, since $\partial_x^\alpha\partial_\xi^\beta \lambda=O(\<x\>^{-|\alpha|}e_{-|\beta|})$, a similar argument as in Lemma \ref{lemma_LS_1} shows that
$
\Lambda^wB_s-B_s^*\Lambda^w=[\Lambda^w,B_s]-(B_s^*-B_s)\Lambda^w
$
is bounded on $L^2$. Therefore, we conclude
\begin{align*}
\frac{d}{dt}N(t)\le -C^{-1}\norm{\<x\>^{-1/2-\nu/2}E_{1/m}v(t)}_{L^2}^2+CN(t)
\end{align*}
since $N(t)\approx \norm{v(t)}_{L^2}^2$. Applying Gronwall's inequality, we obtain the assertion.
\end{proof}


\begin{remark}
Assumption \ref{assumption_C} is only needed for Proposition \ref{proposition_LS_3}.

We also note that if $s=0$ then the estimate \eqref{proposition_LS_3_1} is equivalent to the usual local smoothing effect:
\begin{align}
\label{remark_smoothing_1}
\norm{\<x\>^{-1/2-\nu}\<D\>^{1/m}e^{-itP}u_0}_{L^2_TL^2}\le C_{T,\nu}\norm{u_0}_{L^2},\quad\nu>0. 
\end{align}
Indeed, since 
$$
C_m^{-1}(1+|\xi|^{1/m}+|x|^{1/2})\le (k_A(x,\xi)+\<x\>^m+L(0))^{\frac{1}{2m}}\le C_m(1+|\xi|^{1/m}+|x|^{1/2})
$$
with some $C_m$, we have
\begin{align*}
\<x\>^{-1/2-\nu}(k_A(x,\xi)+\<x\>^m+L(0))^{\frac{1}{2m}}&\le C_m(\<x\>^{-1/2-\nu}\<\xi\>^{1/m}+1),\\
\<x\>^{-1/2-\nu}(k_A(x,\xi)+\<x\>^m+L(0))^{\frac{1}{2m}}&\ge C_m^{-1}\<x\>^{-1/2-\nu}\<\xi\>^{1/m}.
\end{align*}
$\<x\>^{-1/2-\nu}E_{1/m}(\<x\>^{-1/2-\nu}\<D\>^{1/m}+1)^{-1}$ and $\<x\>^{-1/2-\nu}\<D\>^{1/m}(\<x\>^{-1/2-\nu}E_{1/m})^{-1}$ thus are bounded on $L^2$. (Note that, though the above estimates provide the $L^2$-boundedness of the principal term of these operators only, the lower order can be treated similarly.)
In particular, 
\begin{align*}
\norm{\<x\>^{-1/2-\nu}E_{1/m}v}_{L^2}&\le C_m(\norm{\<x\>^{-1/2-\nu}\<D\>^{1/2}v}_{L^2}+\norm{v}_{L^2}),\\
\norm{\<x\>^{-1/2-\nu}E_{1/m}v}_{L^2}&\ge C_m^{-1}\norm{\<x\>^{-1/2-\nu}\<D\>^{1/2}v}_{L^2},\end{align*}
which imply the equivalence between \eqref{proposition_LS_3_1} and \eqref{remark_smoothing_1}. 
\end{remark}


\section{Parametrix construction}
\label{parametrix}
Write 
$
\Gamma^h(L):=\{(x,\xi);\ |\xi|^2+h^2\<x\>^m< L\}
$, 
where $L\ge1$ is a fixed constant so large that $\supp \Psi_k^h\subset\Gamma^h(L)$, $k=0,1$. 
This section is devoted to construct the parametrices of propagators, localized in this energy shell, in terms of the semiclassical Fourier integral operator ($h$-FIO for short). 

\subsection{Classical mechanics}
\label{classical_mechanics}
This subsection discusses the flow generated by $p^h$, that is the solution to the Hamilton system:
\begin{equation}
\left\{
\begin{aligned}
\label{hamilton_system}
\dot{X}_j&=\frac{\partial p^h}{\partial \xi_j}(X,\Xi)= g^{jk}(X)(\Xi_k-hA_k(X)),\\
\dot{\Xi}_j&=-\frac{\partial p^h}{\partial x_j}(X,\Xi)
=-\frac12 \frac{\partial g^{kl}}{\partial x_j}(X)(\Xi_k-hA_k(X))(\Xi_l-hA_l(X))\\
&\quad\quad\quad\quad\quad\quad\quad\ \ +hg^{kl}(X)\frac{\partial A_k}{\partial x_j}(X)(\Xi_l-hA_l(X))
-h^2\frac{\partial V}{\partial x_j}(X),
\end{aligned}
\right.
\end{equation}
with the initial condition
$
(X(0,x,\xi),\Xi(0,x,\xi))=(x,\xi)\in\Gamma^h(L),
$
where $\dot{f}=\partial_t f$. We here suppress the $h$-dependence of the flow for simplicity. 

We first show that the flow is well-defined for $|t|\le \delta h^{-2/m}$ and $(x,\xi)\in\Gamma^h(L)$ with sufficiently small $\delta>0$, which is not obvious since the potential $V$ can blow up in the negative direction like $V(x)\le -C\<x\>^m$. More precisely, we have the following rough a priori bound:

\begin{lemma}
\label{lemma_A_0}
For sufficiently small $\delta_0>0$ there exists $C=C(m,\delta_0)>0$ such that 
$$	
|\Xi(t,x,\xi)|^2+h^2\<X(t,x,\xi)\>^m\le CL,
$$
uniformly in $h\in(0,1]$ and $(t,x,\xi)\in[-\delta_0 h^{-2/m},\delta_0 h^{-2/m}]\times\Gamma^h(L)$. 
\end{lemma}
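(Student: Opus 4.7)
The plan is to combine conservation of the Hamiltonian $p^h$ along the flow with a continuity (bootstrap) argument. Because the Hamiltonian vector field of $p^h$ is smooth, the flow exists on a maximal interval $(-T_-,T_+)\ni 0$, and it suffices to establish the a priori bound on $[0,T_+)\cap[0,\delta_0 h^{-2/m}]$; negative times follow by time reversal, and a standard blow-up-at-the-boundary argument then forces $T_+>\delta_0 h^{-2/m}$.

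From the symbol bound \eqref{symbols_2} with $\alpha=\beta=0$ one has $|p^h(x,\xi)|\lesssim|\xi|^2+h^2\<x\>^m\lesssim L$ on $\Gamma^h(L)$, and conservation $p^h(X(t),\Xi(t))=p^h(x,\xi)$ combined with uniform ellipticity of $(g^{jk})$ and $|h^2 V(X)|\lesssim h^2\<X\>^m$ yields
\[
|\Xi(t)-hA(X(t))|^2 \lesssim L + h^2\<X(t)\>^m.
\]
The first Hamilton equation and ellipticity then give $|\dot X(t)|\lesssim\sqrt{L+h^2\<X(t)\>^m}$. Next I would run a bootstrap on $h^2\<X(t)\>^m$: let $T^*$ be the supremum of those $t\in[0,T_+)$ such that $h^2\<X(s)\>^m\le KL$ for all $s\in[0,t]$, where $K$ is a constant to be fixed. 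Using $\<x\>\le (L/h^2)^{1/m}$ on $\Gamma^h(L)$ and integrating,
\[
\<X(t)\>\le h^{-2/m}L^{1/m}+C t\sqrt{(K+1)L},\quad 0\le t\le\min(T^*,\delta_0 h^{-2/m}).
\]
Raising to the $m$-th power via $(a+b)^m\le 2^{m-1}(a^m+b^m)$, multiplying by $h^2$, and noting that $h^2 t^m\le\delta_0^m$ on this interval, one obtains
\[
h^2\<X(t)\>^m \le 2^{m-1}L + 2^{m-1}C^m \delta_0^m L^{m/2}(K+1)^{m/2}.
\]
Choosing $K=2^m$ makes the first term equal to $KL/2$, and then fixing $\delta_0$ small enough (depending on $m$ and on the fixed constant $L$) forces the right-hand side to be strictly less than $KL$, closing the bootstrap and giving $T^*\ge\delta_0 h^{-2/m}$. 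The companion bound $|\Xi(t)|^2\le 2|\Xi(t)-hA(X(t))|^2+2h^2|A(X(t))|^2\lesssim L+h^2\<X(t)\>^m\lesssim L$ then completes the proof.

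The main obstacle is precisely the lack of a lower bound on $V$: since $V$ may be as negative as $-C\<x\>^m$, conservation of $p^h$ alone does not control $|\Xi|^2+h^2\<X\>^m$, and unchecked the trajectory could in principle reach infinite speed in finite time (as in the $-\Delta-|x|^4$ example mentioned in the introduction). The bootstrap circumvents this by exploiting the short time window $|t|\le\delta_0 h^{-2/m}$, which prevents $X(t)$ from escaping the natural spatial scale $\<X\>\lesssim h^{-2/m}$ set by the initial data and thereby keeps $h^2\<X(t)\>^m$ of order one.
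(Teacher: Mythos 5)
Your bootstrap argument is correct and yields the same conclusion, but it takes a genuinely different route from the paper. The paper introduces truncated data $\wtilde V=\rho(h^{2/m}x/(4L^{1/m}))V$, $\wtilde A_j$, and the corresponding symbol $\wtilde p^h$, which satisfy $h^2|\partial_x^\alpha\wtilde V|+h^2|\partial_x^\alpha\wtilde A_j|^2\lesssim h^{2|\alpha|/m}L^{1-|\alpha|/m}$ \emph{globally} on $\R^d$; the truncated Hamilton system therefore has a uniformly bounded vector field, its flow $(\wtilde X,\wtilde\Xi)$ exists for all time, and energy conservation gives $|\wtilde\Xi(t)|\lesssim L^{1/2}$ and $|\dot{\wtilde X}(t)|\lesssim L^{1/2}$ unconditionally, with no continuity argument needed. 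Integrating over $|t|\le\delta_0 h^{-2/m}$ then keeps $\wtilde X(t)$ inside the region $h^{2/m}|x|<4^m L$ where $\wtilde p^h\equiv p^h$, and ODE uniqueness identifies $(X,\Xi)$ with $(\wtilde X,\wtilde\Xi)$ on that window. Your version instead keeps the original $p^h$ and runs a continuity/bootstrap: assuming $h^2\<X(s)\>^m\le KL$ up to $T^*$, you derive the strict improvement from conservation, ellipticity, and the scaling $h^{2/m}t\le\delta_0$, and handle existence of the flow via a separate blow-up argument at the maximal time. Both proofs exploit the same mechanism — the time window confines $\<X(t)\>$ to the scale $h^{-2/m}$, on which $h^2V$ and $hA$ are $O(1)$ — and give equally sharp conclusions; the truncation approach has the side benefit that $\wtilde p^h$ is reused later as a globally defined reference symbol in \eqref{theorem_WKB_1_2} and in Theorem \ref{theorem_WKB_2}, which your more local bootstrap argument does not supply, but for the statement of this lemma the two routes are interchangeable.
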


\begin{proof}
Let $\rho\in C_0^\infty(\R^d)$ be such that $\rho(x)=1$ for $|x|\le 1$ and $\rho(x)=0$ for $|x|\ge2$ and set
\begin{equation}
\begin{aligned}
\label{proof_lemma_A_0_1}
\wtilde V(x)&=\rho\left(\frac{h^{2/m}x}{4L^{1/m}}\right)V(x),\quad 
\wtilde A_{j}(x)=\rho\left(\frac{h^{2/m}x}{4L^{1/m}}\right)A_j(x),\\
\wtilde p^h(x,\xi)&=\frac12g^{jk}(\xi_j-h\wtilde A_j(x))(\xi_k-h\wtilde A_k(x))+h^2\wtilde V(x).
\end{aligned}
\end{equation}
Note that 
\begin{align}
\label{proof_lemma_A_0_2}
\wtilde V=V,\quad\wtilde A=A,\quad\wtilde p^h=p^h\quad\text{if}\quad h^{2/m}|x|< 4^mL
\end{align} 
and that, by Assumption \ref{assumption_A},
$$
h^2|\partial_x^\alpha\wtilde V(x)|+h^2|\partial_x^\alpha\wtilde A_j(x)|^2\le C_\alpha h^{2|\alpha|/m}L^{1-|\alpha|/m},\quad x\in \R^d.
$$
Consider the corresponding Hamilton flow $(\wtilde X(t,x,\xi),\wtilde \Xi(t,x,\xi))$, that is the solution to 
$$
\dot {\wtilde X}=(\partial_\xi \wtilde p^h)(\wtilde X,\wtilde \Xi),\ 
\dot {\wtilde \Xi}=-(\partial_x \wtilde p^h)(\wtilde X,\wtilde \Xi);\quad
(\wtilde X,\wtilde \Xi)|_{t=0}=(x,\xi). 
$$
Since the flow conserves the energy, \emph{i.e.}, $\wtilde p^h(\wtilde X(t),\wtilde \Xi(t))=\wtilde p^h(x,\xi)$, we learn by the ellipticity of $g^{jk}$ that if $(x,\xi)\in\Gamma^h(L)$, then
\begin{align*}
|\wtilde \Xi(t)|^2 
&\le |\wtilde \Xi(t)-h\wtilde A(\wtilde X(t))|^2+|h\wtilde A(\wtilde X(t))|^2\\
&\lesssim \wtilde p^h(x,\xi)+h^2|\wtilde V(\wtilde X(t))|+|h\wtilde A(\wtilde X(t))|^2\\
&\le C_0L, 
\end{align*}
and hence
$
|\dot {\wtilde X}(t)|\lesssim |\wtilde\Xi(t)-h\wtilde A(\wtilde X(t))|\le C_1 L^{1/2}$ with some constants $C_0,C_1>0$ independent of $L$. In particular, 
$$
h^{2/m}|\wtilde X(t)-x|\le C_1L^{1/2}h^{2/m}|t|\le C_1L^{1/2}\delta_0,\ (x,\xi)\in\Gamma^h(L),\ |t|\le\delta_0 h^{-2/m}.
$$
Therefore, for any fixed $m,L$, taking $0<\delta_0<2^{-1/m}C_1^{-1}L^{-1/2+1/m}$ we have
$$
h^{2/m}|\wtilde X(t)|\le h^{2/m}|x|+C_1L^{1/2}\delta_0\le 2L^{1/m}
$$
on $[-\delta_0 h^{-2/m},\delta_0 h^{-2/m}]\times \Gamma^h(L)$. By virtue of this bound and \eqref{proof_lemma_A_0_2} we have that $(\wtilde X(t,x,\xi),\wtilde \Xi(t,x,\xi))$ is a solution to \eqref{hamilton_system} with the initial data $(x,\xi)$. Then the uniqueness theorem of first order ODE shows 
$$
(X,\Xi)\equiv(\wtilde X,\wtilde \Xi)\quad\text{on}\quad[-\delta_0 h^{-2/m},\delta_0 h^{-2/m}]\times\Gamma^h(L).
$$
In particular, $(X,\Xi)$ is well defined on $[-\delta_0 h^{-2/m},\delta_0 h^{-2/m}]\times\Gamma^h(L)$ and the above computations yield the desired bound.  
\end{proof}

We next study more precise behavior of the flow. Set 
$$
\Omega^h(R,L):=\{|x|>R\}\,\cap\,\Gamma^h(L).
$$
Note that $\Omega^h(R,L)\Subset \Omega^h(R',L')$ if $R>R'$ and $L<L'$. 

\begin{lemma}[General case]			
\label{lemma_A_3}
For sufficiently small $0<\delta< \delta_0$, the followings are satisfied:\\
\emph{(1)} For any $h\in(0,1]$, $1\le R\le h^{-2/m}$, $(t,x,\xi)\in[-\delta R,\delta R]\times\Omega^h(R,L)$,
\begin{align}
\label{lemma_A_3_1}
|X(t)-x|+\<x\>|\Xi(t)-\xi|&\le C |t|,\\
\label{lemma_A_3_2}
|\dderiv{x}{\xi}{\alpha}{\beta}(X(t)-x)|+\<x\>|\dderiv{x}{\xi}{\alpha}{\beta}(
\Xi(t)-\xi)|
&\le C_{\alpha\beta}\<x\>^{-|\alpha|}|t|,\quad |\alpha+\beta|\ge1,
\end{align}
where constants $C,C_{\alpha\beta}>0$ may be taken uniformly in $h,R$ and $t$. Moreover, for fixed $h\in(0,1]$, $1\le R\le h^{-2/m}$ and $|t|\le \delta R $, the map 
$
\Lambda(t):(x,\xi) \mapsto (X(t,x,\xi),\xi)
$ 
is diffeomorphic from $\Omega^h(R/2,2L)$ onto its range and satisfies
\begin{align}
\label{lemma_A_3_3}
\Omega^h(R,L)\subset\Lambda(t,\Omega^h(R/2,2L))\subset \Omega^h(R/3,3L),\quad h\in(0,1],\ |t|\le \delta R.
\end{align}
\emph{(2)} If $(Y(t,x,\xi),\xi)$ denotes the inverse map of $\Lambda(t)$, then bounds \eqref{lemma_A_3_1} and \eqref{lemma_A_3_2} still hold with $X(t)$ replaced by $Y(t)$ for  $(t,x,\xi)\in[-\delta R,\delta R]\times\Omega^h(R,L)$. \\
\emph{(3)} The same conclusions also hold with $R=1$ and with $\Omega^h(R,L)$ replaced by $\Gamma^h(L)$, i.e., $X(t)$ and $Y(t)$ satisfy \eqref{lemma_A_3_1} and \eqref{lemma_A_3_2} uniformly in $h\in(0,1]$ and $(t,x,\xi)\in [-\delta,\delta]\times\Gamma^h(L)$.
\end{lemma}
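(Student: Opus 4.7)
\emph{Setup.} The plan is to combine Lemma \ref{lemma_A_0} with the symbol estimates \eqref{symbols_2} to obtain the flow-side bound
\[
|\dderiv{x}{\xi}{\alpha}{\beta}p^h(X(t),\Xi(t))|\lesssim \<X(t)\>^{-|\alpha|}\<\Xi(t)\>^{-|\beta|}
\]
uniformly in $h\in(0,1]$, $|t|\le\delta_0 h^{-2/m}$ and $(x,\xi)\in\Gamma^h(L)$, where implicit constants may depend on $L$. The proof then runs by a bootstrap for \eqref{lemma_A_3_1}, induction on $|\alpha+\beta|$ for \eqref{lemma_A_3_2}, and the inverse function theorem for the diffeomorphism statement; Part~(3) is obtained at the end by specializing to $R=1$ and treating $\{|x|\le 1\}\cap\Gamma^h(L)$ separately.

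\emph{Zeroth order bound.} Since $|\dot X|=|\partial_\xi p^h(X,\Xi)|\lesssim 1$, integration yields $|X(t)-x|\le C|t|\le C\delta R$ on $[-\delta R,\delta R]$; choosing $\delta$ sufficiently small ensures $|X(s)|\ge R/2$ and $\<X(s)\>\approx\<x\>$ for all $s$ between $0$ and $t$. Plugging this back into $|\dot\Xi|=|\partial_x p^h(X,\Xi)|\lesssim \<X(s)\>^{-1}\lesssim\<x\>^{-1}$ gives $|\Xi(t)-\xi|\lesssim\<x\>^{-1}|t|$, which is \eqref{lemma_A_3_1}.

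\emph{Higher derivatives.} Differentiating \eqref{hamilton_system} once in $(x,\xi)$ produces the linearized system
\[
\partial_t \begin{pmatrix}\partial_\cdot X\\ \partial_\cdot\Xi\end{pmatrix}=\begin{pmatrix} (\partial_x\partial_\xi p^h)(X,\Xi) & (\partial_\xi^2 p^h)(X,\Xi)\\ -(\partial_x^2 p^h)(X,\Xi) & -(\partial_x\partial_\xi p^h)(X,\Xi) \end{pmatrix}\begin{pmatrix}\partial_\cdot X\\ \partial_\cdot\Xi\end{pmatrix},
\]
whose coefficient blocks are of size $O(\<x\>^{-1})$, $O(1)$, $O(\<x\>^{-2})$, $O(\<x\>^{-1})$ respectively, thanks to $\<X(s)\>\approx\<x\>\gtrsim R$. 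After the anisotropic rescaling $(U,V):=(\partial_\cdot X,\<x\>\partial_\cdot\Xi)$ the matrix becomes uniformly of size $O(\<x\>^{-1})=O(R^{-1})$, so a Gronwall argument on $|t|\le\delta R$ gives \eqref{lemma_A_3_2} for $|\alpha+\beta|=1$. Induction on $|\alpha+\beta|$ handles higher orders: each further differentiation reproduces the same principal operator acting on $\dderiv{x}{\xi}{\alpha}{\beta}(X,\Xi)$, plus a forcing term assembled from $\partial_x^{\alpha'}\partial_\xi^{\beta'}p^h$ (with $|\alpha'|\ge 1$ contributing the needed $\<x\>^{-1}$ factor per $x$-derivative) composed with strictly lower-order derivatives of $(X,\Xi)$, which have the required decay by the inductive hypothesis.

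\emph{Diffeomorphism, inverse, Part~(3), and main obstacle.} The $|\alpha+\beta|=1$ case of \eqref{lemma_A_3_2} yields $\partial_x X(t,\cdot,\xi)=I+O(\delta)$ and $\partial_\xi X(t,\cdot,\xi)=O(|t|)$, so by the inverse function theorem $\Lambda(t)$ is a diffeomorphism from $\Omega^h(R/2,2L)$ onto its image. The second inclusion of \eqref{lemma_A_3_3} follows directly from \eqref{lemma_A_3_1} together with Lemma \ref{lemma_A_0} (controlling the energy component via the conservation law $p^h(X,\Xi)=p^h(x,\xi)$); the first follows because, after possibly shrinking $\delta$, the image of $\Lambda(t)|_{\Omega^h(R/2,2L)}$ contains $\Omega^h(R,L)$, again by the small-distortion bound. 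Bounds on $Y$ are obtained by differentiating $X(t,Y(t,x,\xi),\xi)=x$ and inverting $\partial_z X|_{(Y,\xi)}$ by Neumann series, inheriting the estimates \eqref{lemma_A_3_1}--\eqref{lemma_A_3_2} at $(Y,\xi)\in\Omega^h(R/2,2L)$. Part~(3) is then immediate: on $\Omega^h(1,L)$ it is Part~(1) with $R=1$, while on $\{|x|\le 1\}\cap\Gamma^h(L)$ the estimates with $\<x\>\approx 1$ reduce to standard short-time Gronwall bounds on a Hamilton system with uniformly bounded coefficients. The main difficulty is maintaining Gronwall constants uniformly across a time window $|t|\le\delta R$ that can reach $\delta h^{-2/m}$; this succeeds precisely because each $x$-derivative of $p^h$ contributes an extra $\<x\>^{-1}\approx R^{-1}$ that exactly cancels the length of the interval, and the rescaling $\Xi\mapsto\<x\>\Xi$ is what exposes this cancellation.
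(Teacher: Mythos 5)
Your proposal is essentially the paper's own proof: the zeroth-order bound by bootstrapping $|X(t)-x|\le C|t|$ to conclude $\<X(s)\>\approx\<x\>$ and then $|\Xi(t)-\xi|\lesssim\<x\>^{-1}|t|$, the anisotropic rescaling $(\partial X,\<x\>\partial\Xi)$ making the linearized Hamilton matrix $O(\<x\>^{-1})=O(R^{-1})$ so that Gronwall over $|t|\le\delta R$ gives uniform constants, induction on $|\alpha+\beta|$ for higher orders, and inverting $\Lambda(t)$ via a small-distortion argument (the paper invokes Hadamard's global inverse mapping theorem here, where you write ``inverse function theorem'' plus small-distortion; the intent is the same) followed by differentiating $x=X(t,Y(t,x,\xi),\xi)$ to pass the bounds to $Y$. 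Part (3) is handled in both by the same reasoning specialized to $R=1$ with the bounded-$|x|$ region being elementary.
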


\begin{proof}
We prove the statements (1) and (2) only since the proof of (3) being similar. Suppose that $|t|\le \delta R$ and $(x,\xi)\in \Omega^h(R,L)$. 
By the Hamilton system and Lemma \ref{lemma_A_0},
\begin{align}
\label{proof_lemma_A_3_1}
|\dot{X}(t)|\lesssim 1,\quad
|\dot{\Xi}(t)|\lesssim \<X\>^{-1}+h\<X(t)\>^{m/2-1}+h^2\<X(t)\>^{m-1}\lesssim \<X\>^{-1},
\end{align}
Integrating the first estimate over $-\delta R\le t\le \delta R$, we have
$$
|X(t)-x|\le C|t|\le C\delta R. 
$$
Therefore, taking $\delta>0$ so small that $C\delta<1/2$ we see that
$$
|X(t)|\ge |x|-C\delta R>|x|/2>R/2
$$ 
for $(t,x,\xi)\in (-\delta R,\delta R)\times \Omega^h(R,L)$ which, together with the second estimate of \eqref{proof_lemma_A_3_1}, implies \eqref{lemma_A_1}. 

We next let $|\alpha+\beta|=1$ and differentiate the Hamilton system
$$
\left(\begin{matrix}
\dderiv{x}{\xi}{\alpha}{\beta} \dot X\\
\<x\>\dderiv{x}{\xi}{\alpha}{\beta}  \dot \Xi
\end{matrix}\right)
=
\left(\begin{matrix}
\partial_{x}\partial_\xi p^h & \<x\>^{-1}\partial_{\xi}^2p^h\\
-\<x\>\partial_{x}^2p^h & -\partial_{\xi}\partial_{x}p^h
\end{matrix}\right)
\left(\begin{matrix}
\dderiv{x}{\xi}{\alpha}{\beta} X\\
\<x\>\dderiv{x}{\xi}{\alpha}{\beta} \Xi
\end{matrix}\right)
=
O(\<x\>^{-1})\left(\begin{matrix}
\dderiv{x}{\xi}{\alpha}{\beta} X\\
\<x\>\dderiv{x}{\xi}{\alpha}{\beta} \Xi
\end{matrix}\right)
$$
where we have used the fact that
\begin{align*}
|(\partial_x^2p^h)(X(t),\Xi(t))|
&\lesssim \<x\>^{-2},\quad
|(\partial_\xi^2p^h)(X(t),\Xi(t))|\lesssim1,\\
|(\partial_x\partial_\xi p^h)(X(t),\Xi(t))|&\lesssim \<x\>^{-1}
\end{align*}
on $(-\delta R,\delta R)\times \Omega^h(R,L)$. These bound follow from \eqref{symbols_2} and the bound $X(t)\ge |x|/2\ge \<x\>/4$ on $(-\delta R,\delta R)\times \Omega^h(R,L)$. 
The estimate \eqref{lemma_A_3_2} with $|\alpha+\beta|=1$ then follows from Gronwall's inequality. 

Proofs for higher derivatives follow from an induction on $|\alpha+\beta|$. The inclusion relation \eqref{lemma_A_3_3} and the existence of the inverse of $\Lambda^h(t)$ are verified by a standard argument based on the Hadamard global inverse mapping theorem 

The estimates for $Y(t)$ are verified by differentiating the equality 
$$
x=X(t,Y(t,x,\xi)
$$ 
and  using the estimates for $X(t)$. We refer to \cite[Lemmas A.2, A.4 and A.5]{Mizutani2}  for the details of the proof (see also the proof of the next lemma). 
\end{proof}

For the flat case, we have the following stronger bounds than that in the previous lemma:
\begin{lemma}[Flat case]
\label{lemma_A_1}
Assume that $g^{jk}\equiv\delta_{jk}$ and that either $m\ge4$ or Assumption \ref{assumption_B}. 
Then, for sufficiently small $0<\delta<\delta_0$, the following statements hold:\\
\emph{(1)} For any $(t,x,\xi)\in [-\delta h^{-2/m},\delta h^{-2/m}]\times\Gamma^h(L)$ and $\alpha,\beta\in \Z^{d}_+$, we have
\begin{equation}
\begin{aligned}
\label{lemma_A_1_1}
|\dderiv{x}{\xi}{\alpha}{\beta}(X(t)-x)|&\le C_{\alpha\beta}h^{(2/m)\min(|\alpha,1)}|t|,\\
|\dderiv{x}{\xi}{\alpha}{\beta}(\Xi(t)-\xi)|&\le C_{\alpha\beta}h^{2/m},
\end{aligned}
\end{equation}
where $C_{\alpha\beta}>0$ may be taken uniformly with respect to $x,h$ and $\delta$. \\
\emph{(2)} We denote by $(x,\xi)\mapsto(Y(t,x,\xi),\xi)$ the inverse map of $\Lambda(t)$. Then $Y(t,x,\xi)$ is well-defined for $(-\delta h^{-2/m},\delta h^{-2/m})\times\Gamma^h(L)$ and the bounds \eqref{lemma_A_1_1} still hold with $X(t)$ replaced by $Y(t)$. 
\end{lemma}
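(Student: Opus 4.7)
The proof will follow the blueprint of Lemma \ref{lemma_A_3} while exploiting two simplifications of the flat case: the identity $\partial_\xi^2 p^h\equiv I$, and the a priori confinement $|\Xi(t)|^2 + h^2\<X(t)\>^m \lesssim L$ from Lemma \ref{lemma_A_0}, which holds uniformly on $[-\delta_0 h^{-2/m},\delta_0 h^{-2/m}]\times \Gamma^h(L)$. My main technical tool will be a weighted Gronwall argument on the variational equations, with weight $w = h^{-2/m}$ chosen to balance the two off-diagonal blocks of the Hamiltonian Jacobian.

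First I would establish the $\alpha=\beta=0$ bounds directly. From Lemma \ref{lemma_A_0}, $|\dot X| = |\Xi - hA(X)| \lesssim 1$. The observation that $h\<X(t)\>^{m/2}$ and $h^2\<X(t)\>^m$ remain uniformly bounded on the flow converts the polynomial growth allowed by Assumption \ref{assumption_A} into sharp $h$-powers: $h|\partial A(X)| \lesssim h\<X\>^{m/2}\cdot\<X\>^{-1} \lesssim h^{2/m}$ and $h^2|\partial V(X)|\lesssim h^2\<X\>^m\cdot\<X\>^{-1}\lesssim h^{2/m}$. Hence $|\dot\Xi|\lesssim h^{2/m}$, and integration over $|t|\le \delta h^{-2/m}$ yields $|X-x|\lesssim |t|$ and $|\Xi-\xi|\lesssim h^{2/m}|t|$, the $\alpha=\beta=0$ case of the asserted bounds.

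For the higher-order estimates I would set up the variational equation for a first-order variation $(U,W) = (\dderiv{x}{\xi}{\alpha}{\beta}X, \dderiv{x}{\xi}{\alpha}{\beta}\Xi)$ and control the Jacobian $M(t)$ of the Hamilton vector field at $(X(t),\Xi(t))$. The same size considerations as above yield $|\partial_x\partial_\xi p^h|+|\partial_\xi\partial_x p^h|\lesssim h^{2/m}$, $\partial_\xi^2 p^h = I$, and $|\partial_x^2 p^h|\lesssim h^{4/m}$, so with $\Phi(t) := |U(t)| + h^{-2/m}|W(t)|$ a direct computation gives $\dot\Phi \lesssim h^{2/m}\Phi$. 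Gronwall then yields $\Phi(t)\le e^{C\delta}\Phi(0)$. Taking the two sets of initial data ($\partial_x$ or $\partial_\xi$) separately produces the claimed first-order bounds; higher orders follow by induction on $|\alpha|+|\beta|$, differentiating the Hamilton system and applying the same weighted Gronwall to the linear equation whose inhomogeneity comes, via Fa\`a di Bruno, from lower-order variations and from higher derivatives of $p^h$, each dominated by the same $h$-power. Part (2) of the lemma then follows because $\partial_x X(t) = I + O(\delta)$ is invertible for $\delta$ small, so the Hadamard inverse function theorem yields $Y(t)$, and implicit differentiation of $X(t,Y(t,x,\xi),\xi) = x$ transfers the bounds from $X$ to $Y$.

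The main obstacle will be the estimate $|\partial_x^2 p^h|\lesssim h^{4/m}$ when $2\le m<4$: the na\"ive bound for the term $h\,\partial^2 A(X)\cdot(\Xi - hA(X))$ is only $h\<X\>^{m/2-2}\lesssim h$, which is too weak to close the weighted Gronwall (a weight $w = h^{-\alpha}$ would need $2/m\le \alpha\le 1-2/m$, an empty range when $m<4$). This is precisely where Assumption \ref{assumption_B} enters: re-expressing the Hamilton system in terms of the physical momentum $\Pi = \Xi - hA$, the equations become $\dot X = \Pi$ and $\dot \Pi = h B(X)\Pi - h^2\partial V(X)$, so only the magnetic field tensor $B$ and its derivatives, rather than bare derivatives of $A$, appear in the variational matrix. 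Assumption \ref{assumption_B} then forces $|h\partial^\alpha B(X)| \lesssim h^{4/m}$ on the flow for $|\alpha|\ge 1$, restoring the necessary block bound and closing the argument uniformly in $h\in(0,1]$.
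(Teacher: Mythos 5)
Your reformulation in the physical-momentum variables $\Pi = \Xi - hA(X)$, so that only the magnetic tensor $B$ and not bare derivatives of $A$ enter the variational matrix, is exactly the paper's starting move (the paper writes $v$ for $\Pi$), and your weighted Gronwall with weight $h^{-2/m}$ is the right framework; the case $m\ge 4$ is handled correctly. However, the pivot of your argument for $2\le m<4$ — the claim that Assumption \ref{assumption_B} "forces $|h\partial^\alpha B(X)|\lesssim h^{4/m}$ on the flow for $|\alpha|\ge 1$" — is false. Assumption \ref{assumption_B} gives $|\partial^\alpha B(x)|\le C_\alpha\<x\>^{m/2-2-\mu}$, and for $2\le m<4$ the exponent $m/2-2-\mu$ is strictly negative, so the function $\<x\>^{m/2-2-\mu}$ is \emph{largest near the origin}, not at the edge of the confinement region $\<X\>\lesssim h^{-2/m}$. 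The best uniform pointwise bound one can extract is therefore $h|\partial^\alpha B(X)|\lesssim h$, and since $h^{4/m}<h$ when $m<4$, your asserted bound is strictly stronger than what is available. The weighted Gronwall then does not close: the $(2,1)$ block of the variational matrix is only $O(h^{1-2/m})$ pointwise, and integrated over $|I_h|\sim h^{-2/m}$ this gives $O(h^{1-4/m})\to\infty$ as $h\to 0$.

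This is precisely why the paper does not argue pointwise. The quantity actually needed (the paper's Lemma \ref{lemma_A_2}) is the \emph{time-integrated} bound $\int_{I_h}|h(\partial^\alpha B)(X(t))||v(t)|\,dt\le C_\alpha h^{2/m}$, and for $2\le m<4$ this is proved by a dichotomy on the trajectory. Either $\delta|v(t)|\le h(1+|X(t)|)^{m/2}$ for all $t\in I_h$, in which case the integrand is bounded pointwise by $h^2\<X\>^{m-2-\mu}\lesssim h^{4/m}$; or the speed exceeds this threshold at some time $t_0$, in which case one shows (using the Lagrange equations and a bootstrap) that $\partial_t^2|X(t)|^2\gtrsim|\eta|^2>0$, so $X(t)$ escapes along a convex trajectory with $\<X(t)\>\gtrsim 1+|t-r||\eta|$, and the factor $\<X(t)\>^{-1-\mu}$ from Assumption \ref{assumption_B} becomes time-integrable over $I_h$ with an $O(1)$ bound, yielding $O(h^{2/m})$ after multiplying by $\<X\>^{m/2-1}\lesssim h^{-1+2/m}$. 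This escape/confinement dichotomy is the actual content of Assumption \ref{assumption_B} in the range $2\le m<4$, and your proposal omits it entirely.
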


In order to prove this lemma we follow the same argument as that in \cite{Yajima2}. We first  introduce the velocity variables
$
v(t,x,\xi)=\Xi(t,x,\xi)-hA(X(t,x,\xi))
$. 
Then $(X(t),v(t))$ solves the following Lagrange equations:
\begin{align}
\label{proof_lemma_hamilton_flow_1_1}
\dot X=v,\ \dot v=hB(X)v-h^2\partial_x V(X);\quad
(X,v)|_{t=0}=(x,\xi-hA(x)),
\end{align}
where $B(x)=(B_{jk}(x))=(\partial_jA_k(x)-\partial_kA_j(x))$. By Lemma \ref{lemma_A_0}, 
\begin{align}
\label{lemma_A_2_0}
|v(t,x,\xi)|\le C_0L
\end{align} on $[-\delta h^{-2/m},\delta h^{-2/m}]\times\Gamma^h(L)$ with some $C_0$ independent of $\delta$ and $h$. The following lemma plays a crucial role.

\begin{lemma}							
\label{lemma_A_2}
Set $I_h=[-\delta h^{-2/m},\delta h^{-2/m}]$. Under the condition in Lemma \ref{lemma_A_1}
\begin{align}
\label{lemma_A_2_1}
|X(t)-x-t\xi-thA(x)|&\le Ch^{2/m}|t|^2,\\
\label{lemma_A_2_2}
|v(t)-\xi-hA(x)|&\le Ch^{2/m}|t|\\
\label{lemma_A_2_3}
\int_{I_h}|h(\partial_x^\alpha B)(X(t))||v(t)|dt&\le C_\alpha h^{2/m},\quad |\alpha|\ge1,
\end{align}
uniformly with respect to $(t,x,\xi)\in I_h\times \Gamma^h(L)$ and  $h\in(0,1]$. 
\end{lemma}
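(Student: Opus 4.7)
The plan is to derive all three estimates by direct integration of the Lagrange equations \eqref{proof_lemma_hamilton_flow_1_1}, bootstrapped off the crude a~priori bound supplied by Lemma~\ref{lemma_A_0}. The starting observation is that on $I_h\times\Gamma^h(L)$ one has simultaneously $|v(t)|\lesssim 1$ (from \eqref{lemma_A_2_0}) and $h\<X(t)\>^{m/2}\lesssim 1$ (rearranging Lemma~\ref{lemma_A_0}), with constants depending on $L$. Combined with Assumption~\ref{assumption_A} and a little algebra---writing $\<X\>^{m-1}$ and $\<X\>^{m/2-1}$ as fractional powers of $h\<X\>^{m/2}$ divided by powers of $h$---one obtains the pointwise bounds
\[
h^2|\partial_xV(X(t))|\lesssim h^{2/m},\qquad h|B(X(t))|\,|v(t)|\lesssim h^{2/m},
\]
uniformly on $I_h\times\Gamma^h(L)$. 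Plugging these into $\dot v=hB(X)v-h^2\partial_xV(X)$ and integrating in $t$ immediately yields \eqref{lemma_A_2_2}; integrating $\dot X=v$ once more, with $v(0)=\xi-hA(x)$ as initial datum, then produces \eqref{lemma_A_2_1}.

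For \eqref{lemma_A_2_3} I would split the argument by the value of $m$. For $m\ge 4$, the base bound $|\partial_x^\alpha B(x)|\lesssim\<x\>^{m/2-1-|\alpha|}$ following from Assumption~\ref{assumption_A} (via $B=dA$) already suffices: a direct substitution of $\<X(t)\>\lesssim h^{-2/m}$ and $|v|\lesssim 1$ into the integrand, combined with the length $|I_h|\asymp h^{-2/m}$ of the interval, produces exactly $h^{2/m}$. For $2\le m<4$ the same computation gives only the weaker $h^{1-2/m}$, and one must invoke Assumption~\ref{assumption_B} to upgrade the decay to $|\partial_x^\alpha B(x)|\lesssim\<x\>^{m/2-2-\mu}$ with $\mu>0$. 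I would exploit this extra decay by splitting according to whether $v(0)=\xi-hA(x)$ is macroscopic or near zero: in the first case, \eqref{lemma_A_2_2} ensures $|\dot X(t)|\gtrsim|v(0)|$ throughout $I_h$, so the time integral reduces to a line integral of $\<\cdot\>^{m/2-2-\mu}$ along a nearly straight trajectory of length $\lesssim h^{-2/m}$, which a direct computation bounds by $h^{2/m+2\mu/m}\lesssim h^{2/m}$; in the second case the trajectory stays close to $x$ on $I_h$, so $\<X(t)\>\approx\<x\>$ and the pointwise bound combined with $\<x\>\lesssim h^{-2/m}$ again gives $h^{2/m}$.

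The main technical obstacle is precisely this low-velocity regime of the last case: when $v(0)$ is near zero one cannot naively change variables to $du=|v|\,dt$, so the gain $\mu>0$ from Assumption~\ref{assumption_B} must be harvested directly through the size estimate $\<x\>^{m/2-2-\mu}\lesssim h^{-(2/m)(m/2-2-\mu)}$. The interpolation between the \emph{bounded speed} and \emph{near-stasis} regimes, with matching constants independent of $h$, is the one non-routine step of the proof; the rest is a straightforward bootstrap of the Lagrange flow off the uniform bound from Lemma~\ref{lemma_A_0}.
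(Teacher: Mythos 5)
The treatment of \eqref{lemma_A_2_1}, \eqref{lemma_A_2_2}, and \eqref{lemma_A_2_3} for $m\ge 4$ coincides with the paper's proof and is fine.

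For $2\le m<4$, however, there is a genuine gap. Your dichotomy is by the size of the \emph{initial} velocity $v(0)$, and in the ``near-zero'' regime you claim that $\<X(t)\>\approx\<x\>$ and that the pointwise size estimate $\<x\>^{m/2-2-\mu}\lesssim h^{-(2/m)(m/2-2-\mu)}$ yields the bound. This fails for two reasons. First, for $2\le m<4$ the exponent $m/2-2-\mu$ is \emph{negative}, so $\<x\>\lesssim h^{-2/m}$ and $\<x\>\ge1$ give $\<x\>^{m/2-2-\mu}\le 1$ but \emph{not} $\<x\>^{m/2-2-\mu}\lesssim h^{-(2/m)(m/2-2-\mu)}$: the latter is the \emph{lower} bound, achieved only when $\<x\>\approx h^{-2/m}$. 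For small $\<x\>$ (say $\<x\>\sim 1$) the integrand is of size $h|v(t)|$, and integrating over $|I_h|\sim h^{-2/m}$ only produces $h^{1-2/m}\sup|v|$, which is \emph{not} $\lesssim h^{2/m}$ when $m<4$ unless $|v|\lesssim h^{4/m-1}$---a much stronger, $h$-dependent smallness than ``near zero.'' Second, $|v(0)|$ being small in an $h$-independent sense does not force $\<X(t)\>\approx\<x\>$ on $I_h$: the interval has length $\sim h^{-2/m}$, so the trajectory can move a distance comparable to or exceeding $|x|$.

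The paper avoids both problems by splitting instead on whether $\delta|v(t)|\le h(1+|X(t)|)^{m/2}$ holds for all $t\in I_h$. This $h$-dependent coupling of position and velocity is what makes the first case close: one gets $h\<X\>^{m/2-2-\mu}|v|\le\delta^{-1}h^2\<X\>^{m-2-\mu}\lesssim h^{4/m}$ pointwise, and integration over $I_h$ gives $h^{2/m}$. In the complementary case, the paper bootstraps $|v(t)|\approx|\eta|$ and proves $\partial_t^2|X(t)|^2\gtrsim|\eta|^2$, so that $|X(t)|^2$ is convex with quadratic growth $\gtrsim(t-r)^2|\eta|^2$; the extra decay $\<X\>^{-1-\mu}$ then integrates to a constant after the substitution $s=(t-r)|\eta|$. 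This convexity argument covers small-but-not-tiny $|v|$ (e.g. $|v|\sim h^{1/2}$), which your ``nearly straight line integral'' heuristic presumes bounded-below speed and so does not reach. To repair the proposal you would need to (a) replace the macroscopic/near-zero split by the paper's coupled threshold $\delta|v(t)|\lessgtr h\<X(t)\>^{m/2}$, and (b) in the ``large velocity'' case prove quantitative convexity of $|X(t)|^2$ rather than assume near-straightness.
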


\begin{proof}
Since $h|B(X(t))+h^2|\partial_x V(X(t))|\le Ch^{2/m}$ by Lemma \ref{lemma_A_0}, integrating \eqref{proof_lemma_hamilton_flow_1_1} we have \eqref{lemma_A_2_1} and \eqref{lemma_A_2_2}. 

Let $|\alpha|\ge1$. If $m\ge4$ then, since $\<X(t)\>\le Ch^{-2/m}$ and $m/2-2\ge0$, 
$$
h|(\partial_x^\alpha B)(X(t))|\le h\<X(t)\>^{m/2-2}\le Ch^{4/m}
$$
which implies \eqref{lemma_A_2_3}. Note that if $2\le m<4$ then this argument does not work well since $m/2-2<0$. Indeed, we only have $h|(\partial_x^\alpha B)(X(t))|\le C\<x\>^{-1}$ in general. 

Next we suppose Assumption \ref{assumption_B} and shall prove \eqref{lemma_A_2_3} for $2\le m<4$. We split into two cases. 

Case 1: If $\delta|v(t)|\le h(1+|X(t)|)^{m/2}$ for all $t\in I_h$ then 
$$
h|(\partial_x^\alpha B)(X(t))||v(t)|\le Ch^2\<X(t)\>^{m-2-\mu}\le Ch^{4/m}
$$
which implies \eqref{lemma_A_2_3} since $|I_h|\le 2\delta h^{-2/m}$. 

Case 2: Suppose that $\delta|v(t_0)|> h(1+|X(t_0)|)^{m/2}$ for some $t_0\in I_h$ and set $y=X(t_0)$ and $\eta=v(t_0)=\Xi(t_0)-hA(X(t_0))$. Then, taking $\delta$ (depending on $L$) smaller if necessary we see that
\begin{align}
\label{lemma_A_2_4}
h(1+|X(t)|)^{m/2}<3\delta |\eta|,\quad |\eta|/3\le |v(t)|\le 3|\eta|
\end{align}
for all $t\in I_h$. Indeed, setting 
$
T=\sup\{s\in [0,\delta h^{-2/m}];\ \eqref{lemma_A_2_4}\text{ holds for all }|t|\le s\}, 
$
if we assume $T<\delta h^{-2/m}$ then, by the Lagrange equations, 
\begin{align*}
|\dot v(t)|
&\le Ch^{2/m}|v(t)|+Ch^{2/m}\<X(t)\>^{-1}[h(1+|X(t)|)^{m/2}]^{2-2/m}\\
&\le Ch^{2/m}|v(t)-\eta|+Ch^{2/m}|\eta|+Ch^{2/m}(\delta|\eta|)^{2-2/m}\\
&\le Ch^{2/m}|v(t)-\eta|+Ch^{2/m}|\eta|. 
\end{align*}
for $|t|< T$. Integrating this inequality and using Gronwall's inequality we see that $|v(t)-\eta|\le Ch^{2/m}|\eta||t|\le C\delta|\eta|$ and hence
$$
|\eta|/2\le|v(t)|\le 2|\eta|,\quad|t|<T,
$$
provided that $\delta>0$, which may be taken independently of  $|\eta|$, is small enough. Integrating the Lagrange equations and using the bound $1+|y|\le (h\delta|\eta|)^{-2/m}$, we also have 
$1+|X(t)|\le 1+|y|+2T|\eta|
\le h^{-2/m}(\delta^{2/m}|\eta|^{2/m}+2\delta |\eta|)
$, which, together with the bound \eqref{lemma_A_2_0}, implies
\begin{align*}
h(1+|X(t)|)^{m/2}
&\le \delta|\eta|(1+2\delta^{1-2/m}|\eta|^{1-2/m})^{m/2}\\
&\le \delta|\eta|\left(1+2(\delta C_0L)^{1-2/m}\right)^{m/2}.
\end{align*}
Choosing $\delta$ so small that $1+2(\delta C_0L)^{1-2/m}<2^{2/m}$ we obtain
$$
h(1+|X(t)|)^{m/2}\le 2\delta|\eta|,\quad |t|<T.
$$
By the continuity of the flow with respect to $t$, we see that \eqref{lemma_A_2_3} still holds for $T+\ep$ with some $\ep>0$, which contradicts the definition of $T$. We therefore have $T=\delta h^{-2/m}$. Using \eqref{lemma_A_2_4}, we obtain a lower bound of $\partial_t^2(|X(t)|^2)$: 
\begin{align*}
\partial_t^2(|X(t)|^2)
&=2|v(t)|^2+X(t)\cdot hB(X(t))v(t)-X(t)\cdot h^2\partial_xV(X(t))\\
&\ge |\eta|^2/2-C\delta |\eta|^2\ge |\eta|^2/4,\quad t\in I_h.
\end{align*}
In particular $|X(t)|^2$ is convex, and hence there exists $r\in I_h$ such that
$$
1+|X(t)|^2\ge 1+(t-r)^2|\eta|^2/8+|X(r)|^2\ge 1+(t-r)^2|\eta|^2/8,\ t\in I_h.
$$
Finally, using this bound and the fact that $\<X(t)\>^{m/2-1}\le h^{-1+2/m}$ we conclude
$$
\int_{I_h}|h(\partial_x^\alpha B)(X(t))||v(t)|dt\le C_\alpha h^{2/m}
\int_{I_h}(1+|t-r||\eta|)^{-1-\mu}|\eta|dt\le C_\alpha h^{2/m}.
$$
which completes the proof. 
\end{proof}

\begin{proof}[Proof of Lemma \ref{lemma_A_1}]
We only consider the case $0\le t\le \delta h^{-2/m}$ and the proof of opposite case is analogous. Suppose that $(x,\xi)\in \Gamma^h(L)$. 
\eqref{lemma_A_1_1} with $|\alpha+\beta|=0$ follows from \eqref{lemma_A_2_1} 4 and \eqref{lemma_A_2_2}. Differentiating \eqref{proof_lemma_hamilton_flow_1_1} with $|\alpha+\beta|=1$, we have \begin{align*}
\frac{d}{dt}
\left(\begin{matrix}
\dderiv{x}{\xi}{\alpha}{\beta} X(t)\\
\dderiv{x}{\xi}{\alpha}{\beta} v(t)
\end{matrix}\right)
=
\left(\begin{matrix}
0& 1\\
L^h(t)&hB(X(t))
\end{matrix}\right)
\left(\begin{matrix}
\dderiv{x}{\xi}{\alpha}{\beta} X(t)\\
\dderiv{x}{\xi}{\alpha}{\beta} v(t)
\end{matrix}\right)
\end{align*}
where $L^h(t)$ denotes the matrix with $(j,k)$-component
$$
\sum_{\ell=1}^dh\nabla_{x_k}B_{jl}(X(t))v_l(t)-h^2\partial_{x_j}\partial_{x_k}V(X(t)).
$$
Set $f(t)=\left(\dderiv{x}{\xi}{\alpha}{\beta} X(t),h^{-2/m}\dderiv{x}{\xi}{\alpha}{\beta} v(t)\right)^T$ and 
$$
A(t)=
\left(\begin{matrix}
0& h^{2/m}\\
h^{-2/m}L^h(t)&hB(X(t))
\end{matrix}\right).
$$
\eqref{lemma_A_2_3} then shows $\int_{I_h}|A(t)|dt\le C$ on $\Gamma^h(L)$. On the other hand, $f(t)$ solves 
\begin{align}
\label{proof_Lemma_A_1_1}
\dot f(t)=A(t)(f(t)-f(0))+A(t)f(0)
\end{align}
where $A(t)f(0)=(g_1(t),g_2(t))^T$ is given by
\begin{align*}
g_1(t)&=h^{2/m}\dderiv{x}{\xi}{\alpha}{\beta}(\xi-hA(x)),\\
g_2(t)&=h^{-2/m}L^h(t)\dderiv{x}{\xi}{\alpha}{\beta}x+hB(X(t))\dderiv{x}{\xi}{\alpha}{\beta}(\xi-hA(x),
\end{align*}
which, by virtue of \eqref{lemma_A_2_3}, satisfy
\begin{equation}
\left\{\begin{aligned}
\label{proof_lemma_A_1_1_2}
h^{2/m}|\dderiv{x}{\xi}{\alpha}{\beta}(\xi-hA(x))|&\le C_{\alpha\beta}h^{(2/m)\big(1+\min(|\alpha|,1)\big)},\\
|hB(X(t))\dderiv{x}{\xi}{\alpha}{\beta}(\xi-hA(x))|&\le C_{\alpha\beta}h^{(2/m)\big(1+\min(|\alpha|,1)\big)},\\
\int_{I_h}|h^{-2/m}L^h(t)|dt&\le C_{\alpha\beta}.
\end{aligned}
\right.
\end{equation}
Integrating \eqref{proof_Lemma_A_1_1} and using Gronwall's inequality we have
$$
|f(t)-f(0)|\le \int_{I_h}|A(t)f(0)|dt e^{\int_{I_h}|A(t)|dt}\le C_{\alpha\beta},
$$
which implies 
$$
|\dderiv{x}{\xi}{\alpha}{\beta}(v(t)-\xi-hA(x))|\le C_{\alpha\beta}h^{2/m},\quad 0\le t\le \delta h^{-2/m},
$$ 
and hence 
\begin{align*}
|\partial_t\dderiv{x}{\xi}{\alpha}{\beta}X(t)|
&\le |\dderiv{x}{\xi}{\alpha}{\beta}(v(s)-\xi-hA(x))|+|\dderiv{x}{\xi}{\alpha}{\beta}(\xi-hA(x))|\\
&\le C_{\alpha\beta}h^{(2/m)\min(|\alpha|,1)},\quad 0\le t\le \delta h^{-2/m}.
\end{align*}
Integrating this inequality, we obtain the estimates for $\dderiv{x}{\xi}{\alpha}{\beta}X(t)$:
\begin{align}
\label{proof_lemma_A_1_2}
|\dderiv{x}{\xi}{\alpha}{\beta}(X(t)-x)|\le C_{\alpha\beta}h^{(2/m)\min(|\alpha|,1)}|t|,\quad 0\le t\le \delta h^{-2/m}
\end{align}
Finally, since
\begin{align*}
\partial_x\Xi(t)&=\partial_x(v(t)-hA(x))+h\partial_xA(x)+h(\partial A)(X(t))\partial_xX(t)=O(h^{2/m}),\\
\partial_\xi(\Xi(t)-\xi)&=\partial_x(v(t)-\xi)+h(\partial A)(X(t))\partial_\xi X(t)=O(h^{2/m}),
\end{align*}
we have $|\dderiv{x}{\xi}{\alpha}{\beta}(\Xi(t)-\xi)|\le C_{\alpha\beta}h^{2/m}$ for $0\le t\le \delta h^{-2/m}$. 

The estimates of higher order derivatives are verified by an induction on $|\alpha+\beta|$ and we omit details. 

Next we shall prove (2). Set $F^h:(x,\xi)\mapsto (h^{2/m}x,\xi)$ and 
\begin{align*}
X^h(t,y,\xi):=F\circ X\circ F^{-1}(t,y,\xi)=h^{2/m}X(h^{-2/m}y,\xi).
\end{align*}
By \eqref{lemma_A_1} with $L$ replaced by $2L$, 
$
|\dderiv{y}{\xi}{\alpha}{\beta}(X^h(t)-y)|\le C_{\alpha\beta}h^{2/m}|t| 
$
on $I_h\times F(\Gamma^h(2L))$ for $|\alpha+\beta|\le1$. Hence, if $\delta>0$ is small enough then
\begin{align}
\label{proof_lemma_A_4}
\left|\left|\frac{\partial(X^h(t),\xi)}{\partial(y,\xi)}-\Id_{\R^{2d}}\right|\right|<C\delta<1/2\quad\text{on}\quad I_h\times F(\Gamma^h(2L)).
\end{align}
Then, by the same argument as that in \cite[Lemma A.4]{Mizutani2}, we see that the map $\Psi^h:(y,\xi)\mapsto (X^h(t,y,\xi),\xi)$ is a diffeomorphism from $F(\Gamma^h(2L))$ onto its range for all $t\in I_h$ and that
$$
F(\Gamma^h(L))\subset \Psi^h(F(\Gamma^h(2L))),\quad t\in I_h.
$$
Since $F$ is globally diffeomorphic on $\R^{2d}$, $\Psi:(x,\xi)\mapsto (X(t,x,\xi),\xi)$ is a diffeomorphism from $\Gamma^h(2L)$ onto its range and satisfies
$$
\Gamma^h(L)\subset \Psi(\Gamma^h(2L)),\quad t\in I_h.
$$
Let $(Y(t,x,\xi),\xi):\Gamma^h(L)\to \Gamma^h(2L)$ be the corresponding inverse. Then, by using the inequality
$
x=X(t,Y(t,x,\xi),\xi)
$, we have
$$
|Y(t,x,\xi)-x|=|Y(t,x,\xi)-X(t,Y(t,x,\xi),\xi)|\le\sup_{\Gamma^h(2L)}|x-X(t,x,\xi)|\le C|t|. 
$$
Differentiating this inequality with respect to$\partial_x^\alpha\partial_\xi^\beta$, $|\alpha+\beta|=1$, we also have 
$$
(\partial_x X)(t,Y(t),\xi)\dderiv{x}{\xi}{\alpha}{\beta}(Y(t)-x)=\dderiv{y}{\xi}{\alpha}{\beta}(y-X(t,y,\xi))|_{y=Y(t,x,\xi)}.
$$
SInce $(\partial_x X)(t,Y(t),\xi)$ is invertible and its inverse is uniformly bounded on $I_h\times \Gamma^h(L)$ by \eqref{proof_lemma_A_4}, we see that
$$
|\dderiv{x}{\xi}{\alpha}{\beta}(Y(t)-x)|\le \sup_{\Gamma^h(2L)}|\dderiv{y}{\xi}{\alpha}{\beta}(y-X(t,y,\xi))|\le C_{\alpha\beta}h^{(2/m)|\alpha|}|t|
$$
on $I_h\times \Gamma^h(L)$. The estimates on higher derivatives are verified by an induction on $|\alpha+\beta|$ and we omit details.
\end{proof}

\subsection{Semiclassical paramatrix}
\label{semiclassical_parametrix}
We now turn into the construction of parametrices. We begin with the general case.

\begin{theorem}
\label{theorem_WKB_1}
There exists $\delta>0$ such that, for any $h\in(0,1]$ and $1\le R\le h^{-2/m}$, the following statements are satisfied with constants independent of $h$ and $R$:\\
\emph{(1)} There exists a solution $S^h\in C^\infty((-\delta R,\delta R)\times\R^{2d})$ to the Hamilton-Jacobi equation:
\begin{equation}
\left\{\begin{aligned}
\label{theorem_WKB_1_1}
&\partial_t S^h(t,x,\xi)+p^h(x,\partial_x S^h(t,x,\xi))=0,
\quad (t,x,\xi)\in(-\delta R,\delta R)\times\Omega^h(R/3,3L), \\
&S^h(0,x,\xi)
=x\cdot\xi,\quad
(x,\xi)\in\Omega^h(R/3,3L),
\end{aligned}\right.
\end{equation}
such that 
\begin{align}
\label{theorem_WKB_1_2}
|\dderiv{x}{\xi}{\alpha}{\beta}\left(S^h(t,x,\xi)-x\cdot\xi+t\wtilde p^h(x,\xi)\right)|
\le 
C_{\alpha\beta} \<x\>^{-1-\min(|\alpha|,1)}|t|^2,
\end{align} 
uniformly in $(t,x,\xi)\in(-\delta R,\delta R)\times\R^{2d}$, where $\wtilde p^h$ is given by \eqref{proof_lemma_A_0_1}. \\
\emph{(2)} For any $\chi^h\in S(1,g)$ supported in $\Omega^h(R,L)$ and integer $N\ge0$, there exists a bounded family $\{a^h(t); |t|\le\delta R,\ h\in(0,1]\} \subset S(1,g)$ with $\supp a^h(t)\subset \Omega^h(R/2,2L)$ such that
$$
e^{-itP^h/h}\chi^h(x,hD)=J_{S^h}(a^h)+Q^h(t,N),
$$
where $P^h=h^2P$ and $J_{S^h}(a^h)$ is the $h$-FIO with the phase $S^h$ and the amplitude $a^h$ defined by
$$
J_{S^h}(a^h)f(x)=(2\pi h)^{-d}\int e^{i\left(S^h(t,x,\xi)-y\cdot\xi\right)/h} a^h(t,x,\xi)f(y)dyd\xi,
$$
and the remainder $Q^h(t,N)$ satisfies
\begin{align}
\label{theorem_WKB_1_4}
\sup_{|t|\le \delta R}\norm{Q^h(t,N)}_{L^2\to L^2}\le C_N h^{N-1-2/m}.
\end{align}
Furthermore, if $K^h(t,x,\xi)$ denotes the kernel of $J_{S^h}(a^h)$ then 
\begin{align}
\label{theorem_WKB_1_5}
|K^h(t,x,y)| \lesssim \min\{h^{-d},\ |th|^{-d/2}\},\quad x,\xi \in \R^d,\ h \in (0,1],\ |t| \le \delta R.
\end{align}
\end{theorem}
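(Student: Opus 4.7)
The plan for Part~(1) is to construct $S^h$ as the generating function of the Hamilton flow of the globally cut-off Hamiltonian $\wtilde p^h$ introduced in the proof of Lemma~\ref{lemma_A_0}, which coincides with $p^h$ on $\Gamma^h(3L)\supset\Omega^h(R/3,3L)$. By Lemma~\ref{lemma_A_3} applied to $\wtilde p^h$, for sufficiently small $\delta>0$ and $|t|\le\delta R$ the map $(y,\xi)\mapsto(\wtilde X(t,y,\xi),\xi)$ is a diffeomorphism from $\Omega^h(R/2,2L)$ onto its image (which contains $\Omega^h(R,L)$), with inverse $(x,\xi)\mapsto(\wtilde Y(t,x,\xi),\xi)$. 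I would then define $S^h$ via the action integral along the bicharacteristic so that $\partial_x S^h(t,x,\xi)=\wtilde\Xi(t,\wtilde Y(t,x,\xi),\xi)$ and $\partial_\xi S^h(t,x,\xi)=\wtilde Y(t,x,\xi)$, then extend smoothly to all of $\R^{2d}$; the HJ equation and $S^h|_{t=0}=x\cdot\xi$ follow at once from the symplectic nature of the flow. To prove \eqref{theorem_WKB_1_2}, I Taylor expand in $t$ around zero: since $\partial_t S^h|_{t=0}=-\wtilde p^h(x,\xi)$,
\begin{align*}
S^h(t,x,\xi)-x\cdot\xi+t\wtilde p^h(x,\xi)=\int_0^t(t-s)\,\partial_s^2 S^h(s,x,\xi)\,ds.
\end{align*}
Differentiating HJ and using $|\partial_x\wtilde p^h|\lesssim\<x\>^{-1}$ from \eqref{symbols_2} together with the flow bounds of Lemma~\ref{lemma_A_3} gives $|\partial_s^2 S^h|\lesssim\<x\>^{-1}$; higher derivatives follow by induction, where each additional $x$-derivative on $\wtilde p^h$ contributes one more $\<x\>^{-1}$ factor (saturating after one derivative because of the $\partial_x^2 S^h$ term), producing the weight $\<x\>^{-1-\min(|\alpha|,1)}$.

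For Part~(2) I seek $a^h=\sum_{j=0}^{N-1}h^j a_j^h$ and expand $(ih\partial_t-P^h)J_{S^h}(a^h)$. The HJ equation annihilates the leading phase contribution on $\Omega^h(R/3,3L)$, leaving a transport operator
\begin{align*}
T^h a_0^h=\bigl(\partial_t+(\partial_\xi p^h)(x,\partial_x S^h)\cdot\partial_x+c^h\bigr)a_0^h
\end{align*}
with $c^h\in S(1,g)$, plus an inhomogeneous source at each order $h^j$. Setting $a_0^h(0)=\chi^h$ and $a_j^h(0)=0$ for $j\ge1$, one solves these transport equations along characteristics, which coincide with integral curves of the Hamilton flow of $\wtilde p^h$; Lemma~\ref{lemma_A_3} ensures that each $a_j^h(t)$ is supported in $\Omega^h(R/2,2L)$ and that $\{a_j^h(t)\}$ is bounded in $S(1,g)$ uniformly in $h$ and $|t|\le\delta R$. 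The leftover is of the form $J_{S^h}(h^N r_N^h)$ with $\{r_N^h\}$ bounded in $S(1,g)$, hence $L^2$-bounded of size $O(h^N)$ by Calder\'on--Vaillancourt. Since $Q^h(t,N)=e^{-itP^h/h}\chi^h(x,hD)-J_{S^h}(a^h)$ vanishes at $t=0$, Duhamel's formula yields
\begin{align*}
\norm{Q^h(t,N)}_{L^2\to L^2}\lesssim h^{-1}\int_0^{|t|}\norm{J_{S^h}(h^N r_N^h)(s)}_{L^2\to L^2}\,ds\lesssim h^{-1}\cdot\delta R\cdot h^N\lesssim h^{N-1-2/m},
\end{align*}
giving \eqref{theorem_WKB_1_4}.

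Finally, the dispersive bound \eqref{theorem_WKB_1_5} for $K^h(t,x,y)=(2\pi h)^{-d}\int e^{i(S^h(t,x,\xi)-y\cdot\xi)/h}a^h(t,x,\xi)\,d\xi$ is obtained in two regimes: boundedness of $a^h$ and the compact $\xi$-support inherited from $\chi^h$ via the flow give the trivial $h^{-d}$ estimate, while non-degeneracy of the Hessian $\partial_\xi^2 S^h\approx -t(g^{jk}(x))_{j,k}$ (a consequence of \eqref{theorem_WKB_1_2} with $\alpha=0,|\beta|=2$ and uniform ellipticity of $g^{jk}$; valid because the error $O(\<x\>^{-1}t^2)\lesssim\delta|t|$ on the support) makes standard stationary phase in $\xi$ applicable and gives $(h/|t|)^{d/2}$, hence $|K^h|\lesssim|th|^{-d/2}$. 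The hard part will be the sharp phase bound \eqref{theorem_WKB_1_2} with the weight $\<x\>^{-1-\min(|\alpha|,1)}$ over the long time $|t|\le\delta R$, with $R$ potentially as large as $h^{-2/m}$: it requires exploiting the sharp long-range structure of the flow estimates in Lemma~\ref{lemma_A_3}, and calibrating $N$ large enough that the Duhamel loss $h^{-1-2/m}$ is absorbed when \eqref{theorem_WKB_1_4} is later applied.
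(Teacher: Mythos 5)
Your proposal is correct and follows essentially the same approach as the paper: the phase $S^h$ is built as the generating function (action integral) of the flow of the cut-off Hamiltonian $\wtilde p^h$ using Lemma \ref{lemma_A_3}; the amplitude is obtained by solving the transport hierarchy along the characteristics of $(\partial_\xi p^h)(x,\partial_x S^h)$ with $a_0^h(0)=\chi^h$ and $a_j^h(0)=0$; the remainder is treated by Duhamel; and the kernel bound combines the trivial $h^{-d}$ estimate for $|t|\le h$ with non-degenerate stationary phase for $h\le|t|\le\delta R$. The one genuine difference is how you get \eqref{theorem_WKB_1_2}: you Taylor-expand to second order in $t$ and bound $\partial_s^2 S^h$, whereas the paper uses the conservation law $p^h(x,\partial_x S^h(t))=p^h(Y(t),\xi)$ together with $|\partial_x\wtilde p^h|\lesssim\<x\>^{-1}$ and $|Y(t)-x|\lesssim|t|$ to get $|\partial_t(S^h-x\cdot\xi+t\wtilde p^h)|\lesssim\<x\>^{-1}|t|$, then integrates once. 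The two routes are equivalent, though the paper's is a bit cleaner since estimating $\partial_s^2 S^h$ forces you to also control $\dot Y(s)$, an extra small step. Two small slips worth noting: the $L^2$-boundedness of $J_{S^h}(r_N^h)$ is not Calder\'on--Vaillancourt (that is for $\Psi$DOs); it requires the standard $L^2$-boundedness theorem for $h$-FIOs with $|\partial_x\otimes\partial_\xi S^h-\mathrm{Id}|<1/2$, which the paper invokes explicitly; and the closing remark about ``calibrating $N$ large enough that the Duhamel loss $h^{-1-2/m}$ is absorbed'' conflates the present theorem (where the $h^{N-1-2/m}$ loss is exactly what \eqref{theorem_WKB_1_4} states) with its later applications.
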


\begin{proof}
\textbf{Construction of the phase $S^h$}: 
Let $X(t),\Xi(t)$ and $Y(t)$ be as in Lemma \ref{lemma_A_3} with $(R,L)$ replaced by $(R/4,4L)$. Define an action integral $\wtilde S^h$ on $(-\delta R,\delta R)\times\Omega^h(R/4,4L)$ by
$$
\wtilde S^h(t,x,\xi):=x\cdot\xi+\int_0^tL^h(X(s,Y(t,x,\xi),\xi),\Xi(s,Y(t,x,\xi),\xi)ds,
$$
where $L^h=\xi\cdot\partial_\xi p^h-p^h$ is the Lagrangian associated to $p^h$. 
A direct computation yields that $\wtilde S^h$ solves \eqref{theorem_WKB_1_1} and satisfies
$$
(\partial_\xi \wtilde S^h,\partial_x\wtilde  S^h)=(Y(t,x,\xi),\Xi(t,Y(t,x,\xi),\xi)).
$$
Furthermore, the following conservation law holds:
$$
p^h(x,\partial_x \wtilde S^h(t,x,\xi))=p^h(x,\Xi(t,Y(t,x,\xi),\xi))=p^h(Y(t,x,\xi),\xi).
$$
By virtue of Lemma \ref{lemma_A_3} (2), taking $\delta>0$ smaller if necessary we see that 
$$
h^2\<Y(t,x,\xi)\>^m\le 5L,\quad (t,x,\xi)\in(-\delta R,\delta R)\times\Omega^h(R/4,4L)
$$ 
and hence $p^h$ can be replaced by $\wtilde p^h$ since $\wtilde p^h\equiv p^h$ on $\Gamma^h(6L)$. Using Lemma \ref{lemma_A_3} (2) and the fact that 
\begin{align}
\label{proof_theorem_WKB_1_0}
|\partial_x \wtilde p^h(x,\xi)|\lesssim \<x\>^{-1}\ \text{on}\ \Omega^h(R/4,4L),
\end{align} 
we then have, for $|t|\le \delta R$ and $(x,\xi)\in \Omega^h(R/4,4L)$, 
\begin{equation}
\begin{aligned}
\label{proof_theorem_WKB_1_1}
|\wtilde p^h(x,\partial_x \wtilde S^h(t,x,\xi))-\wtilde p^h(x,\xi)|
&\lesssim |Y(t)-x|\int_0^1|(\partial_x\wtilde p^h)(\lambda x +(1-\lambda)Y(t),\xi)|d\lambda\\
&\lesssim \<x\>^{-1}|t|.
\end{aligned}
\end{equation}
It also follows from Lemma \ref{lemma_A_3} (2) that, for $|\alpha+\beta|\ge1$, 
\begin{align}
\label{proof_theorem_WKB_1_2}
|\dderiv{x}{\xi}{\alpha}{\beta}(\wtilde p^h(x,\partial_x \wtilde S^h(t,x,\xi))-\wtilde p^h(x,\xi))|\le C_{\alpha\beta} \<x\>^{-1-\min\{|\alpha|,1\}}|t|.
\end{align}
Integrating with respect to $t$ and using Hamilton-Jacobi equation \eqref{theorem_WKB_1_1}, we see that $\wtilde S^h$ satisfies \eqref{theorem_WKB_1_2} on $\Omega^h(R/4,4L)$. 
Choosing $\psi\in S(1,g)$ so that $\supp \psi \subset\Omega^h(R/4,4L)$ and $\psi\equiv1$ on $\Omega^h(R/3,3L)$, we extend $\wtilde S^h$ to the whole space $\R^{2d}$ as follows:
$$
S^h(t,x,\xi)=x\cdot\xi-t\wtilde p^h(x,\xi)+\psi(x,\xi)\left(\wtilde S^h(t,x,\xi)-x\cdot\xi+t\wtilde p^h(x,\xi)\right).
$$
Then $S^h(t,x,\xi)$ satisfies the assertion. 

\textbf{Construction of the amplitude $a^h$}: 
Let us make the following  ansatz: 
$$
v(t,x)=\frac{1}{(2\pi h)^d}\int e^{i(S^h(t,x,\xi)-y\cdot\xi)/h}a^h(t,x,\xi)f(y)dyd\xi,
$$
where $a^h=\sum_{j=0}^{N-1} h^ja^h_j$. In order to  approximately solve the Schr\"odinger equation
$$
(hD_t+P^h)v(t)=O(h^N);\quad v|_{t=0}=\chi^h(x,hD)u_0,
$$ 
the amplitude should satisfy the following transport equations:
\begin{equation}
\label{transport_WKB}
\left\{
\begin{aligned}
&\partial_t a^h_{0}+\X\cdot \partial_xa_{0}+\Y a^h_{0}=0;\quad a^h_0|_{t=0}=\chi^h,\\
&\partial_t a^h_{j}+\X\cdot \partial_xa_{j}+\Y a^h_{j}+iKa^h_{j-1}=0;\quad a^h_j|_{t=0}=0,\quad 1\le j \le N-1, 
\end{aligned}
\right.
\end{equation}
where $K=-\frac12\partial_jg^{jk}(x)\partial_k$, a vector field $\X$ and a function $\Y$ are defined by
\begin{align*}
\X(t,x,\xi)
&:=(\partial _{\xi} p^h)(x,\partial_x S^h(t,x,\xi)),\\
\Y(t,x,\xi)
&:=[k(x,\partial_x)S^h+p_1^h(x,\partial_xS^h)](t,x,\xi).
\end{align*}
Note that \eqref{theorem_WKB_1_2} and \eqref{symbols_2} imply
\begin{align}
\label{proof_theorem_WKB_1_5}
|\dderiv{x}{\xi}{\alpha}{\beta}\Y(t,x,\xi)|\le C_{\alpha\beta}\<x\>^{-1}
\quad\text{on}\quad
(-\delta R,\delta R)\times \Omega(R/3,3L).
\end{align}
The system \eqref{transport_WKB} can be solved by the standard method of characteristics along the flow generated by $\X(t,x,\xi)$. 
More precisely, let us consider the following ODE
$$
\partial_t z(t,s,x,\xi)=\X(t,z(t,s,x,\xi),\xi);\quad z(s,s)=x.
$$
Then, by virtue of \eqref{proof_theorem_WKB_1_1} and \eqref{proof_theorem_WKB_1_2}, the same argument as that in Subsection \ref{classical_mechanics} yields that there exists $\delta>0$ such that, for any fixed $h\in(0,1]$, $1\le R\le h^{-2/m}$, $z(t,s,x,\xi)$ is well-defined for $t,s\in(-\delta R,\delta R)$ and $(x,\xi)\in \Omega(R/3,3L)$, and satisfies
\begin{align}
\label{proof_theorem_WKB_1_4}
|z(t,s)-x|\le C |t-s|,\quad
|\dderiv{x}{\xi}{\alpha}{\beta}(z(t,s)-x)|\le C_{\alpha\beta}\<x\>^{-1}|t-s|,\quad |\alpha+\beta|\ge1.
\end{align}
For $(t,x,\xi)\in (-\delta R,\delta R)\times \Omega(R/3,3L)$, we then define $a_j$, $j=0,1,...,N-1$,  inductively by
\begin{equation}
\begin{aligned}
\nonumber
a_{0}(t,x,\xi)&=\chi^h(z(0,t,x,\xi),\xi)\exp\left(\int_0^t\Y(s,z(s,t,x,\xi),\xi)ds\right),\\
a_{j}(t,x,\xi)&=-\int_0^t (iKa_{j-1})(s,z(s,t,x,\xi),\xi) \exp\left(\int_u^t\Y(u,z(u,t,x,\xi),\xi)du\right)ds.
\end{aligned}
\end{equation}
It is easy to see from \eqref{proof_theorem_WKB_1_4} and $\supp \chi^h\subset \Omega^h(R,L)$ that $\supp a_j\subset\Omega^h(R/2,2L)$ for all $|t|\le\delta R$. 
Furthermore, taking $\delta>0$ smaller if necessary we see that $a_j$ are smooth on $ \Omega(5R/12,12L/5)$. 
Since $\Omega^h(R/2,2L)\Subset\Omega(5R/12,12L/5)\Subset\Omega(R/3,3L)$, if we extend $a_j$ to the whole space $\R^{2d}$ so that $a_j\equiv0$ outside $\Omega^h(R/2,2L)$, then $a_j$ are still smooth. 
We further learn by \eqref{proof_theorem_WKB_1_4}, \eqref{proof_theorem_WKB_1_5} and the fact $\chi\in S(1,g)$ that $a_j\in S(1,g)$ uniformly with respect to $|t|\le \delta R$ and $h\in(0,1]$. 
Finally, one can check by a direct computation that $a_j$ solve the system \eqref{transport_WKB}. 

\textbf{Justification of the parametrix}: At first, since 
$
|\partial_\xi\otimes\partial_x \wtilde p^h(x,\xi)|\lesssim \<x\>^{-1}$ on $\Gamma^h(3L)
$, 
if $\delta>0$ is small enough then \eqref{theorem_WKB_1_2} implies 
$$
|\partial_\xi\otimes\partial_x S^h(t,x,\xi)-\Id|<1/2\quad\text{for}\ (t,x,\xi)\in(-\delta R,\delta R)\times \Omega^h(R/3,3L).
$$
The standard $h$-FIO theory (cf. \cite{Robert2}) then shows that, for any amplitude $b^h\in S(1,g)$ supported in $\Omega^h(R/2,2L)$ ($\Subset \Omega^h(R/3,3L)$), $h$-FIO $J_{S^h}(b^h)$ is uniformly bounded on $L^2$:
$$
\sup_{|t|\le \delta R}\norm{J_{S^h}(b^h)f}_{L^2}\le C\norm{f}_{L^2},\quad h\in(0,1],\ 1\le R\le h^{-2/m},
$$
where $C>0$ is independent of $h$ and $R$. 

We now prove the remainder estimate \eqref{theorem_WKB_1_4}. We may assume $t\ge0$ without loss of generality since the proof for the opposite case is analogous. By the Duhamel formula, we have
\begin{align*}
e^{-itP^h/h}\chi^h(x,hD)&=J_{S^h}(a^h)+Q^h(t,N),\\
Q^h(t,N)&=-\frac ih\int_0^t e^{-i(t-s)P^h/h}(hD_t+P^h)J_{S^h}(a^h)|_{t=s}ds.
\end{align*}
By \eqref{theorem_WKB_1_1}, \eqref{transport_WKB} and direct computations, we obtain $$(hD_t+P^h)J_{S^h}(a^h)=-ih^NJ_{S^h}(Ka_{N-1}^h).$$ Since $\supp Ka_{N-1}^h\subset\Omega(R/2,2L)$ and $Ka_{N-1}^h\in S(1,g)$, $J_{S^h}(P^ha_{N-1}^h)$ is bounded on $L^2$ uniformly in $h\in(0,1]$, $1\le R\le h^{-2/m}$ and $0\le t\le \delta R$, and \eqref{theorem_WKB_1_4} follows. 

\textbf{Dispersive estimates}: The kernel of $J_{S^h}(a^h)$ is given by
$$
K^h(t,x,y)=(2\pi h)^{-d}\int e^{i\left(S^h(t,x,\xi)-y\cdot\xi\right)/h}a^h(t,x,\xi)d\xi.
$$
If $|t|\le h$, then the assertion is obvious since $a^h$ is compactly supported in $\xi$.  
On the other hand, by virtue of \eqref{theorem_WKB_1_2}, for $(x,\xi)\in\R^{2d}$ we have
$$
\frac{\partial_\xi^2S^h(t,x,\xi)}{t}=-(g^{jk}(x))_{j,k}+O(\delta),\quad h \le |t|\le \delta R,
$$
and $|t^{-1}\dderiv{x}{\xi}{\alpha}{\beta}S^h(t,x,\xi)|\le C_{\alpha\beta}$ if $h \le |t|\le \delta R$ and $|\alpha+\beta|\ge2$. As a consequence, since $g^{jk}(x)$ is uniformly elliptic, the phase function $t^{-1}(S^h(t,x,\xi)-y\cdot\xi)$ has a unique non-degenerate critical point for all $h\le |t|\le \delta R$ and we can apply the stationary phase method to $K^h(t,x,y)$, provided that $\delta>0$ is small enough. Therefore, 
$$
|K^h(t,x,y)|\lesssim h^{-d}|th^{-1}|^{-d/2}\lesssim |th|^{-d/2},\quad h\le|t|\le \delta R,\ x,\xi\in \R^d,\ h\in(0,1],
$$
which completes the proof.
\end{proof}

\begin{remark}
\label{remark_WKB_1}
It can be verified by the same argument and Lemma \ref{lemma_A_3} (3) that
for any symbol $\chi^h\in S(1,g)$ supported in $\Gamma^h(L)$, 
$
e^{-itP^h/h}\chi^h(x,hD)
$ 
can be approximated by a time-dependent $h$-FIO as above if $|t|<\delta$, and in particular obeys the dispersive estimate
$$
\norm{e^{-itP^h/h}\chi^h(x,hD)}_{L^1\to L^\infty}\lesssim \min\{h^{-d},|th|^{-d/2}\},\quad
|t|<\delta,\ h\in(0,1].
$$
\end{remark}

We next state the flat case. 
\begin{theorem}[Flat case]
\label{theorem_WKB_2}
Suppose that $g^{jk}\equiv \delta_{jk}$ and that either $m\ge4$ or Assumption \ref{assumption_B}. Then, there exists $\delta>0$ such that the following statements are satisfied with constants independent of $h\in(0,1]$:\\
\emph{(1)} There exists s solution $S^h\in C^\infty((-\delta h^{-2/m},\delta h^{-2/m})\times\R^{2d})$ to the Hamilton-Jacobi equation:
\begin{equation}
\left\{\begin{aligned}
\nonumber
&\partial_t S^h(t,x,\xi)+p^h(x,\partial_x S^h(t,x,\xi))=0,
\ (t,x,\xi)\in(-\delta h^{-2/m},\delta h^{-2/m})\times\Gamma^h(3L), \\
&S^h(0,x,\xi)
=x\cdot\xi,\ 
(x,\xi)\in\Gamma^h(3L),
\end{aligned}\right.
\end{equation}
such that
$$
|\dderiv{x}{\xi}{\alpha}{\beta}\left(S^h(t,x,\xi)-x\cdot\xi+t\wtilde p^h(x,\xi)\right)|
\le 
C_{\alpha\beta} h^{(2/m)(1+\min\{|\alpha|,1\})}|t|^2
$$
uniformly in $(t,x,\xi)\in(-\delta h^{-2/m},\delta h^{-2/m})\times\R^{2d}$. \\
\emph{(2)} For any $\chi^h\in S(1,g)$ with $\supp\chi^h\subset\Gamma^h(L)$ and integer $N\ge0$, there exists a bounded family $\{a^h(t); t\in (-\delta h^{-2/m},\delta h^{-2/m}),h\in(0,1]\} \subset S(1,g)$ with $\supp a^h(t)\subset \Gamma^h(2L)$ such that
$$
e^{-itH^h/h}\chi^h(x,hD)=J_{S^h}(a^h)+Q^h(t,N),
$$
where the kernel of $J_{S^h}(a^h)$ satisfies dispersive estimates \eqref{theorem_WKB_1_5} for $|t|\le \delta h^{-2/m}$, and the remainder satisfies
$$
\sup_{|t|\le \delta h^{-2/m}}\norm{Q^h(t,N)}_{L^2\to L^2}\le C_N h^{N-1-2/m}.
$$
\end{theorem}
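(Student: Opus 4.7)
The plan is to adapt the three-step proof of Theorem \ref{theorem_WKB_1}---action-integral phase, transport-equation amplitude, Duhamel remainder---substituting the flat-case flow estimates of Lemma \ref{lemma_A_1} for the general ones of Lemma \ref{lemma_A_3}. The gains of $h^{2/m}$ provided by Lemma \ref{lemma_A_1} are precisely what upgrades the weighted bound $\<x\>^{-1-\min(|\alpha|,1)}|t|^2$ on the phase correction to $h^{(2/m)(1+\min\{|\alpha|,1\})}|t|^2$, and what extends the lifespan of the parametrix from the scale $|t|\le\delta R$ to the full $|t|\le\delta h^{-2/m}$. A key simplification relative to Theorem \ref{theorem_WKB_1} is that the trapping-type ``shells'' $\Omega^h(R,L)$ are replaced by the single set $\Gamma^h(L)$, since the flat-case flow never leaves a slightly enlarged version of it on the full time scale.

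For the phase, I would define
$$
\wtilde S^h(t,x,\xi)=x\cdot\xi+\int_0^tL^h\bigl(X(s,Y(t,x,\xi),\xi),\Xi(s,Y(t,x,\xi),\xi)\bigr)ds
$$
with $L^h=\xi\cdot\partial_\xi p^h-p^h$ and $(X,\Xi,Y)$ the flow and partial inverse of Lemma \ref{lemma_A_1}. As in Theorem \ref{theorem_WKB_1}, $\wtilde S^h$ solves the Hamilton-Jacobi equation on $\Gamma^h(3L)$ with $(\partial_x\wtilde S^h,\partial_\xi\wtilde S^h)=(\Xi(t,Y,\xi),Y(t,x,\xi))$. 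Since both $\wtilde S^h-x\cdot\xi+t\wtilde p^h$ and its $t$-derivative vanish at $t=0$ on $\Gamma^h(3L)$ (the latter thanks to the HJ equation and $\wtilde p^h\equiv p^h$ there), two integrations in $t$ of $|\partial_t^2(\wtilde S^h-x\cdot\xi+t\wtilde p^h)|\lesssim h^{2/m}$---which follows from $\partial_t\partial_x\wtilde S^h=\partial_t\Xi(t,Y,\xi)=O(h^{2/m})$ by Lemma \ref{lemma_A_1}---yield the target bound in the case $|\alpha|=0$. Higher $(x,\xi)$-derivatives are handled by induction using the corresponding composed-flow bounds $|\dderiv{x}{\xi}{\alpha}{\beta}(\Xi-\xi)|\lesssim h^{2/m}$ and $|\dderiv{x}{\xi}{\alpha}{\beta}(Y-x)|\lesssim h^{(2/m)\min(|\alpha|,1)}|t|$ supplied by Lemma \ref{lemma_A_1}. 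I would then extend $\wtilde S^h$ globally via a cutoff $\psi\in S(1,g)$ equal to one on $\Gamma^h(L)$ and supported in $\Gamma^h(3L)$, obtaining $S^h$ with the required properties on the whole of $\R^{2d}$.

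For the amplitude and dispersive estimate, I would repeat the arguments of Theorem \ref{theorem_WKB_1}: the method of characteristics for $\mathcal X=\partial_\xi p^h(x,\partial_xS^h)$ produces $a_j^h\in S(1,g)$ supported in $\Gamma^h(2L)$ uniformly in $|t|\le\delta h^{-2/m}$ and $h\in(0,1]$; the bound $|\partial_\xi\otimes\partial_xS^h-\Id|<1/2$ underpinning the $L^2$-boundedness of $J_{S^h}(a^h)$ follows from the phase estimate because $|t\,\partial_\xi\otimes\partial_x\wtilde p^h|\lesssim h^{2/m}|t|\le\delta$ on $\Gamma^h$; the identity $(hD_t+P^h)J_{S^h}(a^h)=-ih^NJ_{S^h}(Ka_{N-1}^h)$ combined with Duhamel yields $\|Q^h(t,N)\|_{L^2\to L^2}\lesssim h^{-1}\cdot\delta h^{-2/m}\cdot h^N\lesssim h^{N-1-2/m}$; and the same phase estimate gives $\partial_\xi^2S^h/t=-\Id+O(h^{4/m}|t|)=-\Id+O(\delta)$ and $|t^{-1}\dderiv{x}{\xi}{\alpha}{\beta}S^h|\le C_{\alpha\beta}$ for $|\alpha+\beta|\ge2$ uniformly on $|t|\le\delta h^{-2/m}$, so the critical point of $t^{-1}(S^h-y\cdot\xi)$ is nondegenerate for $h\le|t|\le\delta h^{-2/m}$ and stationary phase delivers $|K^h|\lesssim|th|^{-d/2}$ (the case $|t|\le h$ is trivial from compact $\xi$-support). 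The main obstacle is tracking the $h^{2/m}$-gains through the phase estimate; the crucial extra factor $|t|$ beyond the naive $h^{2/m}|t|$ bound is forced by the cancellation $\partial_t(\wtilde S^h-x\cdot\xi+t\wtilde p^h)|_{t=0}=0$ coming from the Hamilton-Jacobi equation, which is what permits the two-step integration above.
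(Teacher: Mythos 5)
Your proposal follows essentially the same route the paper intends: the paper's own ``proof'' of Theorem \ref{theorem_WKB_2} is the one-line remark that it is analogous to Theorem \ref{theorem_WKB_1}, with Lemma \ref{lemma_A_1} replacing Lemma \ref{lemma_A_3} and $\partial_x\wtilde p^h=O(h^{2/m})$ replacing the $O(\<x\>^{-1})$ bound, and you have simply carried out that substitution (your two-integration argument for the phase is an equivalent repackaging of the paper's ``Taylor in $x$ plus one $t$-integration'' derivation). One small slip: in the stationary-phase step you write $\partial_\xi^2 S^h/t=-\Id+O(h^{4/m}|t|)$, but since $\partial_\xi^2\phi$ falls under the case $|\alpha|=0$ of the phase estimate the correct size is $O(h^{2/m}|t|)$ — still $O(\delta)$ on $|t|\le\delta h^{-2/m}$, so the conclusion is unaffected.
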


The proof is analogous to the general case and the only difference is to use Lemma \ref{lemma_A_1} and the fact that $\partial_x\wtilde p^h=O(h^{2/m})$ on $\Gamma^h(4L)$ instead of Lemma \ref{lemma_A_3} and \eqref{proof_theorem_WKB_1_0}, respectively. We hence omit the details.


\section{Proof of main theorems}

In this section we prove Theorems \ref{theorem_1} and \ref{theorem_2}. The general strategy is basically same as that of \cite[Section 7]{Mizutani2}. We begin with the following theorem which is a consequence of Theorem \ref{theorem_WKB_1} and Remark \ref{remark_WKB_1}. 

\begin{theorem}
\label{theorem_PT_1}
Fix $L>0$. Then, for sufficiently small $\delta>0$ depending only on $L$, the following statements are satisfied:\\
\emph{(1)} For any $h\in(0,1]$, $1\le R\le h^{-2/m}$ and symbol $\chi^h_R\in S(1,g)$ supported in $\{|x|>R\}\cap\Gamma^h(L)$, 
\begin{align}
\label{theorem_PT_1_1}
\norm{\chi^h_R(x,hD)e^{-itP}\chi^h_R(x,hD)^*}_{L^1 \to L^\infty} \le C_\delta|t|^{-d/2},\quad 0<|t|<\delta hR,
\end{align}
where $C_\delta>0$ may be taken uniformly with respect to $h$ and $R$.\\
\emph{(2)} For $h\in(0,1]$ and any symbol $\chi^h\in S(1,g)$ supported in $\Gamma^h(L)$,  
\begin{align}
\label{theorem_PT_1_2}
\norm{\chi^h(x,hD)e^{-itP}\chi^h(x,hD)^*}_{L^1 \to L^\infty} \le C_\delta|t|^{-d/2},\quad 0<|t|<\delta h. 
\end{align}
\end{theorem}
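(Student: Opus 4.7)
The plan is to combine the WKB parametrix from Theorem \ref{theorem_WKB_1} (for (1)) or Remark \ref{remark_WKB_1} (for (2)) with the dispersive kernel bound \eqref{theorem_WKB_1_5}, after rescaling time by $h$. Since $P^h=h^2P$, one has $e^{-itP}=e^{-i\tau P^h/h}$ with $\tau:=t/h$, so the validity range $|\tau|<\delta R$ (resp.\ $|\tau|<\delta$) of the parametrix corresponds precisely to the time intervals $|t|<\delta hR$ (resp.\ $|t|<\delta h$) appearing in (1) (resp.\ (2)). By the adjoint expansion \eqref{pdo_3}, $\chi^h_R(x,hD)^*$ is an $h$-$\Psi$DO with symbol in $S(1,g)$ having the same support as $\chi^h_R$ up to $O(h^\infty)$, so the parametrix may be applied directly to $e^{-i\tau P^h/h}\chi^h_R(x,hD)^*$.

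In case (1), Theorem \ref{theorem_WKB_1}(2) gives $e^{-i\tau P^h/h}\chi^h_R(x,hD)^* = J_{S^h}(b^h)+Q^h(\tau,N)$ with $b^h\in S(1,g)$ supported in $\Omega^h(R/2,2L)$ and $\|Q^h(\tau,N)\|_{L^2\to L^2}\le C_N h^{N-1-2/m}$. Composing with $\chi^h_R(x,hD)$ on the left, the standard $\Psi$DO--FIO composition rule (a stationary phase computation in the intermediate variables $(z,\eta)$) yields $\chi^h_R(x,hD)J_{S^h}(b^h) = J_{S^h}(c^h)+O_{L^2\to L^2}(h^N)$ with $c^h\in S(1,g)$ sharing the phase $S^h$ and the support of $b^h$, and leading amplitude $\chi^h_R(x,\partial_x S^h)\,b^h$. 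The dispersive kernel bound \eqref{theorem_WKB_1_5} then gives
$$
\|J_{S^h}(c^h)\|_{L^1\to L^\infty}\lesssim \min\{h^{-d},\,|\tau h|^{-d/2}\}\le |\tau h|^{-d/2}=|t|^{-d/2},
$$
which is the desired bound for the main contribution.

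The residual $\chi^h_R(x,hD)Q^h(\tau,N)$ has to be absorbed into $|t|^{-d/2}$. Since the Hamilton flow keeps $\supp b^h$ inside $\Omega^h(R/3,3L)$ for $|\tau|\le\delta R$ (Lemma \ref{lemma_A_3}), one can choose $\psi^h\in S(1,g)$ identically equal to $1$ on a slight enlargement of $\supp\chi^h_R$ and of the flow image, so that pseudolocality of both the FIO $J_{S^h}(b^h)$ and the $\Psi$DO $\chi^h(x,hD)^*$ yields $Q^h(\tau,N)=Q^h(\tau,N)\psi^h(x,hD)+O(h^\infty)$ in the appropriate operator norm. Combining the $L^2\to L^\infty$ and $L^1\to L^2$ bounds of \eqref{pdo_1} (each contributing $h^{-d/2}$) with the $L^2\to L^2$-smallness of $Q^h$ then gives
$$
\|\chi^h_R(x,hD)Q^h(\tau,N)\psi^h(x,hD)\|_{L^1\to L^\infty}\lesssim h^{-d/2}\cdot h^{N-1-2/m}\cdot h^{-d/2}=h^{N-1-2/m-d},
$$
which is $\le|t|^{-d/2}$ throughout $0<|t|\le \delta hR$ once $N$ is large enough, since $|t|^{-d/2}\ge(\delta hR)^{-d/2}\gtrsim h^{-d/2+d/m}$ in the worst case $R\sim h^{-2/m}$. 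The main technical step is this last error analysis, specifically the pseudolocality argument justifying the insertion of $\psi^h(x,hD)$ to the right of $Q^h$; the $O(h^\infty)$ remainders are controlled by non-stationary phase applied to the kernels of $\chi^h_R(x,hD)^*(1-\psi^h(x,hD))$ and $J_{S^h}(b^h)(1-\psi^h(x,hD))$. Part (2) proceeds identically, using Remark \ref{remark_WKB_1} in place of Theorem \ref{theorem_WKB_1} and the time range $|\tau|<\delta$.
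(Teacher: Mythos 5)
Your overall plan---rescale $t\mapsto\tau=t/h$, invoke Theorem \ref{theorem_WKB_1} (resp.\ Remark \ref{remark_WKB_1}) for the adjoint side, bound the FIO main term by \eqref{theorem_WKB_1_5}, and absorb the remainder $Q^h$ via $L^2$-based estimates---is correct and matches the paper's. Two of your steps deviate. The $\Psi$DO--FIO composition producing $J_{S^h}(c^h)$ is unnecessary: $\chi_R^h(x,hD)$ is bounded on $L^\infty$ uniformly in $h$ by \eqref{pdo_1}, so $\norm{\chi_R^h(x,hD)J_{S^h}(b^h)}_{L^1\to L^\infty}\lesssim\norm{J_{S^h}(b^h)}_{L^1\to L^\infty}\lesssim|t|^{-d/2}$ directly; moreover the $O_{L^2\to L^2}(h^N)$ error your composition would leave behind sits in the same position as $Q^h$ and requires the same treatment, so nothing is gained.

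The genuine gap is the step $Q^h(\tau,N)=Q^h(\tau,N)\psi^h(x,hD)+O(h^\infty)$. The norm you need there is $L^1\to L^\infty$ (equivalently $L^1\to L^2$ after left-composition with $\chi_R^h(x,hD)$), and while the $\Psi$DO part of $Q^h$ is disposed of by Lemma \ref{lemma_pdo_1}, the FIO part $J_{S^h}(b^h)(1-\psi^h(x,hD))$ requires a genuine FIO--$\Psi$DO composition together with a non-stationary phase estimate on the composed kernel, which you flag as ``the main technical step'' but do not carry out. The paper sidesteps this entirely by factoring the adjoint \emph{before} applying the parametrix: by \eqref{pdo_3} and Lemma \ref{lemma_pdo_1}, there are $\chi_{0,R}^h,\chi_1^h\in S(1,g)$ with $\supp\chi_{0,R}^h\subset\supp\chi_R^h$ such that $\chi_R^h(x,hD)^*=\chi_{0,R}^h(x,hD)\chi_1^h(x,hD)+O_{L^p\to L^q}(h^N)$. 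Applying Theorem \ref{theorem_WKB_1}(2) to $e^{-i\tau P^h/h}\chi_{0,R}^h(x,hD)$ then leaves the cutoff $\chi_1^h(x,hD)$ already to the right of $Q^h$, and the $L^1\to L^\infty$ bound for $\chi_R^h(x,hD)\,Q^h(\tau,N)\,\chi_1^h(x,hD)$ follows by sandwiching $\norm{Q^h}_{L^2\to L^2}\lesssim h^{N-1-2/m}$ between $\norm{\chi_1^h(x,hD)}_{L^1\to L^2}\lesssim h^{-d/2}$ and $\norm{\chi_R^h(x,hD)}_{L^2\to L^\infty}\lesssim h^{-d/2}$, with no pseudolocality argument needed. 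Replacing your pseudolocality step with this factorization would close the gap.
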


\begin{proof}
The expansion formula \eqref{pdo_3} and Lemma \ref{lemma_pdo_1}shows that there exists symbols $\chi_{0,R}^h,\chi_1^h\in S(1,g)$ with $\chi_{0,R}^h\subset\supp\chi_R^h$ such that 
$$
\chi_R^h(x,hD)^*=\chi_{0,R}^h(x,hD)\chi^h_1(x,hD)+O_{L^p\to L^q}(h^N)
$$ 
for any $1\le p\le q\le\infty$ and any $N\ge0$. We hence can replace $\chi_R^h(x,hD)^*$ by $\chi_{0,R}^h(x,hD)\chi^h_1(x,hD)$ without loss of generality. Then, the assertion follows from \eqref{theorem_WKB_1_4}, \eqref{theorem_WKB_1_5} and \eqref{pdo_1}. 
\end{proof}

Using Theorem \ref{theorem_PT_1}, the $TT^*$-argument and Duhamel formula, one can obtain following semiclassical Strichartz estimates with inhomogeneous terms. The proof is same as that of \cite[Proposition 7.4]{Mizutani3} (see also \cite[Section 5]{Bouclet_Tzvetkov_1}) and we omit it.

\begin{proposition}
\label{proposition_PT_2}
Let $T>0$ and $(p,q)$ satisfy \eqref{admissible_pair}. Under conditions in Theorem \ref{theorem_PT_1}, there exists $C,C_T>0$ such that
\begin{equation}
\begin{aligned}
\label{proposition_PT_2_1}
\norm{\chi^h_R(x,hD)e^{-itP}u_0}_{L^p_TL^q}
&\le
C_Th\norm{u_0}_{L^2}
+C_T\norm{\chi^h_R(x,hD)u_0}_{L^2}\\
&+C(hR)^{-1/2}\norm{\chi^h_R(x,hD)e^{-itP}u_0}_{L^2_TL^2}\\
&+C(hR)^{1/2}\norm{[P,\chi^h_R(x,hD)]e^{-itP}u_0}_{L^2_TL^2},
\end{aligned}
\end{equation}
\begin{equation}
\begin{aligned}
\label{proposition_PT_2_2}
\!\!\!\!\!\!\!\!\!\!\norm{\chi^h(x,hD)e^{-itP}u_0}_{L^p_TL^q}
&\le 
C_Th\norm{u_0}_{L^2}
+C_T\norm{\chi^h(x,hD)u_0}_{L^2}\\
&+Ch^{-1/2}\norm{\chi^h(x,hD)e^{-itP}u_0}_{L^2_TL^2}\\
&+Ch^{1/2}\norm{[P,\chi^h(x,hD)]e^{-itP}u_0}_{L^2_TL^2},
\end{aligned}
\end{equation}
uniformly with respect to $h\in(0,1]$ and $1\le R\le h^{-2/m}$, where $C>0$ is independent of $T$. 
\end{proposition}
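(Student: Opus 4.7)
The plan is to follow the standard $TT^*$/covering scheme of Keel--Tao combined with the argument of \cite[Section 5]{Bouclet_Tzvetkov_1} and \cite[Proposition 7.4]{Mizutani3}. The proofs of \eqref{proposition_PT_2_1} and \eqref{proposition_PT_2_2} are parallel, so I describe only the former; the latter follows with $R=1$ and $\tau := \delta h$ in place of $\tau := \delta hR$.

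First, I would combine the dispersive estimate \eqref{theorem_PT_1_1} with the uniform $L^2$-boundedness of $\chi^h_R(x,hD) e^{-itP}$ and apply the Keel--Tao endpoint $TT^*$ theorem on any time interval $J$ with $|J| \le \tau$, obtaining a short-time semiclassical Strichartz estimate
\[
\norm{\chi^h_R(x,hD) e^{-itP} f}_{L^p(J; L^q)} \le C\norm{f}_{L^2},
\]
uniform in $h \in (0,1]$, $1 \le R \le h^{-2/m}$, and $J$. I would then partition $[-T,T]$ into $N_T \sim T/(hR)$ sub-intervals $J_k$ of length $\tau$, choose representative times $t_k^* \in J_k$, and aim to bound $\norm{v}_{L^p(J_k;L^q)}$ --- where $v(t) := \chi^h_R(x,hD) e^{-itP} u_0$ and $u(t) := e^{-itP} u_0$ --- essentially by $\norm{\chi^h_R(x,hD) u(t_k^*)}_{L^2}$ plus commutator corrections. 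Replacing the naive short-time bound $C\norm{u(t_k^*)}_{L^2} = C\norm{u_0}_{L^2}$ by the sharper $C\norm{\chi^h_R u(t_k^*)}_{L^2}$ is crucial, and is achieved using a companion symbol $\tilde\chi^h_R \in S(1,g)$ with $\tilde\chi^h_R \equiv 1$ on $\supp \chi^h_R$ together with Lemma \ref{lemma_pdo_1} (which yields $\chi^h_R = \chi^h_R \tilde\chi^h_R + O(h^N)$ in all relevant operator norms) and the commutator Duhamel identity
\[
\chi^h_R(x,hD) e^{-i\sigma P} = e^{-i\sigma P} \chi^h_R(x,hD) + i\int_0^\sigma e^{-i(\sigma - s)P}[P, \chi^h_R(x,hD)] e^{-isP}\,ds.
\]

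Since $p \ge 2$ and hence $\ell^2 \hookrightarrow \ell^p$,
\[
\norm{v}_{L^p_T L^q} \le \Bigl(\sum_k \norm{v}_{L^p(J_k;L^q)}^2\Bigr)^{1/2} \lesssim \Bigl(\sum_k \norm{\chi^h_R(x,hD) u(t_k^*)}_{L^2}^2\Bigr)^{1/2} + \text{(commutator and remainder errors)}.
\]
The discrete sum is converted to the continuous $L^2_T L^2$-norm via the identity
\[
\frac{d}{dt} \norm{\chi^h_R(x,hD) u(t)}_{L^2}^2 = -2 \operatorname{Im} \<[\chi^h_R(x,hD), P] u(t), \chi^h_R(x,hD) u(t)\>,
\]
which, integrated over $J_k$, yields $\norm{\chi^h_R u(t_k^*)}_{L^2}^2 \le \tau^{-1} \int_{J_k} \norm{\chi^h_R u(t)}_{L^2}^2\,dt + C\int_{J_k} \norm{[\chi^h_R, P] u}_{L^2}\norm{\chi^h_R u}_{L^2}\,dt$ for $t_k^*$ chosen to realize the time average. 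Summing over $k$, using $\tau = \delta hR$, and applying AM--GM with balancing weight $(hR)^{1/2}$ recovers the weights $(hR)^{-1/2}$ and $(hR)^{1/2}$ on the last two terms of \eqref{proposition_PT_2_1}. The $C_T h \norm{u_0}_{L^2}$ term collects the $O(h^N)$ pseudodifferential remainders accumulated over $N_T \sim T/(hR)$ intervals (with $N$ sufficiently large), while $C_T \norm{\chi^h_R u_0}_{L^2}$ accounts for the sub-interval containing $t=0$, where the actual initial data $\chi^h_R u_0$ is used in place of a representative $\chi^h_R u(t_k^*)$.

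The principal obstacle will be the careful implementation of the refined local bound $\norm{v}_{L^p(J_k;L^q)} \lesssim \norm{\chi^h_R(x,hD) u(t_k^*)}_{L^2}$: a direct application of the short-time Strichartz would only yield $\norm{u_0}_{L^2}$ on the right, so that summation over $N_T \sim T/(hR)$ intervals would inflate the estimate by a factor of $N_T^{1/2}$ and destroy it as $h \to 0$. The interplay between the companion-symbol reduction via Lemma \ref{lemma_pdo_1}, the commutator Duhamel identity, and the pseudodifferential remainder estimates \eqref{pdo_2}--\eqref{pdo_3} is the heart of the argument and also dictates the precise scaling weights $(hR)^{\pm 1/2}$ in the local smoothing terms.
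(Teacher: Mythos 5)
Your proposal reconstructs precisely the argument from \cite[Proposition 7.4]{Mizutani3} and \cite[Section 5]{Bouclet_Tzvetkov_1}, which is exactly what the paper cites and omits; the scheme (short-time Keel--Tao via dispersive estimate, time-slicing into $\sim T/(hR)$ intervals, companion symbol plus Duhamel, $\ell^2\hookrightarrow\ell^p$, and conversion of the discrete sum to $L^2_TL^2$ via the derivative identity with AM--GM balancing) is the correct one. One bookkeeping fix: to land on $\chi^h_R$ rather than $\wtilde\chi^h_R$ in the final commutator and $L^2_TL^2$ terms, you should take the opposite ordering from Lemma \ref{lemma_pdo_1}, namely $\chi^h_R(x,hD)=\wtilde\chi^h_R(x,hD)\chi^h_R(x,hD)+O(h^N)$, apply your Duhamel identity to $\chi^h_R u(t)$, and then invoke the short-time Strichartz bound for $\wtilde\chi^h_R(x,hD)e^{-itP}$ (which follows from Theorem \ref{theorem_PT_1} applied to the slightly enlarged support of $\wtilde\chi^h_R$) on the resulting $\wtilde\chi^h_R e^{-i(t-t_k)P}\chi^h_R u(t_k)$ and inhomogeneous terms; with the factorization $\chi^h_R\wtilde\chi^h_R$ as you wrote it, Duhamel on $\wtilde\chi^h_R u$ would leave the companion symbol, not $\chi^h_R$, in the final estimate.
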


\begin{proof}[Proof of Theorem \ref{theorem_1}]
First of all, Proposition \ref{proposition_LP_1} and Minkowski's inequality show
$$
\norm{e^{-itP}u_0}_{L^p_TL^q}\lesssim \norm{u_0}_{L^2}+\sum_{k=0,1}\Big(\sum_h\norm{\Psi_k^h(x,hD)e^{-itP}u_0}_{L^p_TL^q}^2\Big)^{1/2},\quad p,q\ge2,
$$
with $\Psi_k^h\in S(1,h^{4/m}dx^2+d\xi^2/\<\xi\>^2)$ defined in Proposition \ref{proposition_LP_1} satisfying 
\begin{align*}
\supp \Psi_0^h&\subset\{\<x\>\le C_\ep h^{-2/m},\ C_0^{-1}\le |\xi|\le C_0\},\\
\supp\Psi_1^h&\subset\{C_\ep^{-1}h^{-2/m} \le\<x\>\le C_\ep h^{-2/m},\ |\xi|\le C_0\},
\end{align*}
where $C_\ep,C_0>0$ are independent of $x,\xi$ and $h$. 

We first study $\Psi_1^h(x,hD)e^{-itP}$. The expansion formula \eqref{pdo_2} shows
\begin{align*}
\supp \Sym([P,\Psi_1^h(x,hD)])\subset \supp \Psi_1^h,\quad
\Sym([P,\Psi_1^h(x,hD)])\in S(h^{-1+2/m},g).
\end{align*}
Choose $\wtilde\theta\in C_0^\infty(\R)$ supported away from the origin and $\wtilde\psi_1\in S(1,g)$ supported in $\{(x,\xi);|\xi|^2\le 2\ep\<x\>^m\}$ such that $\wtilde\theta\equiv1$ on $\supp \theta$ and $\wtilde\psi\equiv1$ on $\supp \psi_1$, where $\psi_1$ has been defined in  Subsection \ref{LP}. If we set $\wtilde Psi_1^h(x,\xi)=\wtilde\theta(h^{2/m}x)\wtilde\psi_1(x,\xi/h)$ then $\wtilde\Psi_1^h\in S(1,g)$ with uniform bounds in $h$ and $\wtilde\Psi_1^h\equiv1$ on $\supp \Psi_1^h$. Furthermore, Lemma \ref{lemma_pdo_1} shows
$$
[P,\Psi_1^h(x,hD)]=[P,\Psi_1^h(x,hD)]\wtilde\Psi_1^h(x,hD)+O_{L^2\to L^2}(h).
$$
Therefore, 
\begin{equation}
\begin{aligned}
\label{PT_0}
&\norm{[P,\Psi_1^h(x,hD)]e^{-itP}u_0}_{L^2_TL^2}\\
&\le C
h^{-1/2+1/m}\norm{\wtilde\Psi_1^h(x,hD)e^{-itP}u_0}_{L^2_TL^2}+C_Th\norm{u_0}_{L^2}.
\end{aligned}
\end{equation}
Applying Proposition \ref{proposition_PT_2} to $\Psi_1^h(x,hD)e^{-itP}$ with $R=h^{-2/m}$ and using \eqref{PT_0}, we then obtain
\begin{align*}
&\norm{\Psi_1^h(x,hD)e^{-itP}u_0}_{L^p_TL^q}\\
&\le 
C_Th\norm{u_0}_{L^2}
+C_T\norm{\Psi_1^h(x,hD)u_0}_{L^2}
+Ch^{-1/2+1/m}\norm{\wtilde\Psi_1^h(x,hD)e^{-itP}u_0}_{L^2_TL^2}\\
&\le 
C_Th\norm{u_0}_{L^2}
+C\norm{\theta(h^{2/m}x)\psi_1(x,D)u_0}_{L^2}\\
&\quad+C\norm{\wtilde\theta(h^{2/m}x)\<x\>^{m/4-1/2}\wtilde\psi_1(x,D)e^{-itP}u_0}_{L^2_TL^2},
\end{align*}
where, in the last line, we have used the fact that 
$
h^{-1/2+1/m}\approx \<x\>^{m/4-1/2}$ on $\supp \wtilde \Psi_1^h.
$ 
Combining this estimate with the following the norm equivalence:
$$
\norm{v}_{L^2}^2
\approx\sum_h\norm{\theta(h^{2/m}x)v}_{L^2}^2
\approx\sum_h\norm{\wtilde\theta(h^{2/m}x)v}_{L^2}^2
$$
(which follows from the almost orthogonality of $\theta(h^{2/m}x)$ and $\wtilde \theta(h^{2/m}x)$), we have
\begin{align*}
\sum_h\norm{\Psi_1^h(x,hD)e^{-itP}u_0}_{L^p_TL^q}^2
\le 
C_T\norm{u_0}_{L^2}^2
+C\norm{\<x\>^{m/4-1/2}\wtilde\psi_1(x,D)e^{-itP}u_0}_{L^2_TL^2}^2.
\end{align*}
On the other hand, since 
$
\<x\>^{m/4-1/2}\wtilde\psi_1e_{-1/2+1/m}\in S(1,g)
$, it is easy to see that 
$\<x\>^{m/4-1/2}\wtilde\psi_1(x,D)E_{1/2-1/m}^{-1}$ is bounded on $L^2$. Combining with Lemma \ref{lemma_LS_2} we obtain
\begin{align*}
\norm{\<x\>^{m/4-1/2}\wtilde\psi_1(x,D)e^{-itP}u_0}_{L^2_TL^2}^2
&\le C\norm{E_{1/2-1/m}e^{-itP}u_0}_{L_T^2L^2}^2\\
&\le C\norm{E_{1/2-1/m}u_0}_{L^2}^2\int_0^Tdt\\
&\le C_T\norm{E_{1/2-1/m}u_0}_{L^2}^2, 
\end{align*}
and hence
\begin{align}
\label{PT_1}
\sum_h\norm{\Psi_1^h(x,hD)e^{-itP}u_0}_{L^p_TL^q}^2
\le C_T\norm{E_{1/2-1/m}u_0}_{L^2}^2. 
\end{align}

Next we study $\Psi_0^h(x,hD)e^{-itP}u_0$. Choose a dyadic partition of unity:
$$
\varphi_{-1}(x)+\sum_{0 \le j \le j_h}\varphi(2^{-j}x)=1,\quad x\in \pi_x(\supp \Psi_0^h),
$$
where $j_h\lesssim (2/m)\log(1/h)$ and $\varphi_{-1},\varphi \in C_0^\infty(\R^d)$ with $\supp\varphi_{-1}\subset\{|x|<1\}$ and $\supp \varphi\subset\{1/2<|x|<2\}$. We set $\varphi_j(x)=\varphi(2^{-j}x)$ for $j\ge0$. Since $p,q\ge2$, it follows from Minkowski's inequality that
$$
\norm{\Psi_0^h(x,hD)e^{-itP}u_0}_{L^p_TL^q}^2\\
\le
\sum_{-1\le j\le j_h}\norm{\varphi_j(x)\Psi_0^h(x,hD)e^{-itP}u_0}_{L^p_TL^q}^2.
$$
We here take cut-off functions $\wtilde\varphi_{-1},\wtilde \varphi \in C_0^\infty(\R^d)$ and $\wtilde\Psi_0^h \in S(1,g)$ supported in a small neighborhood of $\supp\varphi_{-1}$, $\supp \varphi$ and $\supp\Psi_0^h$, respectively, so that $\wtilde\varphi_{-1}\equiv1$ on $\supp \varphi_{-1}$, $\wtilde\varphi\equiv1$ on $\supp \varphi$ and $\wtilde\Psi_0^h\equiv1$ on $\supp \Psi_0^h$. 
Set $\wtilde\varphi_j(x)=\wtilde\varphi(2^{-j}x)$ for $j\ge0$. Then, 
$$
\supp\wtilde\varphi_j\wtilde\Psi_0^h\subset\{|x|\approx 2^j,\ |\xi|\approx 1\},\quad
\wtilde\varphi_j\wtilde\Psi_0^h\equiv1\ \text{on}\ \supp \varphi_j\Psi_0^h.
$$ 
Since the symbolic calculus shows
\begin{align*}
&\supp \Sym([P,\varphi_j(x)\Psi_0^h(x,hD)])\subset\supp (\varphi_j\Psi_0^h),\\
&\Sym([P,\varphi_j(x)\Psi_0^h(x,hD)])\in S(2^{-j}h^{-1},g),
\end{align*}
applying Proposition \ref{proposition_PT_2} with $R=2^j$, we learn by a similar argument as above that
\begin{align*}
&\norm{\varphi_j(x)\Psi_0^h(x,hD)e^{-itP}u_0}_{L^p_TL^q}\\
&\le 
C_Th\norm{u_0}_{L^2}
+C_T\norm{\varphi_j(x)\Psi_0^h(x,hD)u_0}_{L^2}\\
&\quad+C(h2^j)^{-1/2}\norm{\wtilde\varphi_j(x)\wtilde\Psi_0^h(x,hD)e^{-itP}u_0}_{L^2_TL^2}\\
&\le 
C_Th\norm{u_0}_{L^2}
+C_T\norm{\varphi_j(x)\Psi_0^h(x,hD)u_0}_{L^2}\\
&\quad +C\norm{\wtilde\varphi_j(x)\<x\>^{-1/2}\<D\>^{1/2}\wtilde\Psi_0^h(x,hD)e^{-itP}u_0}_{L^2_TL^2}.
\end{align*}
The almost orthogonality of $\varphi_j$ and $\wtilde\varphi_j$ then yields
\begin{align*}
&\sum_{-1\le j\le j_h}\norm{\varphi_j(x)\Psi_0^h(x,hD)e^{-itP}u_0}_{L^p_TL^q}^2\\
&\le
C_Th^2\norm{u_0}_{L^2}^2
+C_T\norm{\Psi_0^h(x,hD)u_0}_{L^2}^2\\
&\quad+C\norm{\<x\>^{-1/2}\<D\>^{1/2}\wtilde\Psi_0^h(x,hD)e^{-itP}u_0}_{L^2_TL^2}^2.
\end{align*}
We further obtain by the symbolic calculus that
\begin{equation}
\begin{aligned}
\label{PT_0_0}
&\norm{\<x\>^{-1/2}\<D\>^{1/2}\wtilde\Psi_0^h(x,hD)e^{-itP}u_0}_{L^2_TL^2}\\
&\le C
\norm{\wtilde\Psi_0^h(x,hD)\<x\>^{-1/2}E_{1/2}e^{-itP}u_0}_{L^2_TL^2}+Ch^{1/2}\norm{u_0}_{L^2}.
\end{aligned}
\end{equation}
Indeed, we compute
\begin{equation}
\begin{aligned}
\label{PT_1_0}
&\<x\>^{-1/2}\<D\>^{1/2}\wtilde\Psi_0^h(x,hD)\\
&=\<x\>^{-1/2}\<D\>^{1/2}\cdot\wtilde\Psi_0^h(x,hD)E_{1/2}^{-1}\<x\>^{1/2}\cdot\<x\>^{-1/2}E_{1/2}\\
&=\<x\>^{-1/2}\<D\>^{1/2}E_{1/2}^{-1}\<x\>^{1/2}\cdot\wtilde\Psi_0^h(x,hD)\<x\>^{-1/2}E_{1/2}\\
&+\<x\>^{-1/2}\<D\>^{1/2}[E_{1/2}^{-1}\<x\>^{1/2},\wtilde\Psi_0^h(x,hD)]\<x\>^{-1/2}E_{1/2}.
\end{aligned}
\end{equation}
Since $E_{1/2}^{-1}$ has the symbol $\wtilde e_{-1/2}\in S(e_{-1/2},g)$ and $\<\xi\>^{1/2}
 e_{-1/2}(x,\xi)\lesssim1$, 
 $$
 \<x\>^{-1/2}\<D\>^{1/2}E_{1/2}^{-1}\<x\>^{1/2}
 $$ 
 is bounded on $L^2$. 
On the other hand, by the symbolic calculus, we see that 
\begin{align*}
&\<x\>^{-1/2}\<\xi\>^{1/2}\{e_{-1/2}\<x\>^{1/2},\wtilde\Psi_0^h(x,h\xi)\}\<x\>^{-1/2}e_{1/2}\\
&=O(\<x\>^{-3/2}h\<\xi\>^{1/2}+\<x\>^{-3/2}\<\xi\>^{-1/2})\\
&=O(\<x\>^{-3/2}h^{1/2})
\end{align*}
since $\<\xi\>\approx h^{-1}$ on $\supp \wtilde\Psi_0^h(\cdot,h\cdot)$. Therefore, the last term of \eqref{PT_1_0} is bounded on $L^2$ with the bound $O(h^{1/2})$. We thus obtain \eqref{PT_0_0}. 

Next we choose a smooth cut-off function $\wtilde\theta \in C_0^\infty(\R^d)$ supported away from the origin such that $\wtilde \theta\equiv1$ on $\pi_\xi(\supp \Psi_0^h)$. Lemma \ref{pdo_1} then yields
$$
\norm{\Psi_0^h(x,hD)(1-\wtilde\theta(hD))}_{L^2\to L^q}+\norm{\wtilde\Psi_0^h(x,hD)(1-\wtilde\theta(hD))}_{L^2\to L^q} \le Ch
$$
for $2\le q\le\infty$ and $h\in(0,1]$. We hence may replace $\Psi_0^h(x,hD)$ and $\wtilde\Psi_0^h(x,hD)$ by $\Psi_0^h(x,hD)\wtilde\theta(hD)$ and $\wtilde\Psi_0^h(x,hD)\wtilde\theta(hD)$, respectively. 
Then the $L^2$-boundedness of $\wtilde\Psi_0^h(x,hD)$ and the almost orthogonality of $\wtilde\theta(h\xi)$ imply
\begin{align*}
\sum_h\Big(
h^{2}\norm{u_0}_{L^2}^2+\norm{\Psi_0^h(x,hD)\wtilde\theta(hD)u_0}_{L^2}^2\Big)
&\le C \norm{u_0}_{L^2}^2,\\
\sum_h\norm{\wtilde\Psi_0^h(x,hD)\wtilde\theta(hD)\<x\>^{-1/2}E_{1/2}e^{-itP}u_0}_{L^2_TL^2}^2
&\le C
\norm{\<x\>^{-1/2}E_{1/2}e^{-itP}u_0}_{L^2_TL^2}^2.
\end{align*}
Furthermore, since $\<x\>^{m\nu/2}\le e_{\nu}(x,\xi)$ for any $\nu\ge0$, we have
$$
\norm{\<x\>^{-1/2}E_{1/2}e^{-itP}u_0}_{L^2_TL^2}\le C\norm{\<x\>^{-1/2-m\nu/2}E_{1/2+\nu}e^{-itP}u_0}_{L^2_TL^2}. 
$$
We now take $\nu>0$ arbitrarily and apply Proposition \ref{proposition_LS_3} with $s=1/2-1/m+\nu$ to obtain
\begin{align}
\label{PT_2}
\sum_h\norm{\Psi_0^h(x,hD)e^{-itP}u_0}_{L^p_TL^q}^2\le C_{T,\nu} \norm{E_{1/2-1/m+\nu}u_0}_{L^2}^2.
\end{align}

Summering the estimates \eqref{PT_1} and \eqref{PT_2} we conclude
\begin{align*}
\norm{e^{-itP}u_0}_{L^p_TL^q}
\le C_{T,\nu} \norm{E_{1/2-1/m+\nu}u_0}_{L^2}\le C_{T,\nu}\norm{u_0}_{\B^{\frac12-\frac1m+\nu}}
\end{align*}
for any admissible pair $(p,q)$ with $q<\infty$ and $\nu>0$. 
Finally, if $d\ge3$, then Theorem \ref{theorem_1} can be verified by interpolation the $L^2_TL^{2d/(d-2)}$-estimate with the trivial $L^\infty_TL^2$-estimate. 
For $d=2$, let us fix $\ep>0$ and an admissible pair $(p,q)$ arbitrarily and choose an admissible pair $(p_0,q_0)$ with $2<p_0<p$ and $\nu>0$ so that 
$$
\left(\frac12-\frac1m+\nu\right)\frac{p_0}{p}=\left(\frac12-\frac1m\right)\frac 2p+\ep.
$$ 
Interpolating the $L^{p_0}_TL^{q_0}$-estimate with the $L^\infty_TL^2$-estimate, we have
$$
\norm{e^{-itP}u_0}_{L^p_TL^q}\le C_{T,\nu}\norm{u_0}_{\B^{\frac1p\left(1-\frac1m\right)+\ep}}.
$$
We refer to \emph{e.g.}, \cite{Stein_Weiss} for the interpolation in weighted spaces. 
\end{proof}

\begin{proof}[Proof of Remark \ref{remark_trapping}]
Let $d\ge3$. By the above argument we have
$$
\norm{e^{-itP}u_0}_{L^2_TL^{2d/(d-2)}}
\le C\norm{\<x\>^{-1/2}E_{1/2}e^{-itP}u_0}_{L^2_TL^2}+C\norm{E_{1/2-1/m}u_0}_{L^2}.
$$
Then Lemma \ref{lemma_LS_2} shows that the right hand side is dominated by $C_T\norm{E_{1/2}u_0}_{L^2}$, which proves Remark \ref{remark_trapping} (2) for the endpoint case $(p,q)=(2,2d/(d-2)$. Non-endpoint estimates are verified by the interpolation theorem. 
\end{proof}

Next we prove Theorem \ref{theorem_2}. Hence, in what follows (in this section), we suppose that $H=\frac12(D-A(x))^2+V(x)$ satisfies Assumption \ref{assumption_A}. In this case, we first obtain a slightly long-time dispersive estimate which is better than Theorem \ref{theorem_PT_1} (2).

\begin{theorem}
\label{theorem_PT_3}
Let $I\Subset(0,\infty)$ be an interval and $\delta>0$ small enough. Then, for any $h\in(0,1]$ and symbol $\chi^h\in S(1,g)$ supported in $\Gamma^h(L)$,  
$$
\norm{\chi^h(x,hD)e^{-itH}\chi^h(x,hD)^*}_{L^1 \to L^\infty} \le C_\delta|t|^{-d/2},\quad 0<|t|<\delta h^{1-2/m}. 
$$
\end{theorem}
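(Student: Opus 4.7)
The plan is to mimic part (2) of Theorem \ref{theorem_PT_1} essentially verbatim, with the sole but crucial change that the short-time WKB parametrix of Theorem \ref{theorem_WKB_1} is replaced by the long-time flat-case parametrix of Theorem \ref{theorem_WKB_2}. The scaling identity $e^{-itH^h/h} = e^{-ithH}$ (since $H^h/h = hH$) converts the semiclassical parametrix, valid for $|t| \le \delta h^{-2/m}$, into a parametrix for the physical evolution $e^{-isH}$ valid on $|s|\le \delta h^{1-2/m}$. In physical time $s = th$ the kernel bound $\min\{h^{-d}, |th|^{-d/2}\}$ becomes $\min\{h^{-d}, |s|^{-d/2}\}$, which is dominated by $C|s|^{-d/2}$ on every $0 < |s| < \delta h^{1-2/m}$.

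First I would invoke the adjoint expansion \eqref{pdo_3} together with Lemma \ref{lemma_pdo_1} to factor
$$
\chi^h(x,hD)^* = \chi_0^h(x,hD)\chi_1^h(x,hD) + O_{L^1\to L^\infty}(h^N),
$$
where $\chi_0^h, \chi_1^h \in S(1,g)$ and $\supp \chi_0^h \subset \Gamma^h(L)$. Then Theorem \ref{theorem_WKB_2} applied to this $\chi_0^h$ furnishes $e^{-isH}\chi_0^h(x,hD) = J_{S^h}(a^h) + Q^h(s,N)$ on the whole range $|s|\le \delta h^{1-2/m}$. Composing on the left by $\chi^h(x,hD)$ and on the right by $\chi_1^h(x,hD)$, the main term $\chi^h(x,hD) J_{S^h}(a^h) \chi_1^h(x,hD)$ retains a kernel dominated by $\min\{h^{-d},|s|^{-d/2}\}$, because pseudodifferential operators with symbols in $S(1,g)$ whose $\xi$-support lies in a fixed compact set are uniformly bounded on $L^1$ and on $L^\infty$ (by Schur's lemma applied to their rapidly decaying integral kernels).

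For the remainder, the $L^2\to L^2$ estimate $\norm{Q^h(s,N)}_{L^2\to L^2} \le C_N h^{N-1-2/m}$ from Theorem \ref{theorem_WKB_2}, combined with \eqref{pdo_1} (giving $\chi^h(x,hD): L^2\to L^\infty$ and $\chi_1^h(x,hD): L^1\to L^2$ with norms $O(h^{-d/2})$), yields an $L^1\to L^\infty$ bound of order $h^{N-1-2/m-d}$. This is absorbed into $|s|^{-d/2}$ throughout $|s| < \delta h^{1-2/m}$ provided $N$ is chosen large enough that $N - 1 - 2/m - d \ge -\tfrac{d}{2}(1-2/m)$, i.e., $N \ge 1 + 2/m + d/2 + d/m$.

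The proof encounters no genuine obstacle once Theorem \ref{theorem_WKB_2} is in hand. The only delicate point is keeping precise track of the conversion between the semiclassical time of the WKB parametrix and the physical time appearing in the statement; the improved range $|t| < \delta h^{1-2/m}$ is ultimately purchased from the flat-case classical flow estimates of Lemma \ref{lemma_A_1}, which supply the extended non-trapping time scale needed to push Theorem \ref{theorem_WKB_2} all the way out to $|t| \le \delta h^{-2/m}$ in semiclassical time.
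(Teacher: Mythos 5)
Your proof is correct, and it is essentially the argument the paper intends: Theorem~\ref{theorem_PT_3} is stated without an explicit proof, the author having left it implicit that one repeats the proof of Theorem~\ref{theorem_PT_1}(2) verbatim, only replacing the short-time parametrix of Theorem~\ref{theorem_WKB_1} (via Remark~\ref{remark_WKB_1}) by the flat-case parametrix of Theorem~\ref{theorem_WKB_2}, whose validity window $|t|\le\delta h^{-2/m}$ in semiclassical time becomes $|s|\le\delta h^{1-2/m}$ in physical time under $s=th$. Your bookkeeping of the adjoint factorization via \eqref{pdo_3} and Lemma~\ref{lemma_pdo_1}, the persistence of the $\min\{h^{-d},|s|^{-d/2}\}$ kernel bound under composition with $h$-$\Psi$DOs bounded on $L^1$ and $L^\infty$ by \eqref{pdo_1}, and the absorption of the $O(h^{N-1-2/m-d})$ remainder into $|s|^{-d/2}$ for $N\ge 1+2/m+d/2+d/m$ are all accurate.
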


As in the previous argument, we then have the following:

\begin{proposition}
\label{proposition_PT_4}
Under conditions in Theorem \ref{theorem_PT_3}, we have
\begin{align*}
\norm{\chi^h(x,hD)e^{-itH}u_0}_{L^p_TL^q}
&\lesssim
C_Th\norm{u_0}_{L^2}
+C_T\norm{\chi^h(x,hD)u_0}_{L^2}\\
&+Ch^{-1/2+1/m}\norm{\chi^h(x,hD)e^{-itH}u_0}_{L^2_TL^2}\\
&+Ch^{1/2-1/m}\norm{[H,\chi^h(x,hD)]e^{-itH}u_0}_{L^2_TL^2},
\end{align*}
uniformly in $h\in(0,1]$. 
\end{proposition}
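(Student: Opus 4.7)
The proof of Proposition \ref{proposition_PT_4} follows the same template as Proposition \ref{proposition_PT_2}, whose proof is carried out in \cite[Section 5]{Bouclet_Tzvetkov_1} and \cite[Proposition 7.4]{Mizutani3}; the only modification is that the short-time scale on which Strichartz estimates are built is now $\tau:=\delta h^{1-2/m}$, rather than $\delta h$, thanks to the improved dispersive bound of Theorem \ref{theorem_PT_3}.

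First I would combine Theorem \ref{theorem_PT_3} with the uniform $L^2$-boundedness of $\chi^h(x,hD)$ and apply the Keel--Tao $TT^*$ theorem on an arbitrary time interval $I$ of length $\le \tau$. This yields, for every admissible pair $(p,q)$, the homogeneous short-time Strichartz estimate
\begin{align*}
\norm{\chi^h(x,hD)e^{-itH}f}_{L^p(I;L^q)} \lesssim \norm{f}_{L^2},
\end{align*}
together with its dual and its inhomogeneous $TT^*$ counterparts, uniformly with respect to the position of $I$.

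Next, I would partition $[-T,T]$ into $N=O(T\tau^{-1})$ subintervals $I_k$ of length $\tau$ and, on each $I_k$, bound $\norm{v}_{L^p(I_k;L^q)}$, where $v(t):=\chi^h(x,hD)e^{-itH}u_0$, via the Duhamel identity
\begin{align*}
v(t) = e^{-itH}\chi^h(x,hD)u_0 + \tfrac{1}{i}\int_0^t e^{-i(t-s)H}[\chi^h(x,hD),H]e^{-isH}u_0\,ds
\end{align*}
coupled with the short-time estimates from the previous step. The homogeneous piece $e^{-itH}\chi^h u_0$ is handled by introducing an enlarged symbol $\wtilde\chi^h\in S(1,g)$ with $\wtilde\chi^h\equiv 1$ on $\supp\chi^h$, writing $\chi^h u_0 = \wtilde\chi^h\chi^h u_0 + O_{L^2\to L^2}(h^N)u_0$ via Lemma \ref{lemma_pdo_1}, swapping $\wtilde\chi^h$ to the left of $e^{-itH}$ through a further Duhamel identity, and invoking the Strichartz bound for $\wtilde\chi^h e^{-itH}$ with initial datum $\chi^h u_0$; the pseudodifferential remainder is absorbed into $C_Th\norm{u_0}_{L^2}$ by taking $N$ sufficiently large. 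For the Duhamel integral, Minkowski's inequality together with the pointwise-in-$s$ homogeneous Strichartz estimate produces the factor $\tau^{1/2}=h^{1/2-1/m}$ in front of $\norm{[H,\chi^h]e^{-itH}u_0}_{L^2(I_k;L^2)}$, while the $TT^*$ form of the same estimate produces the complementary factor $\tau^{-1/2}=h^{-1/2+1/m}$ in front of $\norm{v}_{L^2(I_k;L^2)}$.

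Finally, the piecewise estimates must be summed in $k$. The main obstacle, which I anticipate to be the delicate point, is that a naive $\ell^p$-summation of the subinterval bounds would accumulate a factor $(T/\tau)^{1/p}$ that blows up as $h\to 0$; this is circumvented by expressing each piecewise estimate entirely in terms of $L^2$-in-time norms of $v$ and the commutator, so that their $\ell^2$-summability in $k$, combined with the embedding $\ell^2\hookrightarrow \ell^p$ valid for $p\ge 2$, upgrades the subinterval bounds to the global $L^2_TL^2$ norms on the right-hand side without any $h$-dependent multiplicative loss.
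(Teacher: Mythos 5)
Your proposal is correct and follows exactly the route the paper indicates: the author states that Proposition~\ref{proposition_PT_4} follows ``as in the previous argument,'' i.e., by the same $TT^*$ plus Duhamel scheme used for Proposition~\ref{proposition_PT_2} (itself deferred to \cite[Proposition 7.4]{Mizutani3} and \cite[Section 5]{Bouclet_Tzvetkov_1}), with Theorem~\ref{theorem_PT_3} supplying the dispersive bound on the enlarged time scale $\delta h^{1-2/m}$. You correctly identify that the subinterval length $\tau=\delta h^{1-2/m}$ yields the factors $\tau^{\pm1/2}=h^{\pm(1/2-1/m)}$, and the $\ell^2\hookrightarrow\ell^p$ summation over the $O(T/\tau)$ subintervals is exactly the mechanism that prevents the accumulated loss; the one small imprecision is attributing the factor $\tau^{-1/2}\norm{v}_{L^2(I_k;L^2)}$ to ``the $TT^*$ form,'' whereas it really comes from averaging the Duhamel formula over the starting point $s\in I_k$ and applying Cauchy--Schwarz in $s$, but this does not affect the validity of the argument.
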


\begin{proof}[Proof of Theorem \ref{theorem_2}]
The proof is analogous to that of Theorem \ref{theorem_1}. The only difference compared to the previous one is the following fact:
\begin{align}
\label{PT2_1}
\Sym([H,\Psi_0^h(x,hD)])=h^{-2}\Sym([H^h,\Psi_0^h(x,hD)])\in S(h^{-1+2/m},g).
\end{align}
(Recall that, in general case, we only have $\Sym([P,\Psi_0^h](x,hD))\in S(h^{-1}\<x\>^{-1},g)$.)
Indeed, since $\Psi_0^h\in S(1,h^{4/m}dx^2+d\xi^2)$ and $\<x\>^{m/2}\lesssim h^{-1}$ on $\supp \Psi_0^h$, we have
\begin{align*}
\{(\xi-hA)^2,\Psi_0^h\}
&=2(\xi-hA)\cdot\partial_x\Psi_0^h-2h^2\partial_xA(\xi-A)\cdot\partial_\xi\Psi_0^h\\
&=O(h^{2/m}+h^{1+2/m}\<x\>^{m/2}+h^2\<x\>^{m-1})
=O(h^{2/m}).
\end{align*}
We similarly obtain $\{p_1^h,\Psi_0^h\}=O(h^{2/m}\<x\>^{-1})$ and $\{h^2V,\Psi_0^h\}=O(h^{2/m})$. Therefore
$$
\Sym([H^h,\Psi_0^h(x,hD)])=\frac hi\{(\xi-hA)^2/2+hp_1^h+h^2V,\Psi_0^h\}+O(h^2)=O(h^{1+2/m})
$$
in $S(1,g)$. Applying Proposition \ref{proposition_PT_4} to $\Psi_0^h(x,hD)e^{-itH}u_0$ and using the same argument as above and \eqref{PT2_1}, we have
\begin{align*}
&\norm{\Psi_0^h(x,hD)e^{-itH}u_0}_{L^2_TL^{\frac{2d}{d-2}}}\\
&\le 
C_T\norm{\Psi_0^h(x,hD)u_0}_{L^2}+C_Th\norm{u_0}_{L^2}
+C\norm{\wtilde\Psi_0^h(x,hD)E_{1/2-1/m}e^{-itH}u_0}_{L^2_TL^2}.
\end{align*}
By the almost orthogonality of the $\xi$-support of $\Psi_0^h(x,h\xi)$ and Lemma \ref{lemma_LS_2}, we then conclude
\begin{align*}
\sum_h\norm{\Psi_0^h(x,hD)e^{-itH}u_0}_{L^2_TL^{\frac{2d}{d-2}}}^2
&\le
C_T\norm{u_0}_{L^2}^2
+C\norm{E_{1/2-1/m}e^{-itH}u_0}_{L^2_TL^2}^2\\
&\le 
C_T\norm{E_{1/2-1/m}u_0}_{L^2}^2.
\end{align*}
which, together with the estimates \eqref{PT_1} and Lemma \ref{proposition_LP_1}, implies
$$
\norm{e^{-itH}u_0}_{L^2_TL^{\frac{2d}{d-2}}}\le C_T\norm{E_{1/2-1/m}u_0}_{L^2}.
$$
Finally, the assertion follows from an interpolation with the $L^\infty_TL^2$-estimate.
\end{proof}

\appendix

\section{The case with growing potentials}
Throughout this appendix we assume that $H=\frac12(D-A(x))^2+V(x)$ satisfies Assumption \ref{assumption_A} and \eqref{assumption_V}. We here prove Corollary \ref{remark_1} with a simpler proof than that of Theorem \ref{theorem_2}. The main point is the following square function estimates:

\begin{proposition}
\label{proposition_Appendix_1}
Consider a $4$-adic partition of unity on $[0,\infty)$:
\begin{align*}
f_0,f\in C_0^\infty(\R),\ \supp f\subset [1/4,4],\ 0\le f_0,f\le1,\ f_0(\lambda)+\sum_h f(h^2\lambda)=1,\ \lambda\ge0.
\end{align*}
Then, for any $1<q<\infty$, we have the square function estimates:
\begin{align}
\label{proposition_Appendix_1_1}
\norm{v}_{L^q}
\approx
\Big|\Big|\Big(|f_0(H)v|^2+\sum_h|f(h^2H)v|^2\Big)^{1/2}\Big|\Big|_{L^q}.
\end{align}
In particular, if $2\le q<\infty$ then 
\begin{align}
\label{proposition_Appendix_1_2}
\norm{v}_{L^q}
\lesssim
\Big(\norm{f_0(H)v}_{L^q}^2+\sum_{h}\norm{f(h^2H)v}_{L^q}^2\Big)^{1/2}.
\end{align}
Here, implicit constants are independent of $v$. 
\end{proposition}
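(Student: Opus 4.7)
The argument follows the same general lines as the Fourier-multiplier square function estimate in Lemma \ref{lemma_LP_3}, with Mikhlin's multiplier theorem replaced by a spectral multiplier theorem for $H$. By \eqref{assumption_V} and Remark \ref{remark_2}, one may assume $H\ge 1$ and that the full Weyl symbol $p$ of $H$ is globally elliptic in the sense $p(x,\xi)\approx |\xi|^2+\<x\>^m$. The key preliminary step is to establish that any $\varphi\in C^\infty([0,\infty))$ satisfying $|\lambda^k\partial^k_\lambda\varphi(\lambda)|\le C_k$ for every $k\ge 0$ gives rise to a bounded operator $\varphi(H)$ on $L^q$ for every $1<q<\infty$, with norm depending only on finitely many of the $C_k$ and on $q$. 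This can be carried out via the Helffer--Sj\"ostrand functional calculus combined with a semiclassical parametrix for $(H-z)^{-1}$ in the elliptic region, of the same type as in \cite[Lemma 2.4]{Yajima_Zhang_2}.

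With this spectral multiplier theorem in hand, the upper half of \eqref{proposition_Appendix_1_1} is obtained by the Rademacher trick used in Lemma \ref{lemma_LP_3}. Let $m_t(\lambda)=\sum_h r_h(t)f(h^2\lambda)$, where $r_h$ are Rademacher functions. Since $\supp f\subset[1/4,4]$, at most two terms in the sum are non-zero at any given $\lambda$, and a direct computation shows $|\lambda^k\partial^k_\lambda m_t(\lambda)|\le C_k$ uniformly in $t\in[0,1]$. The spectral multiplier theorem therefore yields $\norm{m_t(H)}_{L^q\to L^q}\le C_q$ uniformly in $t$. Khinchine's inequality then gives
$$
A_q^{-1}\norm{m_t(H)v}_{L^q_t([0,1])}\le\Bigl(\sum_h|f(h^2H)v|^2\Bigr)^{1/2}\le A_q\norm{m_t(H)v}_{L^q_t([0,1])},
$$
and taking the $L^q_x$-norm and applying Fubini produces $\bignorm{(\sum_h|f(h^2H)v|^2)^{1/2}}_{L^q}\lesssim \norm{v}_{L^q}$. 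Adding the trivial bound $\norm{f_0(H)v}_{L^q}\lesssim\norm{v}_{L^q}$ (another application of the spectral multiplier theorem) gives $\norm{Sv}_{L^q}\lesssim \norm{v}_{L^q}$, where $Sv$ denotes the square function on the right-hand side of \eqref{proposition_Appendix_1_1}.

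The reverse inequality is handled by duality. Choose slight enlargements $\wtilde f_0,\wtilde f$ with $\wtilde f_0\equiv 1$ on $\supp f_0$ and $\wtilde f\equiv 1$ on $\supp f$, so that $f_0(H)=\wtilde f_0(H)f_0(H)$ and $f(h^2H)=\wtilde f(h^2H)f(h^2H)$ by functional calculus. For $w\in L^{q'}$, self-adjointness of $H$ yields
$$
\<v,w\>=\<f_0(H)v,\wtilde f_0(H)w\>+\sum_h\<f(h^2H)v,\wtilde f(h^2H)w\>.
$$
Cauchy--Schwarz in the $h$-summation followed by H\"older in $x$ gives $|\<v,w\>|\le\norm{Sv}_{L^q}\norm{\wtilde Sw}_{L^{q'}}$, where $\wtilde Sw$ is defined analogously using $\wtilde f_0,\wtilde f$, and the upper bound already proved (applied to $\wtilde S$) controls $\norm{\wtilde Sw}_{L^{q'}}\lesssim\norm{w}_{L^{q'}}$. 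Taking the supremum over $\norm{w}_{L^{q'}}=1$ yields $\norm{v}_{L^q}\lesssim \norm{Sv}_{L^q}$, completing \eqref{proposition_Appendix_1_1}. Finally, \eqref{proposition_Appendix_1_2} follows from \eqref{proposition_Appendix_1_1} by Minkowski's inequality when $q\ge 2$.

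The main obstacle is the spectral multiplier theorem itself: one must construct a parametrix for $(H-z)^{-1}$ with $L^q\to L^q$ bounds uniform in $z$ and with sufficient polynomial decay in $|\Im z|^{-1}$ to be integrated against $\bar\partial\wtilde\varphi$ for $\varphi$ satisfying only Mikhlin-type estimates. This hinges precisely on the global ellipticity $p(x,\xi)\approx|\xi|^2+\<x\>^m$, and therefore on the lower bound \eqref{assumption_V}, which is exactly what fails in the general setting of Proposition \ref{proposition_LP_1} and explains why the main body of the paper is forced to work with a joint space-frequency decomposition instead.
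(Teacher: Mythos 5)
Your approach is correct in broad outline but takes a genuinely different route from the paper, and the key ingredient you rely on is left unestablished. The paper's proof is very short and black-box: since one can reduce to $V\ge 0$ (and $V\in L^1_{\mathrm{loc}}$), the heat kernel of $e^{-tH}$ satisfies the Gaussian upper bound
$|\partial_x^j e^{-tH}(x,y)|\le C_d t^{-(d+j)/2}e^{-c|x-y|^2/t}$ for $j=0,1$ (Simon), and a general Littlewood--Paley theorem of Zheng \cite{Zheng} then yields \eqref{proposition_Appendix_1_1} directly; \eqref{proposition_Appendix_1_2} follows from Minkowski. By contrast, you reproduce the Fourier-multiplier square function argument of Lemma \ref{lemma_LP_3} with Mikhlin's theorem replaced by an abstract spectral multiplier theorem for $H$. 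Your Rademacher step is correct --- $m_t(\lambda)=\sum_h r_h(t)f(h^2\lambda)$ does satisfy $|\lambda^k\partial_\lambda^k m_t(\lambda)|\le C_k$ uniformly in $t$ because at any $\lambda$ only boundedly many terms contribute --- and the duality step is also sound, assuming the upper bound is available for both $f$ and its enlargement $\wtilde f$.

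The genuine gap is the ``key preliminary step'' itself: you assert a Mikhlin-type spectral multiplier theorem for $H$ via Helffer--Sj\"ostrand plus a resolvent parametrix, and then apply it to $m_t$. The paper does construct a parametrix for $(h^2H-z)^{-1}$ (Proposition \ref{proposition_FC_1}), but only to treat the spectral cutoff $f(h^2H)$ for \emph{compactly supported} $f$, where the Helffer--Sj\"ostrand integral is over $\supp\wtilde f$ and converges without difficulty. For a multiplier like $m_t$ that is merely bounded and does not decay at infinity, the Helffer--Sj\"ostrand formula cannot be applied directly: the almost analytic extension does not have compact support, the resolvent parametrix needs $L^q\to L^q$ operator norm bounds with polynomial growth controlled in \emph{both} $|\Im z|^{-1}$ and $\<\Re z\>$, and one must show the resulting $\C$-integral converges and gives a bounded operator. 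This is precisely the content of a Hörmander--Mikhlin theorem for Schr\"odinger operators, which is itself a theorem (and, in the regime of Gaussian heat kernel bounds, is usually proved via Calder\'on--Zygmund/finite-speed-of-propagation machinery rather than parametrices --- i.e., essentially the toolkit the paper's citation of Zheng packages up). As written, this step is hand-waved, so the argument is incomplete unless you prove or carefully cite the required spectral multiplier theorem. Given that the Gaussian heat kernel bound is available under exactly the hypothesis \eqref{assumption_V}, the paper's route is both shorter and avoids these subtleties entirely.
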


\begin{proof}
Since $V\ge0$ and $V\in L^1_{loc}$, the heat kernel of the Schr\"odinger semigroup $e^{-tH}$ satisfies the upper Gaussian bound (cf. Simon \cite{Simon}):
$$
|\partial_x^j e^{-tH}(x,y)|\le C_d t^{-(d+j)/2}e^{-c|x-y|^2/t},\quad t>0,\ j=0,1.
$$
Then, a general theorem by Zheng \cite{Zheng} implies the square function estimates \eqref{proposition_Appendix_1_1}. \eqref{proposition_Appendix_1_2} is an immediate consequence of \eqref{proposition_Appendix_1_1} and Minkowski's inequality since $q\ge2$. 
\end{proof}

Thanks to the positivity of $V$, one can also prove an approximation theorem of the spectral multiplier $f(h^2H)$ in terms of $h$-$\Psi$DO:

\begin{proposition}			
\label{proposition_FC_1}
\emph{(1)} Let $f\in C_0^\infty(\R)$ with $\supp f\Subset (0,\infty)$. Then, for any $N\ge0$, there exists a bounded family $
\{\chi^h\}_{h\in(0,1]}\subset S(1,g)$ with $\supp \chi^h\subset\supp (f\circ p^h)$ such that
$$
\norm{f(h^2H)-\chi^h(x,hD)}_{L^2\to L^q} \le C_{qN}h^{N-d(1/2-1/q)},\quad 2\le q\le\infty,
$$
uniformly in $h\in(0,1]$. In particular, $f(h^2H)$ is bounded from $L^2$ to $L^q$ with the bounds:
$$
\norm{f(h^2H)}_{L^2\to L^q} \le C_{q}h^{-d(1/2-1/q)},\quad 2\le q\le\infty,\ h\in(0,1]. 
$$
\emph{(2)} Let $f_0\in C_0^\infty(\R)$. Then,  
$
\norm{f_0(H)}_{L^2\to W^{s,q}} \le C_{qs}
$ 
for any $2\le q\le\infty$ and $s\ge0$.
\end{proposition}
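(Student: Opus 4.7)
The plan for part (1) is to combine the Helffer--Sj\"ostrand formula with a semiclassical parametrix for the resolvent. Let $\tilde f\in C_0^\infty(\C)$ be an almost analytic extension of $f$, with $\tilde f|_\R=f$ and $|\bar\partial \tilde f(z)|\le C_N|\Im z|^N$ for every $N\ge0$, so that
$$
f(h^2H)=\frac{1}{\pi}\int_\C \bar\partial\tilde f(z)(z-h^2H)^{-1}\,dL(z).
$$
I would construct a bounded family of symbols $\{b^h(z;\cdot,\cdot)\}\subset S(1,g)$ (uniformly in $h\in(0,1]$ and $z\in\supp\tilde f$) by the standard iterative scheme: pick a cutoff $\tilde\psi^h\in S(1,g)$ equal to $1$ on a neighborhood of $\{(x,\xi):p^h(x,\xi)\in\supp f\}$, start from $b_0^h(z;x,\xi)=\tilde\psi^h(x,\xi)(z-p^h(x,\xi))^{-1}$, and use the composition formula \eqref{pdo_2} to produce $b^h\sim\sum_j h^j b^h_j(z)$ with $(z-H^h)\,b^h(z)(x,hD)=\Id+r^h(z;N)(x,hD)$ and $\|r^h(z;N)(x,hD)\|_{L^2\to L^2}=O(h^N|\Im z|^{-M(N)})$. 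Since $\|(z-h^2H)^{-1}\|_{L^2\to L^2}\le |\Im z|^{-1}$, the almost analytic decay of $\bar\partial\tilde f$ absorbs the negative powers of $|\Im z|$ after choosing $N$ large. Setting $\chi^h(x,\xi):=\frac{1}{\pi}\int_\C \bar\partial\tilde f(z)\,b^h(z;x,\xi)\,dL(z)\in S(1,g)$, one obtains, by construction of $\tilde\psi^h$, $\supp\chi^h\subset\supp(f\circ p^h)$.

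For the $L^2\to L^q$ bound I would exploit support properties: on $\supp\chi^h$, the ellipticity of $g^{jk}$ and the bound $p^h\in\supp f\Subset(0,\infty)$ force both $|\xi|\lesssim 1$ and $h\<x\>^{m/2}\lesssim 1$, so $\chi^h$ is compactly supported in $\xi$ uniformly in $h$, and \eqref{pdo_1} yields $\|\chi^h(x,hD)\|_{L^2\to L^q}\lesssim h^{-d(1/2-1/q)}$. The remainder $f(h^2H)-\chi^h(x,hD)$ is, for any fixed truncation order $N$, a finite sum of $h^N$-prefactored $h$-$\Psi$DOs with symbols in $S(\<x\>^{-N}\<\xi\>^{-N},g)$, whose $L^2\to L^q$ norms are again controlled by $h^{N-d(1/2-1/q)}$ via \eqref{pdo_1}; hence the claimed remainder bound. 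The second displayed estimate of (1) is then immediate from the approximation.

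For part (2), the hypothesis \eqref{assumption_V} allows one to assume $H\ge 1$ (Remark \ref{remark_2}), so choosing $\eta\in C_0^\infty((0,\infty))$ with $\eta\equiv 1$ on $[1,\infty)\cap\supp f_0$ one has $f_0(H)=g(H)$ with $g:=f_0\cdot\eta\in C_0^\infty((0,\infty))$. For $1<q<\infty$, Remark \ref{remark_2} gives
$$
\|f_0(H)u\|_{W^{s,q}}\lesssim \|H^{s/2}f_0(H)u\|_{L^q}+\|f_0(H)u\|_{L^q},
$$
and both right-hand terms are of the form $\|g_\sigma(H)u\|_{L^q}$ with $g_\sigma(\lambda)=\lambda^{\sigma}g(\lambda)\in C_0^\infty((0,\infty))$, whence part (1) with $h=1$ applies. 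For $q=\infty$ I would apply the Sobolev embedding $W^{s+t,2}\hookrightarrow W^{s,\infty}$ with $t>d/2$ to reduce to the $L^2\to L^2$-boundedness of $H^{(s+t)/2}f_0(H)=g_{(s+t)/2}(H)$, which follows directly from the spectral theorem.

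The main technical obstacle is the uniform-in-$z$ bookkeeping of the parametrix: one must track how the symbol seminorms of the iterates $b^h_j(z)$ deteriorate as $|\Im z|\to 0$ (picking up a fixed polynomial loss at each step), and verify that this deterioration is beaten by the vanishing $|\bar\partial\tilde f(z)|\lesssim |\Im z|^N$. This is the standard Helffer--Sj\"ostrand mechanism, but care is needed because $p^h$ is unbounded on $\R^{2d}$; the cutoff $\tilde\psi^h$ localizes the relevant computations to the region $\{p^h\in\supp f\}$ (modulo $O(h^\infty)$ errors), which is what makes the symbol-class estimates uniform in $h$.
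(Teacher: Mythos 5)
Your proposal follows the paper's general strategy---Helffer--Sj\"ostrand plus a semiclassical resolvent parametrix---but it deviates in a way that leaves a genuine gap. You introduce a cutoff $\tilde\psi^h$ and take $b_0^h = \tilde\psi^h/(z-p^h)$. With that choice, the parametrix you build satisfies
$(z-H^h)\,b^h(z)(x,hD) = \tilde\psi^h(x,hD) + O(h^N)$, not $\Id + O(h^N)$, so after integrating against $\bar\partial\tilde f$ you only get
$\chi^h(x,hD) \approx f(h^2H)\,\tilde\psi^h(x,hD)$. To conclude $f(h^2H)\approx\chi^h(x,hD)$ you still owe a microlocalization estimate $\|f(h^2H)\bigl(\Id-\tilde\psi^h(x,hD)\bigr)\|_{L^2\to L^q}=O(h^\infty)$, which is nontrivial and not ``automatic modulo $O(h^\infty)$'' as you assert. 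The paper sidesteps this entirely: because of \eqref{assumption_V} one has the uniform ellipticity $p^h(x,\xi)\approx|\xi|^2+h^2\<x\>^m$, and from this one gets directly that $(p^h-z)^{-1}$ is a \emph{global} symbol in $S(1,g)$ with seminorms polynomial in $|\Im z|^{-1}$; the resolvent parametrix then works on all of $\R^{2d}$ with no cutoff and no microlocalization step. Your proposal never invokes \eqref{assumption_V} or the ellipticity explicitly, even though it is the engine of the whole argument (indeed it would be needed anyway to prove the missing microlocalization).

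A second concrete issue: you describe the remainder $f(h^2H)-\chi^h(x,hD)$ as ``a finite sum of $h^N$-prefactored $h$-$\Psi$DOs with symbols in $S(\<x\>^{-N}\<\xi\>^{-N},g)$.'' That is not correct. The Helffer--Sj\"ostrand remainder is
\[
R_N^h = -\frac{h^N}{2\pi i}\int \partial_{\bar z}\tilde f(z)\, r_N^h(z,x,hD)\,(h^2H-z)^{-1}\,dz\wedge d\bar z,
\]
which contains the \emph{true} resolvent $(h^2H-z)^{-1}$ and is not a $\Psi$DO. The correct estimate factors this as $\|r_N^h(z,x,hD)\|_{L^2\to L^q}\lesssim h^{-d(1/2-1/q)}|\Im z|^{-n(N,q)}$ (via \eqref{pdo_1}, since $r_N^h\in S(\<x\>^{-N}\<\xi\>^{-N},g)$ with $N>d$) times $\|(h^2H-z)^{-1}\|_{L^2\to L^2}\le|\Im z|^{-1}$, and then absorbs the $|\Im z|$-blowup using the decay $|\partial_{\bar z}\tilde f|\lesssim|\Im z|^M$. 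This is what the paper does.

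Finally, your claim that $\supp\chi^h\subset\supp(f\circ p^h)$ ``by construction of $\tilde\psi^h$'' is imprecise: $\tilde\psi^h=1$ on a \emph{neighborhood} of $\{p^h\in\supp f\}$, so its own support is strictly larger. The correct reason (used in the paper) is that integration against $\bar\partial\tilde f$ produces, by the Cauchy formula, explicit factors $f^{(j+k)}\circ p^h$ in each $\chi_j^h$, which guarantees the claimed support. Your treatment of part (2) (norm equivalence from Remark \ref{remark_2} for $q<\infty$ and Sobolev embedding for $q=\infty$) is a valid and essentially equivalent route to the paper's terse ``same argument with $h=1$.''
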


\begin{proof}
The proof is essentially same as in the case when $A\equiv V\equiv0$ (see, \emph{e.g.}, \cite{Bouclet_Tzvetkov_1}). 
Thus we only outline the proof. 
By \eqref{assumption_V}, we have the following ellipticity:
\begin{align}
\label{proof_proposition_FC_1_1}
p^h(x,\xi)\approx |\xi|^2+h^2\<x\>^m,
\end{align}
where the implicit constants are independent of $h\in (0,1]$, which can be verified as follows: 
if $|\xi|^2\ge Ch^2\<x\>^m$ for sufficiently large $C>0$ then $p^h\ge |\xi|^2-h^2|A|^2+h^2V\gtrsim |\xi|^2+h^2\<x\>^m$; otherwise, by Assumption \ref{assumption_A} (1), $p^h\ge h^2V\gtrsim |\xi|^2+h^2\<x\>^m$ since $|\xi|^2\lesssim h^2\<x\>^m$. 
The upper bound is obvious. 
Using this bound and \eqref{symbols_2}, we see that
$$
\left|\dderiv{x}{\xi}{\alpha}{\beta}\left(\frac{1}{p^h(x,\xi)-z}\right)\right|\le C_{\alpha\beta}\<x\>^{-|\alpha|}\<\xi\>^{-|\beta|}|\Im z|^{-1-|\alpha+\beta|}, 
$$
uniformly in $x,\xi\in\R^d$ and $h\in(0,1]$, and locally uniformly in $z\in \C\setminus \R$. 
Then we can follow the standard argument (see, \emph{e.g.}, \cite{Robert2, BGT,Bouclet_Tzvetkov_1}) to construct the semiclassical approximation of the resolvent $(h^2H-z)^{-1}$ which has the following form:
\begin{align}
\label{proof_proposition_FC_1_2}
(h^2H-z)^{-1}=\sum_{0\le j \le N-1} h^jq_j^h(z,x,hD)+h^N r^h_N(z,x,hD)(h^2H-z)^{-1},
\end{align}
where $q_j^h\in S(\<x\>^{-j}\<\xi\>^{-j},g)$ are of the forms
$$
q_0^h(z,x,\xi)=\frac{1}{p^h(x,\xi)-z},\ q_j^h(z,x,\xi)=\sum_{0 \le k \le j} \frac{q_{jk}^h(x,\xi)}{(p^h(x,\xi)-z)^{1+j+k}},\ j\ge1,
$$ 
with $q_{jk}^h\in S(\<x\>^{-j}\<p^h(x,\xi)\>^{N_j(k)},g)$ with some integer $N_j(k)$. Moreover, the remainder $r_N^h$ belongs to $S(\<x\>^{-N}\<\xi\>^{-N},g)$ with the bounds 
$$
|\dderiv{x}{\xi}{\alpha}{\beta}r_{N}^h(z,x,\xi)|
\le C_{N\alpha\beta}
\<x\>^{-N-|\alpha|}\<\xi\>^{-N-|\beta|}|\Im z|^{-2N-1-|\alpha+\beta|},
$$
uniformly in $x,\xi\in\R^d$ and $h\in(0,1]$, and locally uniformly in $z\in \C\setminus \R$. In particular, if $N>d$ then $r_N^h(z,x,hD)$ is bounded from $L^2$ to $L^q$ with the bounds
\begin{align}
\label{proof_proposition_FC_1_3}
\norm{r_N^h(z,x,hD)}_{L^2 \to L^q}\lesssim h^{-d(1/2-1/q)}|\Im z|^{-n(N,q)}
\end{align}
for $q\in[2,\infty]$, $h\in(0,1]$ and $z\in \C\setminus\R$, where $n(N,q)$ is a positive number depending on $N$ and $q$. 

We now plug the approximation \eqref{proof_proposition_FC_1_2} into the well-known Helffer-Sj\"ostrand formula \cite{Helffer_Sjostrand}:
$$
f(h^2H)=-\frac{1}{2\pi i}\int_\C \frac{\partial\wtilde{f}}{\partial \overline{z}}(z)(h^2H-z)^{-1}dz\wedge d\overline{z},
$$ 
where $dz\wedge d\bar{z}=-2idudv$ with $z=u+iv$ and $\wtilde{f}$ is an almost analytic extension of $f$. Note that, since $f\in C_0^\infty(\R)$,  $\wtilde f$ is also compactly supported and, for any $M\ge0$, 
\begin{align}
\label{proof_proposition_FC_1_4}
|\partial_{\bar{z}}\wtilde{f}(z)|\le C_M|\Im z|^M
\end{align}
with some $C_M>0$. Let us set the symbol $\chi^h$ defined by 
$$
\chi^h=\sum_{j=0}^{N-1}h^j\chi_j^h\ \text{with}\ \chi_0^h=f\circ p^h,\ \chi_j^h=\sum_{k=0}^j\frac{(-1)^{j+k}}{(j+k)!}q_{jk}^h\cdot f^{(j+k)}\circ p^h,\ 1\le j\le N-1.
$$
Then, $\{\chi^h\}_{h\in(0,1]}$ is bounded in $S(1,g)$ and $\supp \chi^h\subset \supp f\circ p^h$. Moreover, taking $N>d$ we learn by \eqref{proof_proposition_FC_1_3} and \eqref{proof_proposition_FC_1_4} that the remainder
\begin{align*}
R^h_N&:=f(h^2H)-\chi^h(x,hD)\\
&=-\frac{h^N}{2\pi i}\int_{\supp \wtilde f}\frac{\partial \wtilde{f}}{\partial \bar{z}}(z)r_{N}^h(z,x,D)(h^2H-z)^{-1}dz\wedge d\bar{z}
\end{align*}
satisfies
\begin{align*}
\norm{R^h_N}_{L^2\to L^q}
&\le C_{Nq}h^{N-d(1/2-1/q)}\int_{\supp \wtilde f}|\Im z|^{M-n(N,q)-1}dz\wedge d\bar{z}\\
&\le C_{Nq}'h^{N-d(1/2-1/q)},
\end{align*}
provided that $M\ge n(N,q)+1$. We complete the proof for the high energy part. 

The assertion for the low energy part is also verified by the same argument as above with $h=1$. 
\end{proof}

\begin{remark}
Assume (for simplicity) that $A\equiv0$. It is easy to see that $[x,H]$ is bounded in $x$. Since $H$ is elliptic, we then learn by a standard commutator argument (see, \emph{e.g.}, \cite[Section 2]{Bouclet_Tzvetkov_1}) that there exists $N>0$ such that $(H-z)^{-N}$ is bounded on $L^q$ for any $q\in[1,\infty]$ with norm dominated by a power of $\<z\>|\Im z|^{-1}$. Therefore, using the same argument as that in \cite[Section 2]{Bouclet_Tzvetkov_1} one can prove in this case that $f(h^2H)$ is bounded from $L^{q'}$ to $L^q$ for any $1\le q'\le q\le\infty$ with the norm of order $h^{-d(1/{q'}-1/q)}$. However, since the $L^2 \to L^q$ boundedness is sufficient to study \emph{local-in-time} Strichartz estimates, we do not write here the precise statement.
\end{remark}

\begin{proof}[Proof of Corollary \ref{remark_1}]
Let $f$ be as in Lemma \ref{proposition_Appendix_1} and choose $F\in C_0^\infty(\R)$ so that $F\equiv1$ on $\supp f$ and $\supp F\Subset(0,\infty)$. We learn by Proposition \ref{proposition_FC_1} (1), Theorem \ref{theorem_PT_3} and the $TT^*$-argument that
$$
\norm{F(h^2H)e^{-itH}u_0}_{L^p_TL^q}\le C_T h^{	(1-2/m)/p}\norm{u_0}_{L^2}
$$
for any admissible pair $(p,q)$. Since $f(h^2H)=f(h^2H)F(h^2H)$ by the spectral decomposition theorem, the estimate \eqref{proposition_Appendix_1_2}, Proposition \ref{proposition_FC_1} (2) and the above estimates imply that
\begin{align*}
\norm{e^{-itH}u_0}_{L^p_TL^q}
&\le C_T \norm{u_0}_{L^2}
+C\Big(\sum_h\norm{e^{-itH}F(h^2H)f(h^2H)u_0}_{L^p_TL^q}^2\Big)^{1/2}\\
&\le C_T\norm{u_0}_{L^2}+C_T\Big(\sum_hh^{2(1-2/m)/p}\norm{f(h^2H)u_0}_{L^2}^2\Big)^{1/2}\\
&\le C_T\norm{\<H\>^{(1/2-1/m)/p}u_0}_{L^2},
\end{align*}
provided that $(p,q)$ is admissible and $q<\infty$. 
\end{proof}


\end{document}